\newcommand{\imagewidth}{0.75\textwidth}
\theoremstyle{plain}
\newtheorem{theorem}{Theorem}[section]
\newtheorem{proposition}[theorem]{Proposition}
\newtheorem{corollary}[theorem]{Corollary}
\newtheorem{lemma}[theorem]{Lemma}
\newtheorem{assumption}[theorem]{Assumption}
\newtheorem{remark}[theorem]{Remark}
\newcommand{\lb}{\left\{}
\newcommand{\rb}{\right\}}
\newcommand{\Def}{\overset{\text{def}}{=}}
\newcommand{\R}{\mathbb{R}}
\newcommand{\Z}{\mathbb{Z}}
\newcommand{\N}{\mathbb{N}}
\newcommand{\eps}{\varepsilon}
\newcommand{\bOne}{\mathbf{1}}
\newcommand{\filt}{\mathscr{F}}
\newcommand{\BP}{\mathbb{P}}
\newcommand{\BE}{\mathbb{E}}
\newcommand{\bdy}{\partial}
\newcommand{\rvW}{\mathsf{W}}
\newcommand{\kk}{\textsf{c}}
\NewDocumentCommand{\KK}{o}{%
  \IfNoValueTF{#1}
    {\textsf{C}} 
    {\textsf{C}_{\eqref{#1}}}
}
\newcommand{\genL}{\mathscr{L}}
\newcommand{\genP}{\mathscr{P}}
\newcommand{\tgenP}{\breve \genP}
\newcommand{\Res}{\mathcal{R}}
\newcommand{\Poly}{\mathcal{P}}
\newcommand{\Rplus}{\R_+}
\newcommand{\Rplusint}{\R_+^\circ}
\newcommand{\ub}{\underline{b}_{\eqref{E:basics}}}
\newcommand{\hypo}{\mathfrak{B}}
\newcommand{\Cspace}{C_b(\domint)\times C_b(\bdy_s)}
\newcommand{\TT}{\textsf{T}}
\newcommand{\cE}{\mathcal{E}}
\newcommand{\Index}{\mathcal{I}}
\newcommand{\degree}{\textsf{d}}
\newcommand{\SP}{x_\circ,y_\circ}
\newcommand{\Mono}[3]{\mathfrak{m}_{\SP}^{#1,#2,#3}}
\newcommand{\Monoid}{\mathfrak{M}}
\newcommand{\Algebra}{\mathfrak{A}_{\SP}}
\newcommand{\Vspace}{\mathfrak{V}_{\SP}}
\newcommand{\bnorm}{\beta}
\newcommand{\VV}{\mathcal{V}}
\newcommand{\XX}{\mathsf{X}}
\newcommand{\YY}{\mathsf{Y}}
\newcommand{\xx}{\mathsf{x}}
\newcommand{\yy}{\mathsf{y}}
\newcommand{\degspace}{\mathcal{D}}
\newcommand{\openstrip}{\mathcal{U}}
\newcommand{\dom}{\mathcal{D}}
\newcommand{\domint}{\dom^\circ}
\newcommand{\domplus}{\dom^+}
\newcommand{\RR}{\R^2}
\newcommand{\kernel}{\mathcal{K}}
\newcommand{\sidekernel}{\mathcal{K}^\bdy}
\newcommand{\kernelQ}{\mathcal{K}^Q}
\newcommand{\sidekernelQ}{\mathcal{K}^{Q,\bdy}}
\newcommand{\kernelbound}[1]{\overline{\mathcal{K}}^{({#1})}}
\newcommand{\kernelboundintegrable}{\overline{\overline{\mathcal{K}}}}
\newcommand{\sidekernelboundintegrable}{\overline{\overline{\mathcal{K}}}^\bdy}
\newcommand{\QCorrector}{\mathcal{Q}}
\newcommand{\Proj}{\Pi_{\SP}}
\title{Side Boundary potentials for a Kolmogorov-type PDE}
\author{Richard Sowers}
\address{University of Illinois\\
    Urbana, IL 61801}
\email{r-sowers@illinois.edu}
\date{\today}
\subjclass[2020]{35K20,35K65}
\keywords{Boundary value problem, hypoellipticity, parametrix,  boundary potential, parabolic PDE}
\thanks{This material is based upon work supported by the
National Science Foundation under Grant Number CMMI 1727785.  Part of this research was performed while the author was visiting the Institute for Pure and Applied Mathematics (IPAM), which is supported by the National Science Foundation (Grant No. DMS-1925919).  The author like to thank L.C.G. Rogers for pointing out the work of McKean. The author would like to thank H.N.Z. Matin for many useful discussions on applied models associated with this problem.}
\begin{document}
\begin{abstract} We solve a Kolmogorov-type hypoelliptic parabolic partial differential equation with a \lq\lq side" boundary condition (in the direction of the weak H\"ormander condition).  We construct an approximate boundary potential which captures the effect of the boundary condition.  Integrals against this approximate boundary potential have a novel jump discontinuity at the boundary which includes a measure discovered by McKean.  We introduce some polynomial corrections to this approximate boundary potential and then construct a boundary-domain Volterra equation to solve the original partial differential equation.  This Volterra integral equation is iteratively solved, and the bounds contain a periodic behavior resulting from the boundary effects.
\end{abstract}
\maketitle

\section{Introduction}

We are interested in the degenerate parabolic boundary value problem
\begin{equation}\label{E:MainPDE} \begin{aligned}\frac{\partial u}{\partial t}(t,x,y)&= \frac12 \frac{\partial^2 u}{\partial y^2}(t,x,y)+b_1(x,y)\frac{\partial u}{\partial x}(t,x,y)+b_2(x,y)\frac{\partial u}{\partial y}(t,x,y)\\
&\qquad +c(x,y)u(t,x,y)+f(t,x,y) \qquad t>0,\, 0<x<\infty,\, -\infty<y<\infty \\
u(0,x,y)&=u_\circ(x,y)\qquad 0<x<\infty,\, -\infty<y<\infty  \\
u(t,0,y)&=u_\bdy(t,y).\qquad  t>0,\, -\infty<y<\infty \end{aligned}\end{equation}
We assume that the coefficient functions $b_1$, $b_2$ and $c$, the forcing function $f$, and the boundary and initial value functions $u_\bdy$ and $u_\circ$ are regular (more precise assumptions are in Section \ref{S:Hypotheses}).  The degeneracy in \eqref{E:MainPDE} comes from the lack of a $\sfrac{\partial^2}{\partial x^2}$ term; the partial differential equation (PDE) \eqref{E:MainPDE} may thus fail to have a unique smooth solution.  Let's firstly assume that
\begin{equation}\label{E:negative} b_1<0\end{equation}
at the boundary (see Assumption \ref{A:basics}),
which implies that boundary is accessible (See Remark \ref{R:complexboundaryconditions}).  In the interest of regularity, we also assume a \emph{H\"ormander} condition, that
\begin{equation}\label{E:parahor} \frac{\partial b_1}{\partial y}>0\end{equation}
(see Assumption \ref{A:basics}).  An example of $b_1$ satisfying the combined requirements of \eqref{E:negative} and \eqref{E:parahor} (i.e., strictly negative but positive $y$-derivative) is the shifted hyberbolic tangent function 
\begin{equation}\label{E:tanh} (x,y)\mapsto -2+\tanh(y).\end{equation}
An allowable collection of generalizations is given in Section \ref{S:Hypotheses} (viz. \eqref{E:tanh_pert}).  Some of the geometry of our problem is captured in Figure \ref{fig:region}.  We have a homogeneous diffusion in the $y$ variable, and a shear negative drift in the $x$ variable.


\begin{figure}[ht] \includegraphics[width=\imagewidth]{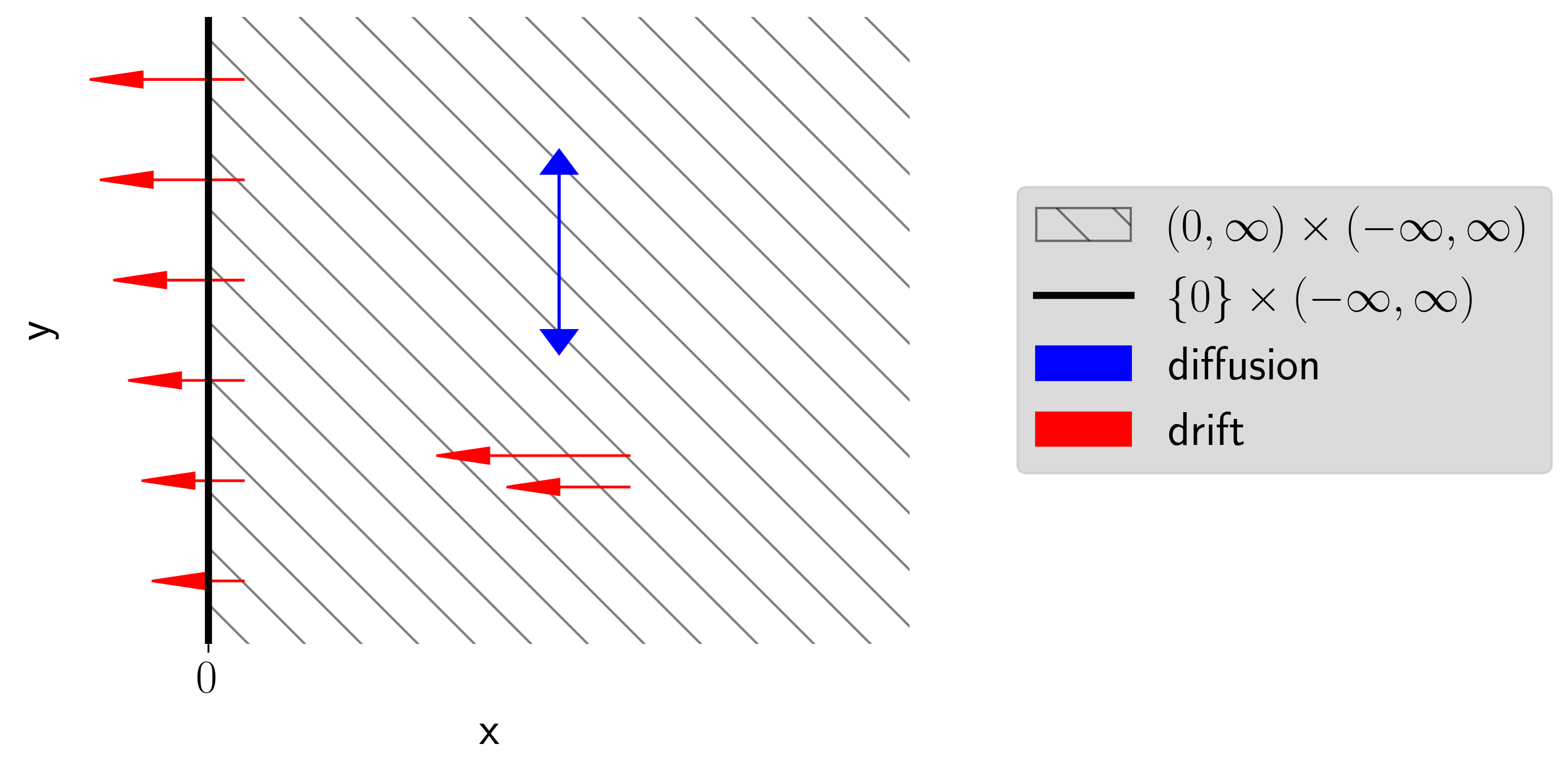}\caption{Geometry of Problem}\label{fig:region}
\end{figure}

The H\"ormander assumption \eqref{E:parahor} ensures \emph{hypoellipticity} \cite{hormander1967hypoelliptic} and thus that the solution of \eqref{E:MainPDE}, if it exists, must be smooth at interior points (assuming that $f$ is smooth).  We in particular want to understand the precise effect of the side Dirichlet boundary data $u_\bdy$.  Although simply posed, this problem seems to have been relatively unexplored (but see \cite{MR1196094,MR499684,MR0393787,MR560317,barros1973regular,garofalo2018estimates}).

\subsection{Solutions of Parabolic Equations}
The classical \cite{friedman2008partial} approach to an analogous nondegenerate boundary-value problem (which would include a $\sfrac{\partial^2}{\partial x^2}$ term) would be based on several approximate solutions.  A \lq\lq parametrix" (see references in \cite{kozhina2018parametrix}) would approximately and locally solve the nondegenerate version of \eqref{E:MainPDE} near a point in a larger spatially boundaryless domain and a \lq\lq boundary potential" (a double-layer potential for Dirichlet conditions) would approximately and locally solve the nondegenerate version of \eqref{E:MainPDE} near a point at the boundary.  One then constructs variations-of-parameters equations to write the nondegenerate version of \eqref{E:MainPDE} as integral equations against the parametrix and boundary potential.  Both the parametrix and boundary potential have Dirac-like singularities, leading to Volterra-type equations which allow one to iteratively solve for the variations of parameters representation. Often, the iteration for the parametrix in the spatially boundaryless domain is first completed (to give an exact \lq\lq fundamental solution"), and the result is then used as the basis for the boundary potential calculations.

 We would like to develop this variation of parameters approach for the degenerate PDE of \eqref{E:MainPDE}.  Lacking a term second derivative term $\sfrac{\partial^2}{\partial x^2}$ transversal to the side boundary, however, the standard double-layer potential does not work as a boundary potential.  We show that an appropriately scaled \emph{single-layer potential} is in fact the correct way to include the boundary data $u_\bdy$ in \eqref{E:MainPDE}.  Additionally, the parametrix has a more complex singularity at the origin than its nondegenerate counterpart (see Section \ref{S:Kernel}) since regularity in $x$ comes from hypoellipticity.  
 Fundamental solutions of parabolic equations often display geometric behavior in various asymptotic regimes corresponding to \lq\lq small diffusion" (or \lq\lq short-time")  \cite{aronson1959fundamental}.   In these regimes, fundamental solutions often look Gaussian, and give quantitative bounds on how singularities propagate.  For nondegenerate parabolic equations, these asymptotics can be written in terms of Riemannian geometry \cite{molchanov1975diffusion}, and for hypoelliptic parabolic equations, in terms of subRiemannian geometry \cite{bailleul2021small,barilari2017heat,brandolini2010non,kaplan1980fundamental}.  These asymptotics are are often closely connected to Varadhan-type \cite{varadhan1967behavior} large deviations estimates of a diffusion moving from one point to another in small time \cite{arous2019rare,bailleul2016large,bailleul2022diffusion,malliavin1996short,pagliarani2023yosida,pigato2018tube}, which in turn connect to control theory \cite{zelikin2004control}.  Our interest here is to give precise pre-exponential structure of the effect of the side boundary condition (cf. \cite{barilari2017curvature,habermann2018small,habermann2019small}).

\subsection{Hypoellipticity}
To understand the condition \eqref{E:parahor}, let's define vector fields
\begin{equation}\label{E:VecFieldDef} V_0(x,y)\Def b_1(x,y)\tfrac{\partial}{\partial x}+b_2(x,y)\tfrac{\partial}{\partial y} \qquad \text{and}\qquad 
V_1(x,y)\Def \tfrac{\partial}{\partial y} \end{equation}
for all $(x,y)$ in the 2-dimensional plane.  We can then write the partial differential operator in \eqref{E:MainPDE} as 
\begin{equation}\label{E:PDEandV} \tfrac12 \tfrac{\partial}{\partial y^2} + b_1\tfrac{\partial}{\partial x}+b_2\tfrac{\partial}{\partial y} =\tfrac12 V_1^2 + V_0. \end{equation}
The Lie bracket between $V_0$ and $V_1$ is
\begin{equation*} [V_0,V_1] = -\tfrac{\partial b_1}{\partial y}\tfrac{\partial}{\partial x}-\tfrac{\partial b_2}{\partial y}\tfrac{\partial}{\partial y}; \end{equation*}
the requirement \eqref{E:parahor} thus implies the classical weak parabolic H\"ormander \cite{hormander1967hypoelliptic} condition that
\begin{equation*} \textrm{Span}\lb V_1(x,y),[V_0,V_1](x,y)\rb \end{equation*}
has full rank (i.e., 2-dimensional); see \cite{bramanti2014invitation, bramanti2022hormander}.

Historically, some of the interest in and properties of \eqref{E:MainPDE} in the interior $(t,x,y)\in (0,\infty)\times (0,\infty)\times (-\infty,\infty)$ stemmed from its connection with stochastic processes and the \lq\lq Kolmogorov" diffusion \cite{anceschi2019survey, kolmogoroff1934zufallige}.  Consider the 2-dimensional stochastic system
 \begin{equation} \label{E:KolmogorovSDE} dX_t=Y_tdt \qquad \text{and}\qquad 
dY_t =  d\rvW_t \end{equation}
for $t>0$, where $\rvW$ is a standard Brownian motion defined on some underlying probability triple $(\Omega,\filt,\BP)$.  This represents the dynamics of a particle (of unit mass) driven by a white noise force\footnote{The work of \cite{nick2022near} motivated this effort.}.
Defining the vector fields $V_{1,K} = \sfrac{\partial}{\partial y}$ and $V_{0,K} = y\sfrac{\partial}{\partial x}$ 
(set $b_1(x,y)=y$ and $b_2\equiv 0$ in \eqref{E:VecFieldDef})
we have
\begin{equation}\label{E:KBracket} [V_{0,K},V_{1,K}]=-\tfrac{\partial}{\partial x}; \end{equation}
and then
\begin{equation*} \frac12 V_{1,K}^2+V_{0,K}=\tfrac12\tfrac{\partial^2}{\partial y^2}+y\tfrac{\partial}{\partial x}. \end{equation*}
This simplifies \eqref{E:VecFieldDef}, \eqref{E:PDEandV}, and \eqref{E:KBracket} to a \lq\lq minimal" case, but one in which the global negativity requirement \eqref{E:negative} doesn't hold (the function $(x,y)\mapsto y$ is not bounded from above away from zero).

For any fixed initial condition $(X_0,Y_0)=(x,y)$, we can explicitly solve \eqref{E:KolmogorovSDE}
as
\begin{equation} \label{E:processes} X^{(x,y)}_t= x+\int_{s=0}^t \{y+\rvW_s\}ds = x+y t + \int_{s=0}^t \rvW_sds \qquad \text{and}\qquad 
Y^{(x,y)}_t = y + \rvW_t\end{equation}
for $t\ge 0$.  The statistics of $(X^{(x,y)}_t,Y^{(x,y)}_t)$ are thus jointly Gaussian.  Using the calculations
\begin{equation} \label{E:covarcalc}\begin{aligned}
    \BE\left[W_t\int_{s=0}^t W_s\right]&= \int_{s=0}^t \BE[\rvW_t\rvW_s]ds = \int_{s=0}^t (t\wedge s) ds = \int_{s=0}^t s ds = \tfrac12 t^2 \\
    \BE\left[\lb \int_{s=0}^t \rvW_sds\rb^2\right]
    &= \int_{s_1=0}^t \int_{s_2=0}^t \BE\left[\rvW_{s_1}\rvW_{s_2}\right]ds_1\, ds_2 = \int_{s_1=0}^t \int_{s_2=0}^t (s_1\wedge s_2)ds_1\, ds_2 \\
    & = 2\int_{s_1=0}^t \lb \int_{s_2=0}^{s_1}s_2 ds_2\rb ds_1 = \int_{s_1=0}^t s_1^2 ds_1 = \tfrac13 t^3, \end{aligned}\end{equation}
we can see that $(X^{(x,y)}_t,Y^{(x,y)}_t)$ has mean vector and covariance matrix
\begin{equation}\label{E:statistics} \begin{pmatrix} x+yt \\ y \end{pmatrix} \qquad \text{and}\qquad \begin{pmatrix} \tfrac13 t^3 & \tfrac12 t^2 \\ \tfrac12 t^2 & t \end{pmatrix} \qquad t>0\end{equation}
Checking that the covariance matrix is invertible (it has determinant $\sfrac{t^4}{12}$), the vector $(X^{(x,y)}_t,Y^{(x,y)}_t)$ has a smooth density (one of the singular successes of \emph{Malliavin calculus} \cite{MR517250,malliavin1978stochastic} is that a smooth density exists when the dynamics of $(X^{(x,y)},Y^{(x,y)})$ are nonlinear but still hypoelliptic; see \cite{bell2012malliavin,bismut1981martingales,kusuoka1987applications,kusuoka2006hormander,kusuoka1984applications,kusuoka1985applications,norris1986simplified,stroock1981malliavin}).

\begin{remark}\label{R:complexboundaryconditions}
At least formally, we can solve the PDE
\begin{equation} \label{E:KPDE} \begin{aligned} \frac{\partial u_K}{\partial t}(t,x,y)&=\frac12 \frac{\partial^2 u_K}{\partial y^2}(t,x,y)+y\frac{\partial u_K}{\partial x}(t,x,y) \qquad (t,x,y)\in (0,\infty)\times (0,\infty)\times (-\infty,\infty) \\
u_K(0,x,y)&= u_\circ(x,y) \qquad (x,y)\in (0,\infty)\times (-\infty,\infty) \\
u_K(t,0,y)&= u_\bdy(t,y) \qquad (t,y)\in (0,\infty)\times (-\infty,\infty)
\end{aligned}\end{equation}
by using \eqref{E:processes}.  For $(x,y)\in (0,\infty)\times (-\infty,\infty)$, define
\begin{equation*} \tau^{(x,y)}\Def \inf\lb t>0: X^{(x,y)}_t<0 \rb; \end{equation*}
we should then have the Feynman-Kac representation
\begin{equation}\label{E:exitstatistics} u(t,x,y)=\BE\left[u_\bdy\left(t- \tau^{(x,y)},Y^{(x,y)}_{\tau^{(x,y)}}\right)\bOne_{\{\tau^{(x,y)}<t\}}\right]+\BE\left[u_\circ\left(X^{(x,y)}_t,Y^{(x,y)}_t\right)\bOne_{\{\tau^{(x,y)}\ge t\}}\right] \end{equation}
for all $(t,x,y)\in (0,\infty)\times (0,\infty)\times (-\infty,\infty)$.

The representation \eqref{E:exitstatistics} for the solution of \eqref{E:KPDE} motivates \eqref{E:negative}.  The system \eqref{E:processes} has diffusion only in the direction parallel to the boundary \textup{(}the y-direction\textup{)}.  The trajectories of \eqref{E:processes} can only hit the boundary via drift, and only when the drift is non-positive \textup{(}see also \cite{kogoj2017dirichlet}\textup{)}.  The requirement \eqref{E:negative} \textup{(}and more precisely Assumption \ref{A:basics}\textup{)} ensures that this drift is uniformly negative in our problem.  This is then used in the construction of the bounding function 
 \eqref{E:sidekernelboundintegrableDef} and then in Proposition \ref{P:sideintegrablekernel}.

Some classic results by McKean \cite{mckean1962winding} give information about the joint statistics of $\tau^{(0,1)}$ and $Y^{(x,y)}_{\tau^{(0,1)}}$.  See Remark \ref{R:McKean} for further information.
\end{remark}

Our main result is Theorem \ref{T:Main}, which gives a solution to \eqref{E:MainPDE} in terms of a variation of parameters formula \eqref{E:VariationOfConstants}.  Our main contribution is the kernel $\sidekernelQ$ (of \eqref{E:siderkernelQDef}) in \eqref{E:VariationOfConstants}, which captures propagation of the boundary data $u_\bdy$ into the interior of the domain of \eqref{E:MainPDE}. 

Although hypoelliptic parabolic equations have been fairly thoroughly studied, our efforts seem to fill a lacuna in the literature.  An abstract theory \cite{AIF_1969__19_1_277_0, kogoj2017dirichlet} ensures the existence and uniqueness of solutions to \eqref{E:MainPDE}.  Our interest is a framework allowing precise asymptotics.  The nonlinearities in \eqref{E:MainPDE} are in the drift terms; \cite{manfredini1997dirichlet} considers a similar problem, but with linear drift (an extension of \eqref{E:KPDE}) but a spatially varying diffusion term; the results of \cite{manfredini1997dirichlet} are existence and uniqueness via functional-analytic tools.  Fundamental solutions of strongly hypoelliptic parabolic PDE's have been well-studied \cite{bailleul2021small,barilari2017heat,brandolini2010non}, but our problem is weakly hypoelliptic (even in these strongly hypoelliptic cases, the precise effect of boundary conditions seems not to be well-understood).

\section{Assumptions and Notation }\label{S:Hypotheses}

\subsection{Spaces and standard notation}

As usual, $\R\Def (-\infty,\infty)$, and $\Rplus\Def [0,\infty)$.  Then $\Rplus$ has interior $\Rplusint \Def (0,\infty)$.

Let's fix a $\TT>0$ and define the parabolic domain 
\begin{equation*} \dom \Def [0,\TT)\times \Rplus\times \R \end{equation*}
of our problem.
We endow $\dom$ with its standard topology (inherited from $\R^3$).  The first equality of \eqref{E:MainPDE} holds on the interior $\domint = (0,\TT)\times \Rplusint \times \R$ of $\dom$.
The relative boundary of $\dom$ is then the disjoint union
\begin{equation*} \bdy \dom = \dom\setminus \domint = \left((0,\TT)\times \{0\}\times \R\right) \cup \left(\{0\}\times \Rplusint\times \R\right) \cup \left(\{0\}\times \{0\}\times \R\right),\end{equation*}
and
\begin{align*}
(0,\TT)\times \{0\}\times \R&\sim \bdy_s\Def (0,\TT)\times \R  \\
\{0\}\times \Rplusint\times \R&\sim \bdy_i\Def \Rplusint\times \R \\
\{0\}\times \{0\}\times \R&\sim \R;\end{align*}
with $\bdy_s$ being isomorphic to the \lq\lq side" boundary and $\bdy_i$ being isomorphic to the \lq\lq initial" boundary.
Let's also define
\begin{equation*} \domplus\Def \{(t,x,y)\in \dom: t>0\}= (0,\TT)\times \Rplus\times \R \end{equation*}
as the points in $\dom$ with positive time; heat kernels typically have a singularity at $t=0$.

Several spatial arguments will be simpler to understand when written in terms of the boundaryless plane $\RR\Def \R\times \R$ (as opposed to $\partial_i$).  Define also the open strip
\begin{equation*} \openstrip\Def (0,\TT)\times \R\times \R; \end{equation*}
then $\domplus\subset \openstrip$.

As usual, if $S$ is a topological space, $C_b(S)$ is the collection of bounded and continuous functions on $S$, and $\|\cdot \|$ will be the associated supremum norm on $C_b(S)$.

Finally, we will frequently use
indices in the set 
\begin{equation}\label{E:NDef} \N\Def \{0,1,2\dots\}. \end{equation}

\subsection{Regularity}

\begin{assumption}
We assume that $u_\bdy\in C_b(\bdy_s)$ and $u_\circ\in C_b(\bdy_i)$, and $f\in C_b(\domint)$.\end{assumption}

\begin{assumption}\label{A:boundedness} We assume that $b_1$, $b_2$, and $c$ are in $C_b^\infty(\RR)$ \textup{(}i.e., infinitely differentiable with all derivatives being bounded\textup{)}.  We assume that there is a $\KK[E:boundedness]>0$ such that
\begin{equation} \label{E:boundedness} \sup_{\substack{(x,y)\in \RR \\ (i,j)\in \N^2 \\ i+j\le 3}}\left|\frac{\partial^{i+j} b_1}{\partial x^i \partial y^j}(x,y)\right|\le \KK[E:boundedness], \quad \sup_{\substack{(x,y)\in \RR \\ (i,j)\in \N^2 \\ i+j\le 1}}\left|\frac{\partial^{i+j} b_2}{\partial x^i \partial y^j}(x,y)\right|\le \KK[E:boundedness], \quad \text{and}\quad \sup_{(x,y)\in \RR}\left|c(x,y)\right|\le \KK[E:boundedness]. \end{equation}
\end{assumption}
\noindent We will need one derivative of $b_1$ in \eqref{E:hypodef}, \eqref{E:heart}, and the proof of Proposition \ref{P:dirac}, two derivatives of $b_1$ in \eqref{E:VH} and \eqref{E:tgenPLstuff}, and three derivatives of $b_1$ in \eqref{E:XiPartsDef}.  We will also need one derivative of $b_2$ in \eqref{E:XiPartsDef}.

Let's next formalize \eqref{E:negative} and \eqref{E:parahor}.  Define
\begin{equation}\label{E:hypodef} \hypo(x,y)\Def \tfrac{\partial b_1}{\partial y}(x,y). \qquad (x,y)\in \RR \end{equation}
From Assumption \ref{A:boundedness}, we have that
\begin{equation}\label{E:hypoisbounded} |\hypo(x,y)|\le \KK[E:boundedness] \end{equation}
for all $(x,y)\in \RR$.
\begin{assumption}\label{A:basics} We require that 
\begin{equation} \label{E:basics} \ub \Def \inf_{y\in \R}\{-b_1(0,y)\} \end{equation}
be positive \textup{(}i.e., $b_1(0,y)\le -\ub<0$ if $y\in \R$\textup{)}.  We also require that
$\hypo(x,y)>0$ for all $(x,y)\in \RR$. \end{assumption}
\noindent For each $x\in \R$, we have (keeping in mind the assumption that $\hypo>0$)
\begin{equation} \label{E:hypotozero} 0 = \lim_{|y|\nearrow \infty}\frac{\KK[E:boundedness]}{|y|}
\ge \varliminf_{|y|\nearrow \infty}\frac{|b_1(x,y)|}{|y|}
=\varliminf_{|y|\nearrow \infty}\frac{\left|b_1(x,0)+\int_{y'=0}^y \hypo(x,y')dy'\right|}{|y|}
\ge \varliminf_{|y|\nearrow \infty}|\hypo(x,y)|; \end{equation}
our calculations must allow for globally small values of $\hypo$.
The function \eqref{E:tanh} satisfies both Assumptions \ref{A:boundedness} and \ref{A:basics} and has global behavior which is sufficiently regular for our calculations. Another example of $b_1$ which we might take is:
\begin{equation}\label{E:tanh_pert} (x,y)\mapsto  -2+\tanh(y)+\delta \Psi(x,y) \end{equation}
where $\Psi$ is infinitely smooth and with compact support and where $\delta$ is sufficiently small.

\begin{remark}
We could just as well assume that $\hypo$ be negative; a reflection $y\mapsto -y$ would then bring us back to Assumption \ref{A:basics}.  We have chosen to take $\hypo$ to be positive to more easily benefit from comparisons to \eqref{E:KPDE}.
\end{remark}

The quantity $\hypo$ will act as a prefactor (see \eqref{E:approxSDE}) of the interaction like $Y_t dt$ of \eqref{E:KolmogorovSDE}.  It thus is part of a covariance matrix (see \eqref{E:thesestatistics}) which generalizes the covariance matrix of \eqref{E:statistics}.  In light of \eqref{E:hypotozero}, this covariance matrix must globally have small points.  We shall assume that $\hypo$ controls a number of relevant derivatives of $b_1$, implying that the regularity of $b_1$ is locally of the same order as $\hypo$, particularly where $\hypo$ is small.
\begin{assumption}\label{A:hypobound}  We assume that there is a $\KK[E:hypobound]>0$ such that
\begin{equation}\label{E:hypobound} \left|\frac{\partial^{i+j} b_1}{\partial x^i \partial y^j}(x,y)\right|\le \KK[E:hypobound]|\hypo(x,y)| \end{equation}
for all $(x,y)\in \RR$ and all $(i,j)\in \N^2$ with $i+j\le 3$.
\end{assumption}
\noindent Since $\hypo$ is required to be positive (Assumption \ref{A:basics}), the functions
\begin{equation*} \bnorm_{i,j}(x,y)\Def \frac{\frac{\partial^{i+j} b_1}{\partial x^i\partial y^j}(x,y)}{\hypo(x,y)} \qquad (x,y)\in \RR \end{equation*}
are well-defined for $(x,y)\in \RR$ and $(i,j)\in \N^2$ such that $i+j\le 3$ and furthermore
\begin{equation}\label{E:betabounds} \sup_{(x,y)\in \RR}\left|\bnorm_{i,j}(x,y)\right|\le \KK[E:hypobound]. \end{equation}
for $(x,y)\in \RR$ and $(i,j)\in \N^2$ such that $i+j\le 3$.
We note that by definition \eqref{E:hypodef}, $\bnorm_{0,1}(x,y)=1$
for all $(x,y)\in \RR$.

Both \eqref{E:tanh} and \eqref{E:tanh_pert} decay to an exponential in $y$ for $y$ large, and their derivatives are again exponential. 
 Both \eqref{E:tanh} and \eqref{E:tanh_pert} thus satisfy Assumption \ref{A:hypobound}.  However, a function $b_1$ of the type
\begin{equation}\label{E:GaussianB} (x,y)\mapsto -2+\int_{s=-\infty}^y \tfrac{1}{\sqrt{2\pi}}\exp\left[-\tfrac12 s^2\right]ds \end{equation}
(i.e., a Gaussian cumulative) would \emph{not} satisfy Assumption \ref{A:hypobound}. The $y$-derivative of \eqref{E:GaussianB} is a Gaussian density; $\sfrac{\partial \ln \hypo}{\partial y}$ would in that case grow linearly (and not be bounded).

\subsection{Bounds}
Assumption \ref{A:hypobound} implies some quantitative bounds.  

Let's firstly show that Assumption \ref{A:hypobound} controls how small $\hypo$ can become in a neighborhood.  For $(x,y)\in \RR$,
\begin{align*} \frac{\partial \ln \hypo}{\partial x}(x,y) &= \frac{\frac{\partial \hypo}{\partial x}(x,y)}{\hypo(x,y)} = \frac{\frac{\partial^2 b_1}{\partial x \partial y}(x,y)}{\hypo(x,y)} = \bnorm_{1,1}(x,y)\\
\frac{\partial \ln \hypo}{\partial y}(x,y) &= \frac{\frac{\partial \hypo}{\partial y}(x,y)}{\hypo(x,y)} = \frac{\frac{\partial^2 b_1}{\partial y^2}(x,y)}{\hypo(x,y)} = \bnorm_{0,2}(x,y). \end{align*}
For any \emph{base} and \emph{evaluation} points $(x_b.y_b)$ and $(x_e,y_e)$ in $\RR$,
\begin{equation} \label{E:VH} \begin{aligned} \left|\ln \hypo(x_e,y_e)-\ln \hypo(x_b,y_b)\right|
&= \left|\int_{s=0}^1\lb (x_e-x_b)\frac{\partial \ln \hypo}{\partial x}(x_b+s(x_e-x_b),y_b+s(y_e-y_b)) \right.\right.\\
&\qquad \qquad \left.\left. + (y_e-y_b)\frac{\partial \ln \hypo}{\partial y}(x_b+s(x_e-x_b),y_b+s(y_e-y_b))\rb ds\right|\\
&= \left|\int_{s=0}^1\lb (x_e-x_b)\bnorm_{1,1}(x_b+s(x_e-x_b),y_b+s(y_e-y_b)) \right.\right.\\
&\qquad \qquad \left.\left. + (y_e-y_b)\bnorm_{0,2}(x+s(x_e-x_b),y_b+s(y_e-y_b))\rb ds\right|\\
&\le \KK[E:hypobound]\lb |x_e-x_b|+|y_e-y_b|\rb;
\end{aligned}\end{equation}
thus
\begin{equation}\label{E:hypoboundsandwich} \exp\left[-\KK[E:hypobound]\lb |x_e-x_b|+|y_e-y_b|\rb\right]\le \frac{|\hypo(x_e,y_e)|}{|\hypo(x_b,y_b)|}\le \exp\left[\KK[E:hypobound]\lb |x_e-x_b|+|y_e-y_b|\rb\right].
\end{equation}

Let's next construct some bounds on Taylor remainders, in particular using $\hypo$ to bound the remainders of $b_1$.  Fix $g\in C^\infty(\RR)$, a positive integer $d$, and base and evaluation points $(x_b,y_b)$ and $(x_e,y_e)$ in $\RR$.  Define 
\begin{equation} \label{E:TaylorRemainder}\begin{aligned} \left(\Res^{(d)}_{(x_b.y_b)}g\right)(x_e,y_e)&\Def \int_{s=0}^1\frac{(1-s)^{d-1}}{(d-1)!}\lb \sum_{d'=0}^d (x_e-x_b)^{d'}(y_e-y_b)^{d-d'}\binom{d}{d'}\right.\\
&\qquad \left. \times \frac{\partial^d g}{\partial x^{d'} \partial y^{d-d'}}\left(x_b+s(x_e-x_b),y_b+s(y_e-y_b)\right)\rb ds \end{aligned}\end{equation}
so that the Taylor expansion of $g$ with degree $d-1$ is
\begin{equation*} g(x,y) =\sum_{\substack{(i,j)\in \N^2 \\ i+j\le d-1}}\frac{1}{i!j!}\frac{\partial^{i+j}g}{\partial x^i \partial y^j}(x_b,y_b)(x_e-x_b)^i(y_e-y_b)^j + \left(\Res^{(d)}_{(x_b,y_b)}g\right)(x_e,y_e). \end{equation*}
A natural bound on $\Res^{(d)}$ is
\begin{align*} &\left|\left(\Res^{(d)}_{(x_b,y_b)}g\right)(x_e,y_e)\right| \\
&\qquad \le \frac{1}{d!}\lb \sup_{\substack{0\le s\le 1 \\ 0\le d'\le d}}\left|\frac{\partial^d g}{\partial x^{d'}\partial y^{d-d'}}\left(x_b+s(x_e-x_b),y_b+s(y_e-y_b)\right)\right|\rb\sum_{d'=0}^d \binom{d}{d'}|x_e-x_b|^{d'}|y_e-y_b|^{d-d'}\\
&\qquad =\frac{1}{d!}\lb \sup_{\substack{0\le s\le 1 \\ 0\le d'\le d}}\left|\frac{\partial^d g}{\partial x^{d'}\partial y^{d-d'}}\left(x_b+s(x_e-x_b),y_b+s(y_e-y_b)\right)\right|\rb\lb |x_e-x_b|+|y_e-y_b|\rb^d. \end{align*}
From Assumption \ref{A:boundedness}, we have that
\begin{equation} \label{E:Tbounds} \left|\left(\Res^{(1)}_{(x_b,y_b)}b_2\right)(x_e,y_e)\right|\le \KK[E:boundedness]\lb |x_e-x_b|+|y_e-y_b|\rb.  \end{equation}
Using \eqref{E:betabounds} and then \eqref{E:hypoboundsandwich}, we also have that
\begin{align*} &\left|\left(\Res^{(d)}_{(x_b,y_b)}b_1\right)(x_e,y_e)\right|\\
&\qquad \le  \tfrac{1}{d!}\KK[E:hypobound]\lb \sup_{\substack{ 0\le s\le 1 \\ 0\le d'\le d}}\left|\frac{\partial^d b_1}{\partial x^{d'}\partial y^{d-d'}}\left(x_b+s(x_e-x_b),y_b+s(y_e-y_b)\right)\right|\rb \lb |x_e-x_b|+|y_e-y_b|\rb^d \\
&\qquad \le  \tfrac{1}{d!}\KK[E:hypobound]\lb \sup_{0\le s\le 1}\hypo\left(x_b+s(x_e-x_b),y_b+s(y_e-y_b)\right)\rb\lb |x_e-x_b|+|y_e-y_b|\rb^d \\
&\qquad\le  \tfrac{1}{d!}\KK[E:hypobound]\hypo(x_b,y_b)\lb \sup_{0\le s\le 1}\exp\left[\KK[E:hypobound]\lb s|x_e-x_b|+s|y_e-y_b|\rb\right]\rb \lb |x_e-x_b|+|y_e-y_b|\rb^d \\
&\qquad=  \tfrac{1}{d!}\KK[E:hypobound]\hypo(x_b,y_b)\exp\left[\KK[E:hypobound]\lb |x_e-x_b|+|y_e-y_b|\rb\right] \lb |x_e-x_b|+|y_e-y_b|\rb^d
\end{align*}
for $d\in \{1,2,3\}$.
In other words,
\begin{equation}\label{E:Tboundsb1}\frac{\left|\left(\Res^{(d)}_{(x_b,y_b)}b_1\right)(x_e,y_e)\right|}{\hypo(x_b,y_b)}
\le \tfrac{\KK[E:hypobound]}{d!}\exp\left[\KK[E:hypobound]\lb |x_e-x_b|+|y_e-y_b|\rb\right] \lb |x_e-x_b|+|y_e-y_b|\rb^d \end{equation}
for $d\in \{1,2,3\}$.
While \eqref{E:Tbounds} is standard, the more complicated bounds of \eqref{E:Tboundsb1} will be needed to control regularity of $b_1$ in terms of $\hypo$. 

\subsection{Partial Differential Operator}
If $g$ is a real-valued function which is defined and twice-differentiable on some open subset $\openstrip'$ of $\openstrip$ (typically either $\openstrip$ or $\domint$), we define
\begin{equation}\label{E:genPdef} \begin{aligned} (\genP g)(t,x,y)&\Def \frac12 \frac{\partial^2 g}{\partial y^2}(t,x,y)+b_1(x,y)\frac{\partial g}{\partial x}(t,x,y)+b_2(x,y)\frac{\partial g}{\partial y}(t,x,y)\\
&\qquad +c(x,y)g(t,x,y)-\frac{\partial g}{\partial t}(t,x,y). \end{aligned} \qquad (t,x,y)\in \openstrip'
\end{equation}
If $u\in C(\dom)$ is twice-differentiable in $\domint$ and satisfies 
\begin{align*} \left(\genP u\right)(t,x,y)&=-f(t,x,y)\qquad (t,x,y)\in \domint \\
u(0,x,y)&= u_\circ(x,y) \qquad (x,y)\in \bdy_i\\
u(t,0,y)&= u_\bdy(t,y) \qquad (t,y)\in \bdy_s \end{align*}
then $u$ is a classical solution of \eqref{E:MainPDE} (see also \cite{manfredini1997dirichlet} for some related regularity results).

\section{Main Parts of Parametrix and Approximate Boundary Potential}\label{S:Kernel}
Let's define an approximate heat kernel for \eqref{E:MainPDE}.  We want the heat kernel to capture the smoothing effect of the H\"ormander condition \eqref{E:parahor} and be simple enough to manipulate.  

Thinking of \eqref{E:MainPDE} as a variation on the Kolmogorov equation for a density, let's fix $(\SP)\in \RR$ and consider the stochastic differential equation
\begin{equation}\label{E:mainprocess}\begin{aligned} dX_t &= -b_1(X_t,Y_t)dt \\
dY_t &= -b_2(X_t,Y_t)dt + d\rvW_t \\
(X_0,Y_0)&= (\SP).\end{aligned}\qquad t>0\end{equation}
The Kolmogorov diffusion of \eqref{E:KolmogorovSDE} suggests that we approximate $b_1$ with a linearization; for $(\SP)\in \RR$, define
\begin{equation*}
b^L_{1,\SP}(x,y)\Def b_1(\SP)+\frac{\partial b_1}{\partial y}(\SP)(y-y_\circ)=b_1(\SP)+\hypo(\SP)(y-y_\circ)\end{equation*}
for $(x,y)\in \RR$.
For $t$ small, the solution of \eqref{E:mainprocess} should satisfy 
\begin{equation} \label{E:approxSDE} \begin{aligned} dX_t &\approx -b^L_{1,\SP}(X_t,Y_t)dt =-\lb b_1(\SP)+\frac{\partial b_1}{\partial y}(\SP)(Y_t-y_\circ)\rb dt \\
dY_t &\approx -b_2(\SP)dt + d\rvW_t
\end{aligned}\qquad t>0\end{equation}
with initial condition $(X_0,Y_0)= (\SP)$.
We can explicitly solve this approximation as
\begin{equation} \label{E:approxSDEsolution} \begin{aligned} X_t&\approx x_\circ - b_1(\SP)t - \hypo(\SP) \int_{s=0}^t \lb -b_2(\SP)s + \rvW_s\rb ds \\
&= x_\circ - b_1(\SP)t +\tfrac12 \hypo(\SP)b_2(\SP)t^2 - \hypo(\SP)\int_{s=0}^t \rvW_s ds\\
Y_t &\approx y_\circ - b_2(\SP)t + \rvW_t
\end{aligned}\end{equation}
for $t\ge 0$.

\begin{remark}\label{R:ForwardBackward}
The Feynman-Kac formula \eqref{E:exitstatistics}  depends on the fact that the generator of \eqref{E:processes} is the partial differential operator on the right-hand side of the first line of \eqref{E:KPDE}.  On the other hand, we want to construct a kernel for \eqref{E:MainPDE} as a \emph{density}.  Densities evolve according to the adjoint of the generator.  The generator of \eqref{E:approxSDE} is
\begin{equation} \label{E:forwardL} \begin{aligned} \left(\genL_{\eqref{E:approxSDE},\SP}f\right)(x,y) &= \frac12 \frac{\partial ^2 f}{\partial y^2}(x,y)-\lb b_1(\SP)+\hypo(\SP)(y-y_\circ)\rb \frac{\partial f}{\partial x}(x,y)\\
&\qquad -b_2(\SP)\frac{\partial f}{\partial y}(x,y) \qquad f\in C^2(\R^2),\, (x,y)\in \R^2 \end{aligned}\end{equation}
the adjoint of which is
\begin{align*}\left(\genL^*_{\eqref{E:approxSDE},\SP}f\right)(x,y) &= \frac12 \frac{\partial ^2 f}{\partial y^2}(x,y)+\lb b_1(\SP)+\frac{\partial b_1}{\partial y}(\SP)(y-y_\circ)\rb \frac{\partial f}{\partial x}(x,y) \\
&\qquad +b_2(\SP)\frac{\partial f}{\partial y}(x,y). \qquad f\in C^2(\R^2),\, (x,y)\in \R^2 \end{align*}
This approximates the first-and second-order terms in \eqref{E:MainPDE}.  
\end{remark}

 Comparable to \eqref{E:statistics}, the mean and variance of the right-hand side of \eqref{E:approxSDEsolution} are (using \eqref{E:covarcalc})
\begin{equation}\label{E:thesestatistics} \begin{gathered} \begin{pmatrix} \mu_1(t,\SP)\\ \mu_2(t,\SP)\end{pmatrix} = \begin{pmatrix}  x_\circ - b_1(\SP)t + \frac12 \hypo(\SP)b_2(\SP)t^2 \\
y_\circ - b_2(\SP)t \end{pmatrix} \\
D(t,\SP)\Def \begin{pmatrix} \frac13 \hypo^2(\SP) t^3 &  -\frac12 \hypo(\SP) t^2 \\
-\frac12 \hypo(\SP) t^2 & t
\end{pmatrix}.
\end{gathered}\end{equation}
for $t>0$.  The determinant and inverse of $D(t,\SP)$ are then
\begin{equation*} 
 \det D(t,\SP) = \tfrac{1}{12}t^4\hypo^2(\SP) \qquad \text{and}\qquad
    D^{-1}(t,\SP)=\begin{pmatrix} \frac{12}{\hypo^2(\SP)t^3} & \frac{6}{\hypo(\SP)t^2} \\
    \frac{6}{\hypo(\SP)t^2} & \frac{4}{t}\end{pmatrix}.
\end{equation*}

We can then explicitly compute the density of the approximation (see \cite{anceschi2019survey, weber1951fundamental}) \eqref{E:approxSDEsolution}, which we can use to build a solution of \eqref{E:MainPDE}.  For $(\SP)$ and $(t,x,y)\in \openstrip$, define
\begin{equation} \label{E:cEDef} \begin{aligned} \cE_{\SP}(t,x,y)&\Def \frac12 \begin{pmatrix} x-\mu_1(t,\SP) & y-\mu_2(t,\SP) \end{pmatrix} D^{-1}(t,\SP)\begin{pmatrix} x-\mu_1(t,\SP) \\ y-\mu_2(t,\SP) \end{pmatrix} \\
&= 6\frac{\left(x-\mu_1(t,\SP)\right)^2}{\hypo^2(\SP)t^3} +6\frac{\left(x-\mu_1(t,\SP)\right)\left(y-\mu_2(t,\SP)\right)}{\hypo(\SP)t^2} +2\frac{\left(y-\mu_2(t,\SP)\right)^2}{t}\\
&= 6\XX_{\SP}^2(t,x)+6\XX_{\SP}(t,x)\YY_{\SP}(t,y)+2\YY^2_{\SP}(t,y) \end{aligned}\end{equation}
where
\begin{equation}\label{E:XXYYDef}\begin{aligned}\XX_{\SP}(t,x)&\Def \frac{x-\mu_1(t,\SP)}{\hypo(\SP)t^{3/2}} = \xx_{\SP}(t,x)-\frac12 b_2(\SP)t^{1/2} \\
\YY_{\SP}(t,y)&\Def \frac{y-\mu_2(t,\SP)}{t^{1/2}}= \yy_{\SP}(t,y)+b_2(\SP)t^{1/2}\end{aligned}\end{equation}
where in turn we define
\begin{equation} \label{E:xxyydef}  \xx_{\SP}(t,x)\Def \frac{x-x_\circ+b_1(\SP)t}{\hypo(\SP)t^{3/2}}\qquad \text{and}\qquad 
\yy_{\SP}(t,y)\Def \frac{y-y_\circ}{t^{1/2}}. \end{equation}
The terms $\xx_{\SP}$ and $\yy_{\SP}$ are the main components of $\XX_{\SP}$ and $\YY_{\SP}$ (i.e., without the effect of $b_2$).

For $(\SP)\in \RR$ and $(t,x,y)\in \openstrip$, set \cite{mckean1962winding,melnykova2020parametric,weber1951fundamental}
\begin{equation}\label{E:kernelDef} \kernel_{\SP}(t,x,y)
= \frac{1}{2\pi \sqrt{\det D(t,\SP)}}\exp\left[-\cE_{\SP}(t,x,y)\right]=\frac{\sqrt{12}}{2\pi|\hypo(\SP)|t^2}\exp\left[-\cE_{\SP}(t,x,y)\right]. \end{equation}
This will act as the first approximation of the heat kernel of \eqref{E:MainPDE}.

For future reference,
\begin{equation} \label{E:spatialderivatives}
\begin{aligned}
\frac{\partial \cE_{\SP}}{\partial x}(t,x,y)
    &= \frac{1}{\hypo(\SP)t^{3/2}}\lb 12\XX_{\SP}(t,x)+6\YY_{\SP}(t,y)\rb \\
\frac{\partial \cE_{\SP}}{\partial y}(t,x,y)
    &= \frac{1}{t^{1/2}}\lb 6\XX_{\SP}(t,x)+4\YY_{\SP}(t,y)\rb 
\end{aligned}\end{equation}
for $(\SP)\in \RR$ and $(t,x,y)\in \openstrip$.

Not surprisingly, $\kernel_{\SP}$ solves a linearized PDE corresponding to the generator of \eqref{E:approxSDE}.  For a real-valued function $g$ which is defined and twice-differentiable on a subset $\openstrip'$ of $\openstrip$, define
\begin{equation}\label{E:linearizedgenerator}\begin{aligned}
 (\genP^L_{\SP} g)(t,x,y)&\Def \frac12 \frac{\partial^2 g}{\partial y^2}(t,x,y)+b^L_{1.\SP}(x,y)\frac{\partial g}{\partial x}(t,x,y)+b_2(\SP)\frac{\partial g}{\partial y}(t,x,y)\\
&\qquad -\frac{\partial g}{\partial t}(t,x,y). \end{aligned} \qquad (t,x,y)\in \openstrip'\end{equation}
Recalling Remark \ref{R:ForwardBackward},
\begin{equation}\label{E:combinedgenerators}
 \genP^L_{\SP}= \genL^*_{\eqref{E:approxSDE},\SP}-\frac{\partial}{\partial t}
 \end{equation}

\begin{proposition}\label{P:KEvol}  Fix $(\SP)\in \RR$.  Then $\genP^L_{\SP}\kernel_{\SP}=0$ on $\openstrip$.
\end{proposition}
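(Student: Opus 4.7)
There are two natural routes. The conceptual one is to observe that $\kernel_{\SP}$ is, by design, the Gaussian transition density whose mean vector and covariance matrix are the $(\mu_1,\mu_2)$ and $D$ of \eqref{E:thesestatistics}, namely the law of the explicit linear Gaussian solution \eqref{E:approxSDEsolution} of the linear SDE \eqref{E:approxSDE}. Its density therefore satisfies the Kolmogorov forward equation $\partial_t \kernel = \genL^*_{\eqref{E:approxSDE},\SP}\kernel$, which by the identification \eqref{E:combinedgenerators} is precisely the assertion $\genP^L_{\SP}\kernel_{\SP}=0$. For a self-contained verification, however, one can just differentiate.

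\textbf{Reduction to an identity for $\cE_{\SP}$.} Write $\kernel_{\SP}(t,x,y)=A(t)\exp[-\cE_{\SP}(t,x,y)]$ where $A(t)=\sqrt{12}/(2\pi|\hypo(\SP)|t^2)$, so that $A'(t)/A(t)=-2/t$. Elementary differentiation gives
\begin{align*}
\frac{\partial_t\kernel_{\SP}}{\kernel_{\SP}}&=-\tfrac{2}{t}-\partial_t\cE_{\SP}, &
\frac{\partial_x\kernel_{\SP}}{\kernel_{\SP}}&=-\partial_x\cE_{\SP}, \\
\frac{\partial_y\kernel_{\SP}}{\kernel_{\SP}}&=-\partial_y\cE_{\SP}, &
\frac{\partial_{yy}\kernel_{\SP}}{\kernel_{\SP}}&=(\partial_y\cE_{\SP})^2-\partial_{yy}\cE_{\SP}.
\end{align*}
Substituting into \eqref{E:linearizedgenerator} and collecting terms,
\begin{equation*}
\frac{\genP^L_{\SP}\kernel_{\SP}}{\kernel_{\SP}}=\tfrac12(\partial_y\cE_{\SP})^2+\tfrac{2}{t}-\bigl(\genP^L_{\SP}\cE_{\SP}\bigr).
\end{equation*}
Thus it suffices to prove the scalar identity
\begin{equation*}
\genP^L_{\SP}\cE_{\SP}(t,x,y)=\tfrac{2}{t}+\tfrac12\bigl(\partial_y\cE_{\SP}(t,x,y)\bigr)^2.
\end{equation*}

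\textbf{Verification of the identity.} The spatial first derivatives are already listed in \eqref{E:spatialderivatives}, and a one-line calculation from \eqref{E:cEDef} gives $\partial_{yy}\cE_{\SP}=4/t$. To compute $\partial_t\cE_{\SP}$ I first differentiate $\XX_{\SP}$ and $\YY_{\SP}$ in $t$, using $\partial_t\mu_1=-b_1(\SP)+\hypo(\SP)b_2(\SP)t$ and $\partial_t\mu_2=-b_2(\SP)$, to obtain
\begin{equation*}
\partial_t\XX_{\SP}=\frac{b_1(\SP)-\hypo(\SP)b_2(\SP)t}{\hypo(\SP)t^{3/2}}-\frac{3}{2t}\XX_{\SP},\qquad \partial_t\YY_{\SP}=\frac{b_2(\SP)}{t^{1/2}}-\frac{1}{2t}\YY_{\SP},
\end{equation*}
and then use the chain rule in $\cE_{\SP}=6\XX_{\SP}^2+6\XX_{\SP}\YY_{\SP}+2\YY_{\SP}^2$, noting that the coefficients of $\partial_t\XX_{\SP}$ and $\partial_t\YY_{\SP}$ are exactly $\hypo(\SP)t^{3/2}\,\partial_x\cE_{\SP}$ and $t^{1/2}\,\partial_y\cE_{\SP}$. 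The nonlinear terms in $\XX_{\SP},\YY_{\SP}$ coming from $\partial_t$ then line up to be exactly canceled by the terms produced by $b^L_{1,\SP}(x,y)\partial_x\cE_{\SP}+b_2(\SP)\partial_y\cE_{\SP}$, once one writes $b^L_{1,\SP}(x,y)=\bigl(b_1(\SP)-\hypo(\SP)b_2(\SP)t\bigr)+\hypo(\SP)\bigl(y-\mu_2(t,\SP)\bigr)$. The leftover linear-in-$\XX_{\SP},\YY_{\SP}$ pieces combine into $\tfrac{1}{2t}(6\XX_{\SP}+4\YY_{\SP})^2$, while $\tfrac12\partial_{yy}\cE_{\SP}=2/t$ supplies the $2/t$, yielding the claim.

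\textbf{Main obstacle.} There is no real conceptual obstacle; the whole proof is a bookkeeping exercise. The one place requiring care is the cancellation between $\partial_t\cE_{\SP}$ and $b^L_{1,\SP}\partial_x\cE_{\SP}$: the $y$-dependent piece $\hypo(\SP)(y-\mu_2)$ of $b^L_{1,\SP}$ produces cross terms $\YY_{\SP}(12\XX_{\SP}+6\YY_{\SP})/t$, and it is precisely these, together with the quadratic terms $-\tfrac{3}{2t}\XX_{\SP}(12\XX_{\SP}+6\YY_{\SP})-\tfrac{1}{2t}\YY_{\SP}(6\XX_{\SP}+4\YY_{\SP})$ from $\partial_t\cE_{\SP}$, that must conspire to give $\tfrac{1}{2t}(6\XX_{\SP}+4\YY_{\SP})^2$.
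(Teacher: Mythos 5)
Your proposal is correct, and its first paragraph is exactly the paper's proof: $\kernel_{\SP}$ is by construction the Gaussian density of the linear SDE \eqref{E:approxSDE}, hence satisfies the forward (Fokker--Planck) equation, which via \eqref{E:combinedgenerators} is precisely $\genP^L_{\SP}\kernel_{\SP}=0$. The supplementary direct differentiation through the identity $\genP^L_{\SP}\cE_{\SP}=\tfrac{2}{t}+\tfrac12(\partial_y\cE_{\SP})^2$ is also sound (your arithmetic checks out), apart from one harmless slip of wording in the middle paragraph where ``nonlinear'' and ``linear'' are swapped --- it is the linear-in-$(\XX_{\SP},\YY_{\SP})$ terms of $\partial_t\cE_{\SP}$ that cancel against the constant-coefficient part of the drift, while the quadratic pieces combine into $\tfrac{1}{2t}(6\XX_{\SP}+4\YY_{\SP})^2$, as your final paragraph correctly states.
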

\begin{proof}
The SDE suggested by \eqref{E:approxSDE} (replace approximation with equality) has generator \eqref{E:forwardL}.
The density of this SDE is $\kernel_{\SP}$; keeping \eqref{E:combinedgenerators}, the claim follows.
\end{proof}
\noindent We will build upon this in Section \ref{S:Corrections}.

For each $y_\circ\in \R$ and $(t,x,y)\in \openstrip$, let's next define
\begin{equation}\label{E:sidekernelDef}\begin{aligned} \sidekernel_{y_\circ}(t,x,y)&\Def |b_1(0,y_\circ)|\kernel_{0,y_\circ}(t,x,y)\\
&=\frac{\sqrt{12}|b_1(0,y_\circ)|}{2\pi|\hypo(0,y_\circ)|t^2}\exp\left[-6\left(\frac{x-\mu_1(t,0,y_\circ)}{\hypo(0,y_\circ)t^{3/2}}\right)^2\right.\\
&\qquad \qquad \left.-6\left(\frac{x-\mu_1(t,0,y_\circ)}{\hypo(0,y_\circ)t^{3/2}}\right)\left(\frac{y-\mu_2(t,0,y_\circ)}{t^{1/2}}\right) -2\left(\frac{y-\mu_2(t,x_\circ0,y_\circ)}{t^{1/2}}\right)^2\right] \\
\end{aligned}\end{equation}
which will act as an approximate \lq\lq single layer potential" which will allow us to include the effect of $u_\bdy$ in \eqref{E:MainPDE}.  From Proposition \ref{P:KEvol}, we of course have that $\genP^L_{0,y_\circ}\sidekernel_{y_\circ}=0$ on $\openstrip$.  As with $\kernel$, we will improve upon $\sidekernel$ in Section \ref{S:Corrections}.

To more efficiently prove some needed estimates on $\kernel$ and $\sidekernel$, let's prove a Gaussian-type bound with simpler and separated dependence on $\xx_{\SP}$ and $\yy_{\SP}$.  The exponential nature of this bound will allow us to control various smaller errors.  We secondly prove a bound which allows us to explicitly bound various integrals in $x_\circ$, $y_\circ$ and $t$.

Let's start by comparing $\XX_{\SP}$ and $\YY_{\SP}$ of \eqref{E:XXYYDef} to, respectively, $\xx_{\SP}$ and $\yy_{\SP}$ of \eqref{E:xxyydef}; we have that
\begin{equation} \label{E:differences}
\left|\XX_{\SP}(t,x)-\xx_{\SP}(t,x)\right|
\le \KK[E:boundedness]t^{1/2} \qquad \text{and}\qquad 
\left|\YY_{\SP}(t,y)-\yy_{\SP}(t,y)\right|\le \KK[E:boundedness]t^{1/2}
\end{equation}
for $(\SP)\in \RR$ and $(t,x,y)\in \openstrip$.

Let's write down some standard bounds on squares.
\begin{lemma}\label{L:Young}
For any $\xi$ and $\tilde \xi$ in $\R$,
\begin{equation*} (\xi+\tilde \xi)^2\le 2\xi^2 + 2\tilde \xi^2 \qquad \text{and}\qquad (\xi+\tilde \xi)^2\ge \tfrac12 \xi^2 - \tilde \xi^2. \end{equation*}
\end{lemma}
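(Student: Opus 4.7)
The plan is to expand the square $(\xi+\tilde\xi)^2 = \xi^2 + 2\xi\tilde\xi + \tilde\xi^2$ and bound the cross term $2\xi\tilde\xi$ from above and from below by two suitably weighted applications of the elementary inequality ``the square of a real number is nonnegative.''

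For the upper bound, I would expand $(|\xi|-|\tilde\xi|)^2 \ge 0$ to obtain $2\xi\tilde\xi \le 2|\xi||\tilde\xi| \le \xi^2 + \tilde\xi^2$; substituting back into the expansion of $(\xi+\tilde\xi)^2$ yields $\xi^2 + 2\xi\tilde\xi + \tilde\xi^2 \le 2\xi^2 + 2\tilde\xi^2$, which is the first inequality.

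For the lower bound, I would use a weighted version of the same trick. Specifically, the inequality $\bigl(\tfrac{1}{\sqrt{2}}\xi + \sqrt{2}\,\tilde\xi\bigr)^2 \ge 0$ expands to $\tfrac{1}{2}\xi^2 + 2\xi\tilde\xi + 2\tilde\xi^2 \ge 0$, which rearranges to $2\xi\tilde\xi \ge -\tfrac{1}{2}\xi^2 - 2\tilde\xi^2$. Substituting this lower bound into $(\xi+\tilde\xi)^2 = \xi^2 + 2\xi\tilde\xi + \tilde\xi^2$ gives $(\xi+\tilde\xi)^2 \ge \xi^2 - \tfrac{1}{2}\xi^2 - 2\tilde\xi^2 + \tilde\xi^2 = \tfrac{1}{2}\xi^2 - \tilde\xi^2$, establishing the second inequality. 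There is no genuine obstacle here; the only ``choice'' is the weight $\tfrac{1}{\sqrt{2}}$ vs.\ $\sqrt{2}$, which is dictated by the target coefficients $\tfrac12$ and $1$ appearing in the statement.
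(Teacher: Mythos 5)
Your proof is correct. For the first inequality you argue exactly as the paper does: expand the square and bound the cross term by $2\xi\tilde\xi\le \xi^2+\tilde\xi^2$. For the second inequality your route differs slightly from the paper's: you bound the cross term from below via the weighted square $\bigl(\tfrac{1}{\sqrt{2}}\xi+\sqrt{2}\,\tilde\xi\bigr)^2\ge 0$ (i.e.\ Young's inequality with a well-chosen weight), whereas the paper instead bootstraps its own first inequality, writing $\xi^2=(\xi+\tilde\xi-\tilde\xi)^2\le 2(\xi+\tilde\xi)^2+2\tilde\xi^2$ and rearranging. The two arguments are equally elementary and give the same constants; the paper's version is marginally slicker in that it needs no new choice of weight, reusing the bound just proved, while yours makes the role of the weighted Young inequality explicit, which matches how the same trick is invoked later in the paper (e.g.\ in the proof of Lemma \ref{L:KandhatK} with an explicit $\eps$). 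Either proof is perfectly acceptable.
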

\begin{proof} By the Cauchy–Bunyakovsky–Schwarz inequality (or alternately, convexity),
\begin{equation*} (\xi+\tilde \xi)^2 = \xi^2+2\xi\tilde \xi+\tilde \xi^2\le 2\xi^2+2\tilde \xi^2 \end{equation*}
which is the first claim.  Writing
\begin{equation*} \xi^2=(\xi+\tilde \xi-\tilde \xi)^2 \le 2(\xi+\tilde \xi)^2 + 2\tilde \xi^2 \end{equation*}
and rearranging, we get the second claim.\end{proof}

For $\alpha>0$ and $(\SP)\in \RR$, define
\begin{equation}\label{E:kernelboundDef}\begin{aligned} \kernelbound{\alpha}_{\SP}(t,x,y)&\Def \frac{1}{|\hypo(\SP)|t^2}\exp\left[-\alpha\xx^2_{\SP}(t,x)-\alpha\yy^2_{\SP}(t,y)\right]\\
&= \lb \frac{1}{|\hypo(\SP)|t^{3/2}}\exp\left[-\alpha\left(\frac{x-x_\circ+b_1(\SP)t}{\hypo(\SP)t^{3/2}}\right)^2\right]\rb \lb \frac{1}{t^{1/2}}\exp\left[-\alpha\left(\frac{y-y_\circ}{t^{1/2}}\right)^2\right]\rb \end{aligned}\end{equation} 
for $(t,x,y)\in \openstrip$.
\begin{lemma}\label{L:KandhatK}
For $(\SP)\in \RR$,
\begin{equation*}\left|\kernel_{\SP}(t,x,y)\right|\le \KK[E:KandhatKConst] \kernelbound{\sfrac{1}{6}}_{\SP}(t,x,y)  \qquad \text{and}\qquad
\left|\sidekernel_{y_\circ}(t,x,y)\right|\le \KK[E:KandhatKConst] \kernelbound{\sfrac{1}{6}}_{0,y_\circ}(t,x,y)  \end{equation*}
for $(t,x,y)\in \openstrip$, where
\begin{equation} \label{E:KandhatKConst}
\KK[E:KandhatKConst]\Def \frac{\sqrt{12}}{2\pi}\exp\left[\frac23 \KK[E:boundedness]^2\TT \right]\max\lb 1,\KK[E:boundedness]\rb. \end{equation}
\end{lemma}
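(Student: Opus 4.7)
The plan is to reduce the claim to a clean quadratic lower bound on $\cE_{\SP}$ in the variables $\xx_{\SP}, \yy_{\SP}$, absorb all remaining constants into the displayed prefactor, and then handle $\sidekernel_{y_\circ}$ by simply exploiting the extra bounded factor $|b_1(0,y_\circ)|$.

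First, I would establish a lower bound of the form $\cE_{\SP}(t,x,y) \ge \tfrac{1}{3}\bigl(\XX_{\SP}^2(t,x) + \YY_{\SP}^2(t,y)\bigr)$. From \eqref{E:cEDef}, this is just the statement that the symmetric matrix $\bigl(\begin{smallmatrix} 6 & 3 \\ 3 & 2\end{smallmatrix}\bigr)$ has smallest eigenvalue at least $\tfrac13$; equivalently, $\det\bigl(\begin{smallmatrix} 6-\tfrac13 & 3 \\ 3 & 2-\tfrac13\end{smallmatrix}\bigr) = \tfrac{17}{3}\cdot\tfrac{5}{3}-9 = \tfrac{85-81}{9}\ge 0$, and the diagonal entries exceed $\tfrac13$. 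So $\cE_{\SP}\ge \tfrac13(\XX_{\SP}^2+\YY_{\SP}^2)$.

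Second, I would transfer the bound from $(\XX_{\SP},\YY_{\SP})$ to $(\xx_{\SP},\yy_{\SP})$ using the second inequality of Lemma~\ref{L:Young} together with \eqref{E:differences}. Writing $\xx_{\SP}=\XX_{\SP}+(\xx_{\SP}-\XX_{\SP})$ and applying Lemma~\ref{L:Young} gives $\XX_{\SP}^2 \ge \tfrac12 \xx_{\SP}^2 - (\xx_{\SP}-\XX_{\SP})^2 \ge \tfrac12 \xx_{\SP}^2 - \KK[E:boundedness]^2 t$, and analogously for $\YY_{\SP}$. Summing and using $t\le \TT$ yields
\begin{equation*}
\cE_{\SP}(t,x,y) \ge \tfrac{1}{6}\bigl(\xx_{\SP}^2(t,x)+\yy_{\SP}^2(t,y)\bigr) - \tfrac{2}{3}\KK[E:boundedness]^2 \TT.
\end{equation*}
Exponentiating converts this into the pointwise bound
$\exp[-\cE_{\SP}(t,x,y)] \le \exp\bigl[\tfrac23\KK[E:boundedness]^2\TT\bigr]\exp\bigl[-\tfrac16\xx_{\SP}^2-\tfrac16\yy_{\SP}^2\bigr]$.

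Third, I would multiply through by the prefactor $\tfrac{\sqrt{12}}{2\pi|\hypo(\SP)|t^2}$ of \eqref{E:kernelDef} and recognize the right-hand side as $\kernelbound{1/6}_{\SP}(t,x,y)$ from \eqref{E:kernelboundDef}; this gives the first inequality with a constant $\tfrac{\sqrt{12}}{2\pi}\exp\bigl[\tfrac23\KK[E:boundedness]^2\TT\bigr]$. For the side-kernel statement, \eqref{E:sidekernelDef} introduces the extra factor $|b_1(0,y_\circ)|$, which by Assumption~\ref{A:boundedness} is bounded by $\KK[E:boundedness]$. Absorbing this via the $\max(1,\KK[E:boundedness])$ in \eqref{E:KandhatKConst} produces a single constant that suffices for both bounds simultaneously.

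None of this should be hard; the only real decision is choosing the spectral lower bound $\tfrac13$ and then organizing constants so that a single $\KK[E:KandhatKConst]$ handles both $\kernel$ and $\sidekernel$. The mildly non-routine step is the eigenvalue inequality, but since $(11/3)^2 = 121/9 > 13$ the bound $4-\sqrt{13} > 1/3$ is immediate, so there is no genuine obstacle.
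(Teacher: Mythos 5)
Your proposal is correct and follows essentially the same route as the paper: the quadratic-form lower bound $\cE_{\SP}\ge\tfrac13(\XX_{\SP}^2+\YY_{\SP}^2)$, the transfer to $(\xx_{\SP},\yy_{\SP})$ via \eqref{E:differences} and Lemma \ref{L:Young} at the cost of $\exp[\tfrac23\KK[E:boundedness]^2\TT]$, and absorbing $|b_1(0,y_\circ)|\le\KK[E:boundedness]$ into $\max\lb 1,\KK[E:boundedness]\rb$ for $\sidekernel$. The only cosmetic difference is that you prove the quadratic bound by the positive-semidefiniteness (eigenvalue/determinant) criterion for $\bigl(\begin{smallmatrix}6&3\\3&2\end{smallmatrix}\bigr)-\tfrac13 I$, which is precisely the alternate argument the paper itself records immediately after its Young's-inequality computation.
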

\begin{proof}
Let's start with a lower bound on the quadratic form in \eqref{E:cEDef}.
Set $\eps \Def \sqrt{\sfrac{83}{135}}$.  
For $(u,v)\in \RR$,
the Cauchy–Bunyakovsky–Schwarz inequality (i.e., Young's inequality with $\eps$) implies that
\begin{equation} \label{E:qformlowerbound}\begin{aligned} 6u^2+6uv+2v^2 
&= 6u^2+2v^2+ 2\left(3\eps u\right)\left(\frac{v}{\eps}\right)
\ge 6u^2+2v^2-\left(3\eps u\right)^2-\left(\frac{v}{\eps}\right)^2\\
&=\lb 6-9\eps^2\rb u^2 + \lb 2-\frac{1}{\eps^2}\rb v^2 
= \frac{63}{135}u^2 + \frac{31}{83}v^2 
\ge \frac13\{u^2+v^2\}. \end{aligned}\end{equation}

Fix $(\SP)\in \RR$ and $(t,x,y)\in \openstrip$. 
 Then
\begin{equation*} \left|\kernel_{\SP}(t,x,y)\right|
\le \frac{\sqrt{12}}{2\pi}\frac{1}{|\hypo(\SP)|t^2}\exp\left[-\tfrac13 \XX^2_{\SP}(t,x)-\tfrac13 \YY^2_{\SP}(t,y)\right]. \end{equation*}
Combining \eqref{E:differences} and Lemma \ref{L:Young}, we have
\begin{align*}
\XX_{\SP}^2(t,x)
&\ge \frac12 \xx^2_{\SP}(t,x) - \KK[E:boundedness]^2 t\ge  \frac12 \xx^2_{\SP}(t,x) - \KK[E:boundedness]^2 \TT \\
\YY^2_{\SP}(t,y)
&\ge \frac12 \yy^2_{\SP}(t,y)-\KK[E:boundedness]^2 t \ge \frac12 \yy^2_{\SP}(t,y)-\KK[E:boundedness]^2 \TT  \end{align*}
and the bound on $\kernel$ follows.  Since $|b_1(0,y_\circ)|\le \KK[E:boundedness]$, the claim on $\sidekernel$ also follows.
\end{proof}
\noindent An alternate proof of \eqref{E:qformlowerbound} would be that the eigenvalues of
\begin{equation*} \begin{pmatrix} 6 & 3 \\ 3 & 2\end{pmatrix}\end{equation*}
are 
\begin{equation*} \tfrac12\lb 8 \pm \sqrt{52}\rb >\tfrac13;\end{equation*}
thus
\begin{equation*} \frac12 \begin{pmatrix} 12 & 6 \\ 6 & 4 \end{pmatrix} =\begin{pmatrix} 6 & 3 \\ 3 & 2\end{pmatrix}\ge  \frac13 \begin{pmatrix} 1 & 0 \\
    0 & 1\end{pmatrix}. \end{equation*}

We note that $\alpha \mapsto \kernelbound{\alpha}$ is pointwise decreasing in $\alpha$.  In particular
\begin{equation}\label{E:kernelmonotonicity} \kernelbound{\sfrac{1}{12}}_{\SP}(t,x,y)= \exp\left[\tfrac{1}{12}\lb \xx^2_{\SP}(t,x)+\yy^2_{\SP}(t,y)\rb \right]\kernelbound{\sfrac{1}{6}}_{\SP}(t,x,y) \end{equation}
for all $(\SP)\in \RR$ and $(t,x,y)\in \openstrip$.  This will allow us to bound various various pre-exponential terms which are polynomial in $\xx_{\SP}$ and $\yy_{\SP}$ while still retaining Gaussian tails.

\section{Parametrix Calculations in the Interior}\label{S:Parametrix}

We want to use $\kernel$ of \eqref{E:kernelDef} as a parametrix for \eqref{E:MainPDE} in $\domint$.  Proposition \ref{P:KEvol} suggests that $\kernel$ approximately is in the kernel of $\genP$ of \eqref{E:genPdef}.  We also need to show \emph{integrability} and that it is approximately a Dirac function as $t\searrow 0$.   We will improve upon $\kernel$ in Section \ref{S:Corrections}, so we will show appropriate integrability of $\kernelbound{\sfrac{1}{12}}$ of \eqref{E:kernelboundDef}; keep Lemma \ref{L:KandhatK} and \eqref{E:kernelmonotonicity} in mind.

A simple Gaussian calculation (reflecting the fact that $\kernel$ was defined as a density) shows that $\kernelbound{\alpha}_{\SP}(t,\cdot,\cdot)$ should be integrable for each $\alpha>0$, $(\SP)\in \RR$ and $t>0$.  In using $\kernel$ as a parametrix, however, we want to integrate against $(\SP)$.  
For $(\SP)\in \RR$ and $(t,x,y)\in \openstrip$, set
\begin{equation}\label{E:kernelboundintegrableDef} \begin{aligned} \kernelboundintegrable_{\SP}(t,x,y)&\Def \lb \bOne_{\{|x-x_\circ|\ge 2\KK[E:boundedness]\TT\}}\frac{t^{3/2}}{|x-x_\circ|^2+1}\right. \\
&\qquad \left.+ \bOne_{\{|x-x_\circ|< 2\KK[E:boundedness]\TT\}}\frac{1}{\hypo(x,y_\circ)t^{3/2}}\exp\left[-\frac{1}{\KK[E:Kashmir]}\left(\frac{x-x_\circ+b_1(x,y_\circ)t}{\hypo(x,y_\circ)t^{3/2}}\right)^2\right]\rb\\
&\qquad \qquad \times \frac{1}{\sqrt{t}}\exp\left[-\frac{(y-y_\circ)^2}{12t}\right] \end{aligned}\end{equation}
where
\begin{equation} \label{E:Kashmir} \KK[E:Kashmir] \Def 48\exp\left[4\KK[E:boundedness]\KK[E:hypobound]\TT\right]. \end{equation}
\begin{proposition}\label{P:integrablekernel}  There is a $\KK[E:integrablekernel]>0$ such that
\begin{equation} \label{E:integrablekernel}
\kernelbound{\sfrac{1}{12}}_{\SP}(t,x,y)\le \KK[E:integrablekernel]\kernelboundintegrable_{\SP}(t,x,y)\end{equation}
for all $(\SP)\in \RR$ and $(t,x,y)\in \openstrip$.\end{proposition}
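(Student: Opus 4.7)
Both $\kernelbound{\sfrac{1}{12}}_{\SP}(t,x,y)$ and $\kernelboundintegrable_{\SP}(t,x,y)$ factor as the common $y$-piece $t^{-1/2}\exp[-(y-y_\circ)^2/(12t)]$ (matching $\yy^2_{\SP}/12=(y-y_\circ)^2/(12t)$ in $\kernelbound{\sfrac{1}{12}}_{\SP}$) times an $x$-dependent piece, so the inequality reduces to one between $x$-factors. I would split along the threshold $|x-x_\circ|$ versus $2\KK[E:boundedness]\TT$ built into the definition of $\kernelboundintegrable_{\SP}$.

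In the far regime $|x-x_\circ|\ge 2\KK[E:boundedness]\TT$, the bound $|b_1(\SP)t|\le \KK[E:boundedness]\TT\le \tfrac12|x-x_\circ|$ and the triangle inequality give $|x-x_\circ+b_1(\SP)t|\ge \tfrac12|x-x_\circ|$, so $\xx^2_{\SP}(t,x)\ge (x-x_\circ)^2/(4\hypo^2(\SP)t^3)$. Substituting this into the exponent, the claim reduces to a uniform bound on
\[
\frac{(x-x_\circ)^2+1}{|\hypo(\SP)|\,t^3}\exp\!\left[-\frac{c(x-x_\circ)^2}{\hypo^2(\SP)t^3}\right]
\]
(for a small absolute $c>0$). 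I would control this via two elementary maxima: $u\,e^{-au}\le (ae)^{-1}$ applied with $u=(x-x_\circ)^2/t^3$ to handle the $(x-x_\circ)^2$ piece, the same inequality with $u=1/t^3$ to handle the residual $1/t^3$ (using $(x-x_\circ)^2\ge 4\KK[E:boundedness]^2\TT^2$ to force the exponent to dominate), and the elementary maximum $\max_{\hypo>0}\hypo^{-1}\exp[-K/\hypo^2]=O(K^{-1/2})$ to absorb the $1/|\hypo(\SP)|$ prefactor into decay in $|x-x_\circ|$.

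In the near regime $|x-x_\circ|<2\KK[E:boundedness]\TT$, the strategy is to transport the base-point data $(\hypo(\SP),b_1(\SP))$ to the evaluation-point data $(\hypo(x,y_\circ),b_1(x,y_\circ))$, at the cost of widening the Gaussian constant from $12$ to $\KK[E:Kashmir]$. The prefactor ratio $\hypo(\SP)/\hypo(x,y_\circ)$ is bounded by $\exp[2\KK[E:boundedness]\KK[E:hypobound]\TT]$ via \eqref{E:hypoboundsandwich}. For the exponent I set $\widetilde x\Def (x-x_\circ+b_1(x,y_\circ)t)/(\hypo(x,y_\circ)t^{3/2})$ and decompose
\[
\widetilde x=\frac{\hypo(\SP)}{\hypo(x,y_\circ)}\xx_{\SP}(t,x)+\frac{b_1(x,y_\circ)-b_1(\SP)}{\hypo(x,y_\circ)t^{1/2}}.
\]
The Taylor remainder estimate \eqref{E:Tboundsb1} with $d=1$ together with \eqref{E:hypoboundsandwich} gives $|b_1(x,y_\circ)-b_1(\SP)|\le \KK[E:hypobound]\,\hypo(x,y_\circ)\exp[\KK[E:hypobound]|x-x_\circ|]|x-x_\circ|$. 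I would then rewrite $|x-x_\circ|/t^{1/2}$ via $x-x_\circ=A-b_1(\SP)t$ with $A=\hypo(\SP)t^{3/2}\xx_{\SP}(t,x)$, so that $|A|/t^{1/2}=\hypo(\SP)t\,|\xx_{\SP}(t,x)|\le \KK[E:boundedness]\TT|\xx_{\SP}(t,x)|$ and hence $|x-x_\circ|/t^{1/2}\le \KK[E:boundedness]\TT|\xx_{\SP}(t,x)|+\KK[E:boundedness]\TT^{1/2}$. Combining these pieces yields $|\widetilde x|\le C_1|\xx_{\SP}(t,x)|+C_2$ for explicit $\KK[E:boundedness],\KK[E:hypobound],\TT$-dependent constants, and Lemma \ref{L:Young} converts this into $\widetilde x^2/\KK[E:Kashmir]\le \xx^2_{\SP}(t,x)/12+C_3$ once $\KK[E:Kashmir]$ is taken proportional to $C_1^2$; the paper's choice in \eqref{E:Kashmir} is calibrated for exactly this absorption. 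The residual $\exp[C_3]$ is then absorbed into $\KK[E:integrablekernel]$.

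The principal obstacle is the Young's-inequality absorption in the near regime: one must verify that the coefficient of $\xx^2_{\SP}(t,x)$ produced by the decomposition of $\widetilde x$ is strictly less than $\KK[E:Kashmir]/12$ after applying Lemma \ref{L:Young}. This requires careful bookkeeping of the compounding factors $\exp[\KK[E:hypobound]|x-x_\circ|]\le \exp[2\KK[E:boundedness]\KK[E:hypobound]\TT]$ that accrue every time $\hypo$ is transported between base and evaluation points, of the Taylor remainder constants from Assumption \ref{A:hypobound}, and of the factor $\KK[E:boundedness]\TT$ coming from the conversion of $|x-x_\circ|/t^{1/2}$ into $|\xx_{\SP}(t,x)|$.
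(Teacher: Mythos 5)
Your reduction to the common $y$-factor and your treatment of the far regime $|x-x_\circ|\ge 2\KK[E:boundedness]\TT$ are fine and essentially coincide with the paper's Case 1 (the paper uses $e^{-u}\le 1/u$ where you use $u e^{-au}\le (ae)^{-1}$; the $1/|\hypo(\SP)|$ prefactor is absorbed the same way). The gap is in the near regime. Your decomposition $\widetilde x=\tfrac{\hypo(\SP)}{\hypo(x,y_\circ)}\xx_{\SP}(t,x)+\tfrac{b_1(x,y_\circ)-b_1(\SP)}{\hypo(x,y_\circ)t^{1/2}}$ and the use of Assumption \ref{A:hypobound} to cancel $\hypo(x,y_\circ)$ are correct, but the final absorption does not close with the constant fixed in \eqref{E:Kashmir}. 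Writing $\kk_1\Def\exp\left[2\KK[E:boundedness]\KK[E:hypobound]\TT\right]$ as in the paper's proof, your conversion $|x-x_\circ|/t^{1/2}\le \KK[E:boundedness]\TT\left|\xx_{\SP}(t,x)\right|+\KK[E:boundedness]\TT^{1/2}$ gives $|\widetilde x|\le \kk_1\left(1+\KK[E:boundedness]\KK[E:hypobound]\TT\right)\left|\xx_{\SP}(t,x)\right|+C_2$, so even with an optimally tuned Young parameter you need $\kk_1^2\left(1+\KK[E:boundedness]\KK[E:hypobound]\TT\right)^2\le \KK[E:Kashmir]/12=4\kk_1^2$, i.e.\ $\KK[E:boundedness]\KK[E:hypobound]\TT\le 1$, which is not assumed anywhere. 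Since $\widetilde x$ is unbounded on the near regime, proving the estimate with a larger Gaussian denominator is strictly weaker and cannot be repaired by enlarging $\KK[E:integrablekernel]$; and $\KK[E:Kashmir]$ is not at your disposal, being hard-wired into \eqref{E:kernelboundintegrableDef}. So your assertion that \eqref{E:Kashmir} is ``calibrated for exactly this absorption'' is mistaken: it is calibrated for a finer argument that your proposal lacks.

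The missing idea is the paper's dichotomy on the size of $\hypo(x,y_\circ)\sqrt{t}$ (Cases 2a/2b). Bounding $\hypo(\SP)t\left|\xx_{\SP}(t,x)\right|\le\KK[E:boundedness]\TT\left|\xx_{\SP}(t,x)\right|$ discards precisely the small factor that makes the absorption work. The paper instead keeps the Taylor error in the form $(b_1(\SP)-b_1(x,y_\circ))t$ added to $x-x_\circ+b_1(x,y_\circ)t$, so that its contribution proportional to $\widetilde x^2$ carries a prefactor of order $\KK[E:hypobound]^2\TT\,\hypo^2(x,y_\circ)t$; this stays below the available margin $\tfrac{1}{24\kk_1^2}-\tfrac{1}{48\kk_1^2}$ exactly when $\hypo(x,y_\circ)\sqrt{t}$ is below an explicit threshold (Case 2a), while above that threshold the crude bound $|b_1(\SP)-b_1(x,y_\circ)|\le 2\KK[E:boundedness]$ turns the error into an additive constant (Case 2b). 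With this split the exponent constant $1/(48\kk_1^2)=1/\KK[E:Kashmir]$ emerges uniformly in $\KK[E:boundedness]\KK[E:hypobound]\TT$; without it, your single uniform Young absorption fails, for instance, whenever $\TT$ is large.
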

\noindent Before starting the proof, let's recall that all of the terms in the Taylor series of $x\mapsto e^x$ are nonnegative for $x>0$; for any nonnegative $x>0$ and any nonnegative integer $p$, 
\begin{equation}\label{E:firstexpbound} e^x\ge \tfrac{1}{p!}x^p.\end{equation}
Thus
\begin{equation}\label{E:expbound} e^{-x}\le \frac{p!}{x^p} \end{equation}
for $x>0$ and all nonnegative integers $p$.
\begin{proof}[Proof of Proposition \ref{P:integrablekernel}]
Fix $(\SP)\in \RR$ and $(t,x,y)\in \openstrip$. 
 We divide the proof into several cases; see Figure \ref{fig:DiracDecomposition}.

 \begin{figure}[ht] \includegraphics[width=0.4\textwidth]{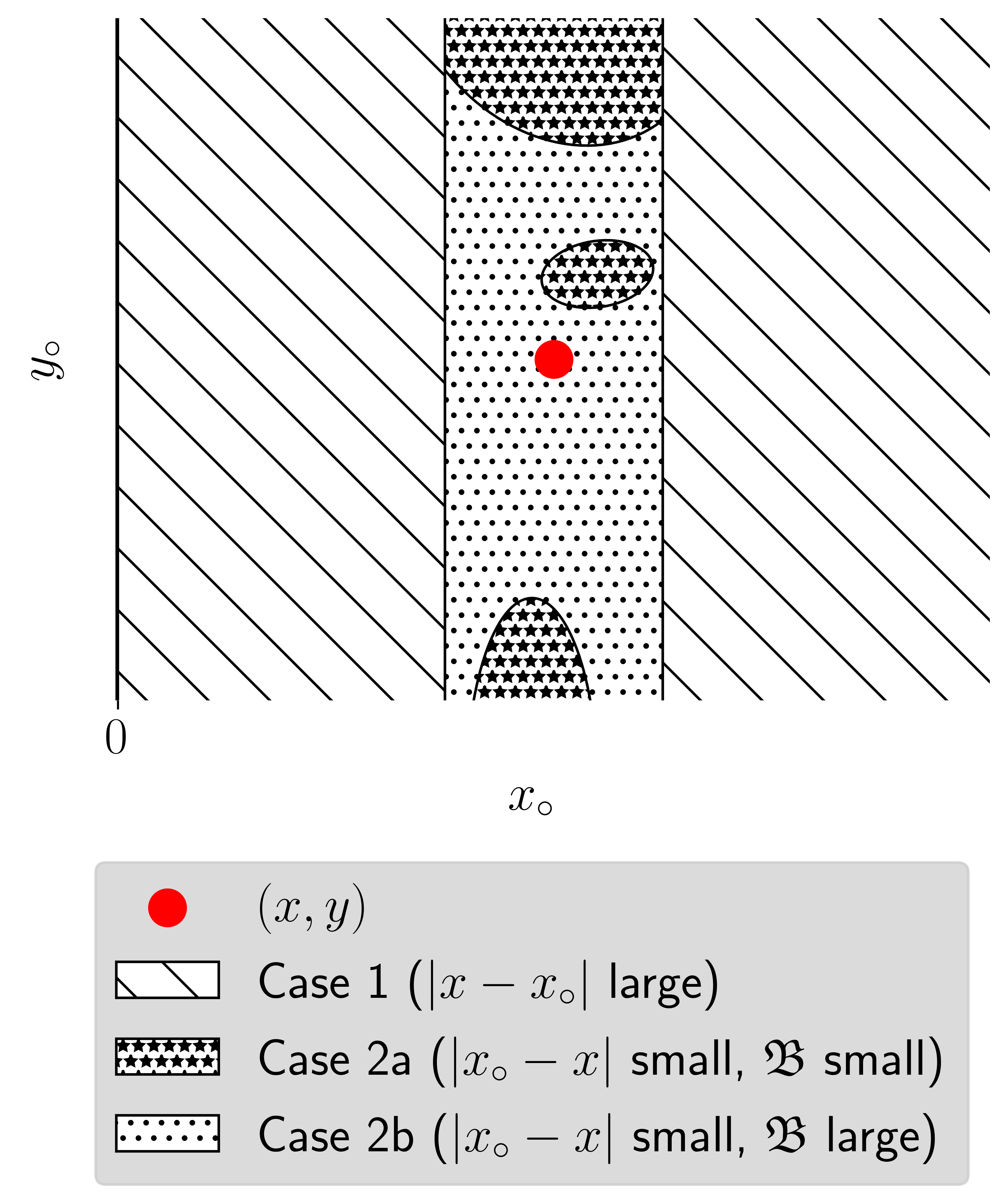}\caption{Cases in proof of Proposition  \ref{P:integrablekernel}}\label{fig:DiracDecomposition}
\end{figure}

\noindent $\bullet$ \emph{Case 1}:  Assume first that $|x-x_\circ|\ge 2\KK[E:boundedness]\TT$.  By the triangle inequality,
\begin{equation*} \left|x-x_\circ+b_1(\SP)t\right|
\ge \left|x-x_\circ\right|-|b_1(\SP)|t
=\tfrac12 |x-x_\circ|+\lb \tfrac12|x-x_\circ|-\KK[E:boundedness]\TT\rb  \ge \tfrac12|x-x_\circ|>0.\end{equation*}
Using \eqref{E:expbound} with $p=1$ and and \eqref{E:hypoisbounded}, we have that
\begin{align*} &\frac{1}{\hypo(\SP)t^{3/2}}\exp\left[-\frac{\left(x-x_\circ+b_1(\SP)t\right)^2}{12\hypo^2(\SP) t^3}\right]
\le \frac{1}{\hypo(\SP)t^{3/2}}\exp\left[-\frac{\left|x-x_\circ\right|^2}{48\hypo^2(\SP)t^3}\right] \\
&\qquad \le \frac{1}{\hypo(\SP)t^{3/2}}\frac{48\hypo^2(\SP)t^3}{\left|x-x_\circ\right|^2} 
=\frac{48\hypo(\SP)t^{3/2}}{|x-x_\circ|^2}
\le \frac{48\KK[E:boundedness]t^{3/2}}{|x-x_\circ|^2}\\
&\qquad = \frac{48\KK[E:boundedness]t^{3/2}}{1+|x-x_\circ|^2}\frac{1+|x-x_\circ|^2}{|x-x_\circ|^2}
= \frac{48\KK[E:boundedness]t^{3/2}}{1+|x-x_\circ|^2}\lb 1+\frac{1}{|x-x_\circ|^2}\rb\\
&\qquad = \lb 1+\frac{1}{2\KK[E:boundedness]\TT}\rb\frac{48\KK[E:boundedness]t^{3/2}}{1+|x-x_\circ|^2}.
\end{align*}
The claim follows in this case.

\noindent $\bullet$ \emph{Case 2}.  We now assume that 
\begin{equation}\label{E:case2} |x-x_\circ|< 2\KK[E:boundedness]\TT.\end{equation}
Using \eqref{E:hypoboundsandwich} with base point $\left(x,y_\circ\right)$ and evaluation point $(x_\circ,y_\circ)$, we then have that 
 \begin{equation}\label{E:journey} \kk_1^{-1}\le \frac{\hypo\left(x_\circ,y_\circ\right)}{\hypo\left(x,y_\circ\right)}\le \kk_1\end{equation}
 where
 \begin{equation*} \kk_1 \Def \exp\left[\KK[E:hypobound]\lb 2\KK[E:boundedness]\TT\rb \right] = \exp\left[2\KK[E:boundedness]\KK[E:hypobound]\TT\right]; \end{equation*}
then $\KK[E:Kashmir] =  48\kk_1^2$.

Using this and Lemma \ref{L:Young}, we have
\begin{equation}\label{E:physicalgraffiti}\begin{aligned} \frac{1}{12}\left(\frac{x-x_\circ+b_1(\SP)t}{\hypo(\SP)t^{3/2}}\right)^2
&\ge \frac{\left(x-x_\circ+b_1(x,y_\circ)t+ \left(b_1(\SP)-b_1(x,y_\circ)\right)t\right)^2}{12\kk_1^2\hypo^2(x,y_\circ)t^3}\\
&\ge \frac{1}{24\kk_1^2}\left(\frac{x-x_\circ+b_1(x,y_\circ)t}{\hypo^2(x,y_\circ)t^{3/2}}\right)^2-\frac{\left(b_1(\SP)-b_1(x,y_\circ)\right)^2}{12\kk_1^2\hypo^2(x,y_\circ)t}.\end{aligned}\end{equation}
We have several bounds on the last term.  Using Assumption \ref{A:boundedness}, we have
\begin{equation*}\left|b_1(\SP)-b_1(x,y_\circ)\right|\le 2\KK[E:boundedness]. \end{equation*}
Alternately, combining \eqref{E:Tboundsb1} and \eqref{E:case2}, we also have
\begin{equation} \label{E:heart}\begin{aligned} \left|b_1(\SP)-b_1(x,y_\circ)\right|
&\le \KK[E:hypobound]\exp\left[\KK[E:hypobound]\lb 2\KK[E:boundedness]\TT\rb \right]\hypo(x,y_\circ) |x-x_\circ|\\
&=\KK[E:hypobound]\kk_1\hypo(x,y_\circ)\left|x-x_\circ+b_1(x,y_\circ)t-b_1(x,y_\circ)t\right|\\
&\le \KK[E:hypobound]\kk_1\hypo(x,y_\circ)\lb \left|x-x_\circ+b_1(x,y_\circ)t\right|+\left|b_1(x,y_\circ)\right|t\rb \\
&\le \KK[E:hypobound]\kk_1\hypo(x,y_\circ)\left|x-x_\circ+b_1(x,y_\circ)t\right|+\KK[E:hypobound]\kk_1\KK[E:boundedness]\hypo(x,y_\circ)t. \end{aligned}\end{equation}
We thus have the two bounds (the second one depending on Lemma \ref{L:Young} and some algebraic rearrangement)
\begin{equation} \label{E:HousesoftheHoly}\begin{aligned}
\frac{\left(b_1(\SP)-b_1(x,y_\circ)\right)^2}{12\kk_1^2\hypo^2(x,y_\circ)t}
&\le \frac{4\KK[E:boundedness]^2}{12\kk_1^2\left(\hypo(x,y_\circ)\sqrt{t}\right)^2}
= \frac{\KK[E:boundedness]^2}{3\kk_1^2}\left(\hypo(x,y_\circ)\sqrt{t}\right)^{-2}\\
\frac{\left(b_1(\SP)-b_1(x,y_\circ)\right)^2}{12\kk_1^2\hypo^2(x,y_\circ)t}
&\le 2\KK[E:hypobound]^2\kk_1^2\hypo^2(x,y_\circ)\frac{\left(x-x_\circ+b_1(x,y_\circ)t\right)^2}{12\kk_1^2\hypo^2(x,y_\circ)t}
+2\KK[E:boundedness]^2\KK[E:hypobound]^2\kk_1^2\frac{\hypo^2(x,y_\circ)t^2}{12\hypo^2(x,y_\circ)t} \\
&\le \lb 8\KK[E:hypobound]^2\kk_1^2 t\rb \left(\hypo(x,y_\circ)\sqrt{t}\right)^2 \left(\frac{1}{48\kk_1^2}\right)\left(\frac{x-x_\circ+b_1(x,y_\circ)t}{\hypo(x,y_\circ)t^{3/2}}\right)^2
+\tfrac16\KK[E:boundedness]^2\KK[E:hypobound]^2\kk_1^2t \\
&\le \lb 8\KK[E:hypobound]^2\kk_1^2 \TT\rb \left(\hypo(x,y_\circ)\sqrt{t}\right)^2 \left(\frac{1}{48\kk_1^2}\right)\left(\frac{x-x_\circ+b_1(x,y_\circ)t}{\hypo(x,y_\circ)t^{3/2}}\right)^2
+\tfrac16\KK[E:boundedness]^2\KK[E:hypobound]^2\kk_1^2\TT. 
\end{aligned}\end{equation}

\noindent $\bullet$ \emph{Case 2a}.  In addition to \eqref{E:case2}, we now assume that
\begin{equation*} \hypo(x,y_\circ)\sqrt{t} < \frac{1}{\sqrt{8\KK[E:hypobound]^2\kk_1^2 \TT}}. \end{equation*}
Combining \eqref{E:physicalgraffiti} and the second part of \eqref{E:HousesoftheHoly}, we have
\begin{equation*}\frac{1}{12}\left(\frac{x-x_\circ+b_1(\SP)t}{\hypo(\SP)t^{3/2}}\right)^2
\ge \frac{1}{48\kk_1^2}\left(\frac{x-x_\circ+b_1(\SP)t}{\hypo(x.y_\circ)t^{3/2}}\right)^2-\tfrac16\KK[E:boundedness]^2\KK[E:hypobound]^2\kk_1^2\TT.\end{equation*}
Again using \eqref{E:journey}, we have that
\begin{align*}&\frac{1}{\hypo(\SP)t^{3/2}}\exp\left[-\frac{\left(x-x_\circ+b_1(\SP)t\right)^2}{12\hypo^2(\SP) t^3}\right]\\
&\qquad \le \frac{\kk_1\exp\left[\tfrac16\KK[E:boundedness]^2\KK[E:hypobound]^2\kk_1^2\TT\right]}{\hypo(x,y_\circ)t^{3/2}}\exp\left[-\frac{1}{48\kk_1^2}\left(\frac{x-x_\circ+b_1(\SP)t}{\hypo(x.y_\circ)t^{3/2}}\right)^2\right]
\end{align*}
which implies the claim in this case.

\noindent $\bullet$ \emph{Case 2b}.  In addition to \eqref{E:case2}, we now assume that
\begin{equation*} \hypo(x,y_\circ)\sqrt{t} \ge \frac{1}{\sqrt{8\KK[E:hypobound]^2\kk_1^2 \TT}}. \end{equation*}
Combining \eqref{E:physicalgraffiti} and the first part of \eqref{E:HousesoftheHoly}, we have
\begin{equation*}\frac{1}{12}\left(\frac{x-x_\circ+b_1(\SP)t}{\hypo(\SP)t^{3/2}}\right)^2
\ge \frac{1}{24\kk_1^2}\left(\frac{x-x_\circ+b_1(\SP)t}{\hypo(x.y_\circ)t^{3/2}}\right)^2-\frac{\KK[E:boundedness]^2}{3\kk_1^2}\left(8\KK[E:hypobound]^2\kk_1^2 \TT\right).\end{equation*}
Again using \eqref{E:journey}, we have that
\begin{align*}&\frac{1}{\hypo(\SP)t^{3/2}}\exp\left[-\frac{\left(x-x_\circ+b_1(\SP)t\right)^2}{12\hypo^2(\SP) t^3}\right]\\
&\qquad \le \frac{\kk_1\exp\left[\tfrac{8}{3} \KK[E:boundedness]^2\KK[E:hypobound]^2\TT\right]}{\hypo(x,y_\circ)t^{3/2}}\exp\left[-\frac{1}{24\kk_1^2}\left(\frac{x-x_\circ+b_1(\SP)t}{\hypo(x.y_\circ)t^{3/2}}\right)^2\right]
\end{align*}
which implies the claim in this final case.
\end{proof}

The definition \eqref{E:kernelboundintegrableDef} suggests several transformations.
\begin{remark}\label{R:transformationone}
For fixed $(t,x,y)\in \openstrip$, the change of variables
\begin{equation*} (u_\circ,v_\circ)=\left(x-x_\circ,\frac{y-y_\circ}{t^{1/2}}\right) \end{equation*}
from $(x_\circ,y_\circ)$ to $(u_\circ,v_\circ)$ has inverse
\begin{equation*} (x_\circ,y_\circ) = \left(x-u_\circ,y-t^{1/2}v_\circ\right) \end{equation*}
which has Jacobian $dx_\circ\, dy_\circ = t^{1/2}du_\circ\, dv_\circ$.
We note that
\begin{equation*}t^{1/2}\kernelboundintegrable_{x-u_\circ,y-t^{1/2}v_\circ}(t,x,y)\bOne_{\{|u_\circ|\ge 2\KK[E:boundedness]\TT\}}\le \frac{t^{3/2}}{u_\circ^2+1}\exp\left[-\frac{v_\circ^2}{12}\right]. \end{equation*}
We also have that
\begin{equation} \label{E:integrabilityone}\int_{(u_\circ,v_\circ)\in \RR}\frac{t^{3/2}}{u_\circ^2+1}\exp\left[-\frac{v_\circ^2}{12}\right]du_\circ\, dv_\circ = t^{3/2}\pi\sqrt{12\pi}. \end{equation}
\end{remark}

\begin{remark}\label{R:transformationtwo}
For fixed $(t,x,y)\in \openstrip$, the change of variables
\begin{equation*} (u_\circ,v_\circ) = \left(\frac{x-x_\circ+b_1(x,y_\circ)t}{\hypo(x,y_\circ)t^{3/2}},\frac{y-y_\circ}{t^{1/2}}\right) \end{equation*}
from $(x_\circ,y_\circ)$ to $(u_\circ,v_\circ)$ has inverse
\begin{equation*} (x_\circ,y_\circ)=\left(x+\zeta_1(t,x,y,u_\circ,v_\circ),y-t^{1/2}v_\circ\right) \end{equation*}
where
\begin{equation}\label{E:zetadef} \zeta_1(t,x,y,u_\circ,v_\circ)=b_1\left(x,y-t^{1/2}v_\circ\right)t- \hypo\left(x,y-t^{1/2}v_\circ\right)t^{3/2}u_\circ. \end{equation}
The determinant of the Jacobian of this inverse transformation \textup{(}which determines the transformation of volume elements\textup{)} is
\begin{equation*} dx_\circ\, dy_\circ = \left|\det \begin{pmatrix} -\hypo(x,y-t^{1/2}v_\circ)t^{3/2} & \sfrac{\partial \zeta_1}{\partial v_\circ}(t,x,y,u_\circ,v_\circ) \\ 0 & -t^{1/2}\end{pmatrix}\right|du_\circ\, dv_\circ = t^2\hypo\left(x,y-t^{1/2}v_\circ \right) du_\circ\, dv_\circ. \end{equation*}
We note that
\begin{equation*}t^2\hypo\left(x,y-t^{1/2}v_\circ \right)\kernelboundintegrable_{x+\zeta_1(t,x,y,u_\circ,v_\circ),y-t^{1/2}v_\circ}(t,x,y)\bOne_{\{|\zeta_1(t,x,y,u_\circ,v_\circ)|< 2\KK[E:boundedness]\TT\}}\le \exp\left[-\frac{u_\circ^2}{\KK[E:Kashmir]}-\frac{v_\circ^2}{12}\right]. \end{equation*}
We also have that
\begin{equation} \label{E:integrabilitytwo}\int_{(u_\circ,v_\circ)\in \RR}\exp\left[-\frac{u_\circ^2}{\KK[E:Kashmir]}-\frac{v_\circ^2}{12}\right]du_\circ\, dv_\circ = \sqrt{\KK[E:Kashmir]\pi}\sqrt{12\pi}. \end{equation}
\end{remark}
\begin{proposition}\label{P:integrablekernelbound}
We have that
\begin{equation}\label{E:integrablekernelbound} \KK[E:integrablekernelbound]\Def \sup_{(t,x,y)\in \openstrip}\int_{(\SP)\in \RR}\kernelboundintegrable_{\SP}(t,x,y)dx_\circ\, dy_\circ \end{equation}
is finite.\end{proposition}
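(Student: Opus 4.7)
The plan is to split the integral $\int_{(\SP)\in\RR}\kernelboundintegrable_{\SP}(t,x,y)\,dx_\circ\,dy_\circ$ along the two indicator cases in the definition \eqref{E:kernelboundintegrableDef} and handle each piece by the change of variables set up in Remarks \ref{R:transformationone} and \ref{R:transformationtwo}, which were designed precisely so that the Jacobians cancel the singular $t$ and $\hypo$ factors in $\kernelboundintegrable$.

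\textbf{Tail piece} $\bigl(|x-x_\circ|\ge 2\KK[E:boundedness]\TT\bigr)$. I would apply the substitution $(u_\circ,v_\circ)=(x-x_\circ,(y-y_\circ)/t^{1/2})$ from Remark \ref{R:transformationone}. The Jacobian $t^{1/2}\,du_\circ\,dv_\circ$ exactly cancels the $1/\sqrt{t}$ in the $y$-Gaussian, and the pointwise bound noted in that remark gives an integrand dominated by $\tfrac{t^{3/2}}{u_\circ^2+1}\exp[-v_\circ^2/12]$. By the explicit evaluation \eqref{E:integrabilityone}, this contribution is at most $t^{3/2}\pi\sqrt{12\pi}\le \TT^{3/2}\pi\sqrt{12\pi}$, uniformly in $(t,x,y)$.

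\textbf{Bulk piece} $\bigl(|x-x_\circ|<2\KK[E:boundedness]\TT\bigr)$. I would apply the substitution $(u_\circ,v_\circ)=\bigl(\tfrac{x-x_\circ+b_1(x,y_\circ)t}{\hypo(x,y_\circ)t^{3/2}},\tfrac{y-y_\circ}{t^{1/2}}\bigr)$ from Remark \ref{R:transformationtwo}. Here Assumption \ref{A:basics} (which gives $\hypo>0$ everywhere) ensures that for each fixed $y_\circ$ the map $x_\circ\mapsto u_\circ$ is a bijection of $\R$, so the change of variables is valid; the Jacobian $t^2\hypo(x,y-t^{1/2}v_\circ)\,du_\circ\,dv_\circ$ exactly cancels the $\frac{1}{\hypo(x,y_\circ)t^{3/2}}\cdot\frac{1}{\sqrt{t}}$ prefactor in the integrand. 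Dropping the indicator $\bOne_{\{|\zeta_1|<2\KK[E:boundedness]\TT\}}$ for an upper bound, the transformed integrand is dominated by $\exp[-u_\circ^2/\KK[E:Kashmir]-v_\circ^2/12]$, whose integral \eqref{E:integrabilitytwo} is $\sqrt{\KK[E:Kashmir]\pi}\sqrt{12\pi}$, again independent of $(t,x,y)$.

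Adding the two uniform bounds yields a constant upper bound on the integral and hence on the supremum in \eqref{E:integrablekernelbound}. There is no real obstacle here; the two remarks do essentially all of the analytic work, and the only point to verify carefully is the Jacobian calculation in the bulk case together with the cancellation of $\hypo$ and powers of $t$, which relies on $\hypo$ being strictly positive so that the change of variables is a genuine diffeomorphism in the $x_\circ$ variable.
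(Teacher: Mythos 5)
Your proposal is correct and follows essentially the same route as the paper: split along the two indicators in \eqref{E:kernelboundintegrableDef}, apply the changes of variables of Remarks \ref{R:transformationone} and \ref{R:transformationtwo} with their stated pointwise bounds, and conclude from \eqref{E:integrabilityone} and \eqref{E:integrabilitytwo}. No gaps to report.
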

\begin{proof}
Fix $(t,x,y)\in \openstrip$.  From Remarks \ref{R:transformationone} and \ref{R:transformationtwo},
\begin{align*}
&\int_{(x_\circ,y_\circ)\in \RR}\kernelboundintegrable_{\SP}(t,x,y) \bOne_{\{|x-x_\circ|\ge 2\KK[E:boundedness]\TT\}}dx_\circ\, dy_\circ \\
&\qquad = \int_{(u_\circ,v_\circ)\in \RR}t^{1/2}\kernelboundintegrable_{x-u_\circ,y-t^{1/2}v_\circ}(t,x,y)\bOne_{\{|u_\circ|\ge 2\KK[E:boundedness]\TT\}}du_\circ\, dv_\circ \\
&\int_{(x_\circ,y_\circ)\in \RR}\kernelboundintegrable_{\SP}(t,x,y) \bOne_{\{|x-x_\circ|< 2\KK[E:boundedness]\TT\}}dx_\circ\, dy_\circ\\
&\qquad = \int_{(u_\circ,v_\circ)\in \RR} t^2\hypo\left(x,y-t^{1/2}v_\circ \right)\kernelboundintegrable_{x+\zeta_1(t,x,y,u_\circ,v_\circ),y-t^{1/2}v_\circ}(t,x,y)\bOne_{\{|\zeta_1(t,x,y,u_\circ,v_\circ)|< 2\KK[E:boundedness]\TT\}}du_\circ\, dv_\circ.
\end{align*}
The claim follows from \eqref{E:integrabilityone} and \eqref{E:integrabilitytwo}.
\end{proof}

We also have that $\kernel$ approaches a Dirac measure in small time.
\begin{proposition}\label{P:dirac}
Fix $g\in C_b([0,\TT)\times \Rplusint\times \R)$.  For any $(x^*,y^*)\in \bdy_i$, we have that
\begin{equation*} \lim_{\substack{(t,x,y)\to (0,x^*,y^*) \\ (t,x,y)\in \domint}}\int_{(\SP)\in \bdy_i}\kernel_{\SP}(t,x,y)g(t,\SP)dx_\circ\, dy_\circ = g(0,x^*,y^*). \end{equation*}
\end{proposition}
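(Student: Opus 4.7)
The plan is to apply the change of variables from Remark \ref{R:transformationtwo}, under which $\kernel_{\SP}(t,x,y)\,dx_\circ\,dy_\circ$ becomes (modulo an asymptotically trivial Jacobian ratio) an ordinary bivariate Gaussian density in the new coordinates $(u_\circ,v_\circ)$. Passing to the limit under the integral by dominated convergence then gives $g(0,x^*,y^*)$ times the total mass of this limiting Gaussian, which is $1$.

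\textbf{Change of variables and pointwise limit.} Fix $(x^*,y^*)\in\bdy_i$ and consider $(t,x,y)\to(0,x^*,y^*)$ with $(t,x,y)\in\domint$; since $x^*>0$ we may assume $x\ge x^*/2$. The substitution of Remark \ref{R:transformationtwo} rewrites the integral as
\[
\int_{\RR}\frac{\sqrt{12}}{2\pi}\,\frac{\hypo\!\left(x,y-t^{1/2}v_\circ\right)}{\hypo(\SP)}\,e^{-\cE_{\SP}(t,x,y)}\,g(t,\SP)\,\bOne_{\{x_\circ>0\}}\,du_\circ\,dv_\circ,
\]
where $(\SP)$ is the inverse image of $(u_\circ,v_\circ)$ given by $y_\circ=y-t^{1/2}v_\circ$ and $x_\circ=x+\zeta_1(t,x,y,u_\circ,v_\circ)$ with $\zeta_1$ as in \eqref{E:zetadef}. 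For each fixed $(u_\circ,v_\circ)$: (i) $\zeta_1=O(t)+O(t^{3/2}|u_\circ|)\to 0$, so $(\SP)\to(x^*,y^*)$; (ii) by \eqref{E:hypoboundsandwich} the Jacobian ratio tends to $1$; (iii) writing
\[
\YY_{\SP}(t,y)=v_\circ+b_2(\SP)t^{1/2},\qquad \XX_{\SP}(t,x)=\frac{\hypo(x,y_\circ)}{\hypo(\SP)}\,u_\circ+\frac{b_1(\SP)-b_1(x,y_\circ)}{\hypo(\SP)t^{1/2}}-\tfrac12 b_2(\SP)t^{1/2},
\]
the Taylor estimate \eqref{E:Tboundsb1} with $d=1$ applied at base $(x,y_\circ)$ and evaluation $\SP$ shows the middle term in $\XX_{\SP}$ is $O(t^{1/2}+t|u_\circ|)\to 0$, so $\XX_{\SP}\to u_\circ$, $\YY_{\SP}\to v_\circ$, and $\cE_{\SP}\to 6u_\circ^2+6u_\circ v_\circ+2v_\circ^2$; (iv) by continuity of $g$, $g(t,\SP)\to g(0,x^*,y^*)$; (v) $\bOne_{\{x_\circ>0\}}\to 1$ since $x^*>0$.

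\textbf{Domination and conclusion.} In the new coordinates $\yy_{\SP}=v_\circ$, and repeating the Case~2a/2b analysis from the proof of Proposition \ref{P:integrablekernel} (with base point $(x,y_\circ)$ and evaluation point $\SP$) yields constants $c,C>0$, depending only on the structural parameters, such that $\xx_{\SP}^2\ge c\,u_\circ^2-C$ uniformly for $(t,x,y)$ in a neighbourhood of $(0,x^*,y^*)$. Combining this with Lemma \ref{L:KandhatK}, the Jacobian bound $\hypo(x,y-t^{1/2}v_\circ)/\hypo(\SP)\le \exp[\KK[E:hypobound]\KK[E:boundedness](t+t^{3/2}|u_\circ|)]$ from \eqref{E:hypoboundsandwich}, and $\|g\|<\infty$, the integrand is dominated by an integrable Gaussian in $(u_\circ,v_\circ)$. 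Dominated convergence together with
\[
\int_{\RR}\frac{\sqrt{12}}{2\pi}\,e^{-6u_\circ^2-6u_\circ v_\circ-2v_\circ^2}\,du_\circ\,dv_\circ=1
\]
(the normalization of the bivariate Gaussian density with precision matrix $\left(\begin{smallmatrix}12 & 6\\6 & 4\end{smallmatrix}\right)$ of determinant $12$) yields the claim.

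\textbf{Main obstacle.} The essential technical point is the identification $\XX_{\SP}\to u_\circ$: the apparent $t^{-1/2}$ singularity multiplying $b_1(\SP)-b_1(x,y_\circ)$ is cancelled only because \eqref{E:Tboundsb1} provides a Taylor remainder bound scaled by $\hypo(x,y_\circ)$, which is precisely the content of Assumption \ref{A:hypobound}.
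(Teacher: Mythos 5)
Your route is essentially the paper's: the substitution of Remark \ref{R:transformationtwo}, dominated convergence, and the normalization \eqref{E:unitintegral}; your pointwise computation ($\zeta_1\to 0$, $\XX_{\SP}\to u_\circ$, $\YY_{\SP}\to v_\circ$, Jacobian ratio $\to 1$) is exactly what the paper does. The gap is in the domination step. Because you apply the change of variables to the \emph{whole} integral over $\bdy_i$, your dominating function must be valid for all $(u_\circ,v_\circ)$, in particular in the region where $|x-x_\circ|=|\zeta_1(t,x,y,u_\circ,v_\circ)|\ge 2\KK[E:boundedness]\TT$, which is reached inside the domain of integration for every fixed $t>0$ (as soon as $\hypo(x,y_\circ)t^{3/2}|u_\circ|$ is of order one). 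But the Case 2a/2b analysis you invoke is carried out in the paper only under the Case 2 hypothesis $|x-x_\circ|<2\KK[E:boundedness]\TT$: both the comparison \eqref{E:journey} of $\hypo(x_\circ,y_\circ)$ with $\hypo(x,y_\circ)$ and the Taylor bound \eqref{E:heart} carry the factor $\exp\left[\KK[E:hypobound]\,|x-x_\circ|\right]$, which is a fixed constant only on that set. Off that set, the error term $\left(b_1(\SP)-b_1(x,y_\circ)\right)/\left(\hypo(\SP)t^{1/2}\right)$ in your expression for $\XX_{\SP}$ is no longer controlled by a fixed multiple of $|u_\circ|$ plus a constant, so the claimed bound $\xx^2_{\SP}\ge c\,u_\circ^2-C$ with constants uniform near $(0,x^*,y^*)$ is not delivered by the cited argument; likewise the Jacobian ratio $\hypo(x,y_\circ)/\hypo(\SP)$ is only bounded by $e^{\KK[E:hypobound]|\zeta_1|}$, which is unbounded there. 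So the single integrable dominating function you need is not established.

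This is precisely why the paper first splits the integral as in \eqref{E:decomp}: on $\{|x-x_\circ|\ge 2\KK[E:boundedness]\TT\}$ it switches to the substitution of Remark \ref{R:transformationone} and Case 1 of Proposition \ref{P:integrablekernel} (the Gaussian in the $x$-variable is traded, via \eqref{E:expbound}, for the bound $t^{3/2}/(|x-x_\circ|^2+1)$), so that this piece is $O(t^{3/2})$ and disappears in the limit \eqref{E:nullpart}; only on $\{|x-x_\circ|<2\KK[E:boundedness]\TT\}$ does it run the argument you wrote. Your proof becomes correct once you add such a treatment of the far region (or a genuinely uniform far-region estimate, which would require new work beyond Proposition \ref{P:integrablekernel}). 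A minor side remark: for the \emph{pointwise} statement $\XX_{\SP}\to u_\circ$, the $\hypo$-weighted remainder \eqref{E:Tboundsb1} is not what saves the $t^{-1/2}$; boundedness of $\partial b_1/\partial x$ together with $\hypo(x^*,y^*)>0$ and $t^{-1/2}\zeta_1\to 0$ already suffices. Assumption \ref{A:hypobound} is what matters for making the bounds uniform in the domination step, and there only on the near region.
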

\begin{proof}  Based on the definition \eqref{E:kernelboundintegrableDef} of $\kernelboundintegrable$, let's write
\begin{equation} \label{E:decomp}\begin{aligned}
    &\int_{(\SP)\in \bdy_i}\kernel_{\SP}(t,x,y)g(t,\SP)dx_\circ\, dy_\circ\\
    &\qquad=\int_{(\SP)\in \bdy_i}\bOne_{\{|x-x_\circ|\ge  2\KK[E:boundedness]\TT\}}\kernel_{\SP}(t,x,y)g(t,\SP)dx_\circ\, dy_\circ\\
    &\qquad \qquad +\int_{(\SP)\in \bdy_i}\bOne_{\{|x-x_\circ|<  2\KK[E:boundedness]\TT\}}\kernel_{\SP}(t,x,y)g(t,\SP)dx_\circ\, dy_\circ. 
\end{aligned}\end{equation}

Using Remark \ref{R:transformationone}, we have
\begin{align*} &\left|\int_{(\SP)\in \bdy_i}\bOne_{\{|x-x_\circ|\ge  2\KK[E:boundedness]\TT\}}\kernel_{\SP}(t,x,y)g(t,\SP)dx_\circ\, dy_\circ\right|\\
&\qquad \le \KK[E:KandhatKConst]\KK[E:integrablekernel]\|g\| \int_{(u_\circ,v_\circ)\in \RR}t^{1/2}\kernelboundintegrable_{x-u_\circ,y-t^{1/2}v_\circ}(t,x,y)\bOne_{\{|u_\circ|\ge 2\KK[E:boundedness]\TT\}}du_\circ\, dv_\circ\\
&\qquad \le \KK[E:KandhatKConst]\KK[E:integrablekernel]\|g\| t^{3/2}\pi\sqrt{12\pi},\end{align*}
implying that
\begin{equation}\label{E:nullpart} \lim_{\substack{(t,x,y)\to (0,x^*,y^*)\\
(t,x,y)\in \domint}}\int_{(\SP)\in \bdy_i}\bOne_{\{|x-x_\circ|\ge  2\KK[E:boundedness]\TT\}}\kernel_{\SP}(t,x,y)g(t,\SP)dx_\circ\, dy_\circ=0.\end{equation}

Let's use Remark \ref{R:transformationtwo} to understand the asymptotics of the second term on the right of \eqref{E:decomp}.  We have that
\begin{equation*} \int_{(\SP)\in \bdy_i}\bOne_{\{|x-x_\circ|<  2\KK[E:boundedness]\TT\}}\kernel_{\SP}(t,x,y)g(t,\SP,x,y)dx_\circ\, dy_\circ
= \int_{(u_\circ,v_\circ)\in \RR}\Phi_g(t,x,y,u_\circ,v_\circ)du_\circ\, dv_\circ \end{equation*}
where
\begin{align*} \Phi_g(t,x,y,u_\circ,v_\circ)&=t^2\hypo\left(x,y-t^{1/2}v_\circ \right)\kernel_{x+\zeta_1(t,x,y,u_\circ,v_\circ),y-t^{1/2}v_\circ}(t,x,y)\bOne_{\{|\zeta_1(t,x,y,u_\circ,v_\circ)|<2\KK[E:boundedness]\TT\}}\\
&\qquad \times \bOne_{\{x+\zeta_1(t,x,y,u_\circ,v_\circ)>0\}}g\left(t,x+\zeta_1(t,x,y,u_\circ,v_\circ),y-t^{1/2}v_\circ\right). \end{align*}
From Proposition \ref{P:integrablekernel} and recalling \eqref{E:kernelmonotonicity}, we have that
\begin{align*} &\left|\Phi_g(t,x,y,u_\circ,v_\circ)\right|\\
&\qquad \le \KK[E:KandhatKConst]\|g\|t^2\hypo\left(x,y-t^{1/2}v_\circ \right)\KK[E:integrablekernel]\kernelboundintegrable_{x+\zeta_1(t,x,y,u_\circ,v_\circ),y-t^{1/2}v_\circ}(t,x,y)\bOne_{\{|\zeta(t,x,y,u_\circ,v_\circ)|<2\KK[E:boundedness]\TT\}}\\
&\qquad \le \KK[E:KandhatKConst]\KK[E:integrablekernel]\|g\|\exp\left[-\tfrac{1}{\KK[E:Kashmir]}u_\circ^2-\tfrac{1}{12}v_\circ^2\right].
\end{align*}
Using \eqref{E:integrabilitytwo}, we can appeal to the dominated convergence theorem.

Explicitly,
\begin{align*}
\Phi_g(t,x,y,u_\circ,v_\circ)
&=\bOne_{\{x+\zeta_1(t,x,y,u_\circ,v_\circ)>0\}}\bOne_{\{|\zeta_1(t,x,y,u_\circ,v_\circ)|<2\KK[E:boundedness]\TT\}}\\
&\qquad \times \frac{\hypo\left(x,y-t^{1/2}v_\circ \right)}{\hypo\left(x+\zeta_1(t,x,y,u_\circ,v_\circ),y-t^{1/2}v_\circ\right)}\\
&\qquad \times \frac{\sqrt{12}}{2\pi}\exp\left[-6\XX_{x+\zeta_1(t,x,y,u_\circ,v_\circ),y-t^{1/2}v_\circ}(t,x)\right.\\
&\qquad \qquad \left. -6\XX_{x+\zeta_1(t,x,y,u_\circ,v_\circ),y-t^{1/2}v_\circ}(t,x)\YY_{x+\zeta_1(t,x,y,u_\circ,v_\circ),y-t^{1/2}v_\circ}(t,y)\right.\\
&\qquad \qquad \left.-2\YY_{x+\zeta_1(t,x,y,u_\circ,v_\circ),y-t^{1/2}v_\circ}(t,y)\right].
\end{align*}

Let's now note from the explicit formula \eqref{E:zetadef} that
\begin{equation} \label{E:zetaasymp}\left|\zeta_1(t,x,y,u_\circ,v_\circ)\right|\le \KK[E:boundedness]t\lb 1+t^{1/2}|u_\circ|\rb \end{equation}
for all $(t,x,y)\in \domint$ and $(u_\circ,v_\circ)\in \RR$.
Fix now $(u_\circ,v_\circ)$ in $\RR$.  From \eqref{E:zetaasymp},
\begin{equation*} \lim_{\substack{(t,x,y)\to (0,x^*,y^*)\\
(t,x,y)\in \domint}}\bOne_{\{|\zeta_1(t,x,y,u_\circ,v_\circ)|<2\KK[E:boundedness]\TT\}}=1\qquad \text{and}\qquad 
\lim_{\substack{(t,x,y)\to (0,x^*,y^*)\\
(t,x,y)\in \domint}}\bOne_{\{x+\zeta_1(t,x,y,u_\circ,v_\circ)>0\}}=1. \end{equation*}

Then
\begin{equation} \label{E:zetasmall}\lim_{\substack{(t,x,y)\to (0,x^*,y^*)\\
(t,x,y)\in \domint}}\zeta_1(t,x,y,u_\circ,v_\circ)=0. \end{equation}
From \eqref{E:XXYYDef} and \eqref{E:xxyydef}, we also have that
\begin{align*} &\XX_{x+\zeta_1(t,x,y,u_\circ,v_\circ),y-t^{1/2}v_\circ}(t,x)\\
&\qquad =\frac{-b_1\left(x,y-t^{1/2}v_\circ\right)t+ \hypo\left(x,y-t^{1/2}v_\circ\right)t^{3/2}u_\circ+b_1\left(x+\zeta_1(t,x,y,u_\circ,v_\circ),y-t^{1/2}v_\circ\right)t}{\hypo\left(x+\zeta_1(t,x,y,u_\circ,v_\circ),y-t^{1/2}v_\circ\right)t^{3/2}}\\
&\qquad\qquad -\tfrac12b_2(x+\zeta_1(t,x,y,u_\circ,v_\circ),y-t^{1/2}v_\circ)t^{1/2}\\
&\qquad=\frac{\hypo(x,y-t^{1/2}v_\circ)}{\hypo\left(x+\zeta_1(t,x,y,u_\circ,v_\circ),y-t^{1/2}v_\circ\right)}u_\circ\\
&\qquad\qquad +\frac{1}{\hypo\left(x+\zeta_1(t,x,y,u_\circ,v_\circ),y-t^{1/2}v_\circ\right)}\\
&\qquad \qquad \qquad \times \int_{s=0}^1\frac{\partial b_1}{\partial x}\left(x+s\zeta_1(t,x,y,u_\circ,v_\circ),y-t^{1/2}v_\circ\right)t^{-1/2}\zeta_1(t,x,y,u_\circ,v_\circ)ds\\
&\qquad\qquad -\tfrac12b_2(x+\zeta_1(t,x,y,u_\circ,v_\circ),y-t^{1/2}v_\circ)t^{1/2}\\
&\YY_{x+\zeta_1(t,x,y,u_\circ,v_\circ),y-t^{1/2}v_\circ}(t,y)\\
&\qquad=v_\circ+b_2(x+\zeta_1(t,x,y,u_\circ,v_\circ),y-t^{1/2}v_\circ)t^{1/2}. \end{align*}
Note that \eqref{E:zetaasymp}
implies that 
\begin{equation*} \lim_{\substack{(t,x,y)\to (0,x^*,y^*)\\
(t,x,y)\in \domint}}t^{-1/2}\zeta_1(t,x,y,u_\circ,v_\circ)=0,\end{equation*}
which is in fact stronger than \eqref{E:zetasmall}.
Collecting things together and taking limits, we have 
\begin{equation*} \lim_{\substack{(t,x,y)\to (0,x^*,y^*)\\
(t,x,y)\in \domint}}\Phi_g(t,x,y,u_\circ,v_\circ)=\frac{\sqrt{12}}{2\pi}\exp\left[-6u_\circ-6u_\circ v_\circ -2v_\circ^2\right]g(0,x^*,y^*) \end{equation*}
for each $(u_\circ,v_\circ)\in \RR$.
Since
\begin{equation}\label{E:unitintegral} \int_{(u_\circ,v_\circ)\in \RR}\frac{\sqrt{12}}{2\pi}\exp\left[-\frac12 \begin{pmatrix} u_\circ & v_\circ\end{pmatrix}\begin{pmatrix} 12 & 6 \\ 6 & 4 \end{pmatrix}\begin{pmatrix} u_\circ \\ v_\circ \end{pmatrix}\right] du_\circ\, dv_\circ=1, \end{equation}
we indeed get that
\begin{equation}\label{E:nonnullpart}\lim_{\substack{(t,x,y)\to (0,x^*,y^*)\\
(t,x,y)\in \domint}}\int_{(u_\circ,v_\circ)\in \RR}\Phi_g(t,x,y,u_\circ,v_\circ)du_\circ dv_\circ=g(0,x^*,y^*).\end{equation}

Combine \eqref{E:decomp}, \eqref{E:nullpart}, and \eqref{E:nonnullpart} to get the claim.
\end{proof}

\section{Sides}
Let's next understand $\sidekernel$ of \eqref{E:sidekernelDef}.  Some of our calculations will parallel the calculations of Section \ref{S:Parametrix}.  Since these calculations are specifically oriented towards the side boundary of $\dom$, we restrict $(t,x,y)$ to be in $\domplus$ (see \cite{anceschi2019survey} for 

\begin{remark}[Laplace asymptotic arguments]
Let's understand the main idea of some asymptotic arguments by simplifying $ds\, dy_\circ$ integral of $\kernel_{0,y_\circ}(s,x,y)$ \textup{(}i.e, the integral of $\kernel$ against the side boundary of $\dom$\textup{)}.  Fix $g\in C_b([0,1])$ and consider the integral
\begin{equation} \label{E:coreI} \int_{s=0}^1 s^{-3/2}\exp\left[-\frac{(x-s)^2}{s^3}\right]g(s)ds. \qquad x>0 \end{equation}
Informally, the integrand corresponds to replacing $b_1$ by $-1$ and $\hypo$ by $1$ in $\kernel_{0,y_\circ}(t,x,y)$ and then integrating out \textup{(}i.e., suppressing the dependence on\textup{)} $y_\circ$.  We are interested in behavior of \eqref{E:coreI} as $x\searrow 0$.

The theory of Laplace-type asymptotics suggest that we start by looking at the minimum of
\begin{equation*} t\mapsto \frac{(x-s)^2}{s^3} \end{equation*}
which is, of course $x$.  For $s$ near $x$, the $ds$ integral should look approximately Gaussian.  Of course for $s$ near $x$, we should also have $s^3\approx x^3$.  As $x\searrow 0$, we might more specifically look at $s\in (x/2,2x)$, which is a small neighborhood of $x$ which scales like $x$.  We might expect that
\begin{align*} &\int_{s=x/2}^{2x}s^{-3/2}\exp\left[-\frac{(x-s)^2}{s^3}\right]g(s)ds
\approx \int_{s=x/2}^{2x}x^{-3/2}\exp\left[-\frac{(x-s)^2}{x^3}\right]g(s)ds\\
&\approx \int_{r=- x^{-1/2}}^{\tfrac12x^{-1/2}}e^{-r^2}g(x-x^{3/2}r)dr\approx \sqrt{\pi} g(x) \approx \sqrt{\pi} g(0) \end{align*}
as $x\searrow 0$.  

If $s>2x$, then $s$ is \lq\lq large" compared to $x$, so we might expect that
\begin{align*} &\int_{s=2x}^1 s^{-3/2}\exp\left[-\frac{(x-s)^2}{s^3}\right]g(s)ds
\approx \int_{s=2x}^1 s^{-3/2}\exp\left[-\frac{s^2}{s^3}\right]g(s)ds \\
&\qquad \approx \int_{s=2x}^1 s^{-3/2}e^{1/2}g(s)ds 
\approx \int_{s=0}^1 s^{-3/2}e^{-1/s}g(s)ds \end{align*}
as $x\searrow 0$. We note that $s\mapsto s^{-3/2}e^{-1/2}$ is bounded on $(0,\infty)$.  See also Remark \ref{R:McKean}.

If $s<x/2$, then $x>2s$ and $x$ is \lq\lq large" compared to $s$.  The estimates of Proposition \ref{P:sideintegrablekernel} will imply that 
\begin{equation*} \sup_{\substack{ 0<s < x/2 \\ x>0}}s^{-3/2}\exp\left[-\frac{x^2}{s^3}\right]<\infty. \end{equation*}
This implies that
\begin{equation*}\lim_{x\searrow 0}\int_{s=0}^{x/2} s^{-3/2}\exp\left[-\frac{(x-s)^2}{s^3}\right]g(s)ds
=0.\end{equation*}

Collecting our estimates together, we might expect that 
\begin{equation}\label{E:newidea}
    \int_{s=0}^1 s^{-3/2}\exp\left[-\frac{(x-s)^2}{s^3}\right]g(s)ds
    \approx \sqrt{\pi} g(0) + \int_{s=0}^1 s^{-3/2}e^{-1/s}g(s)ds
\end{equation}
as $x\searrow 0$.

Informally, the integrand of \eqref{E:coreI} looks \textup{(}as $x\searrow 0$\textup{)} like the sum of a Dirac measure at $x$ and an integral against a bounded function.  See Figure \ref{fig:Laplace}.  Our estimates about integrals against $\sidekernel$ will make the equivalent of these calculations rigorous.

Several other asymptotics of integrals of the McKean distribution are given in \cite{goldman1971first}.

\begin{figure}[ht] \includegraphics[width=0.4\textwidth]{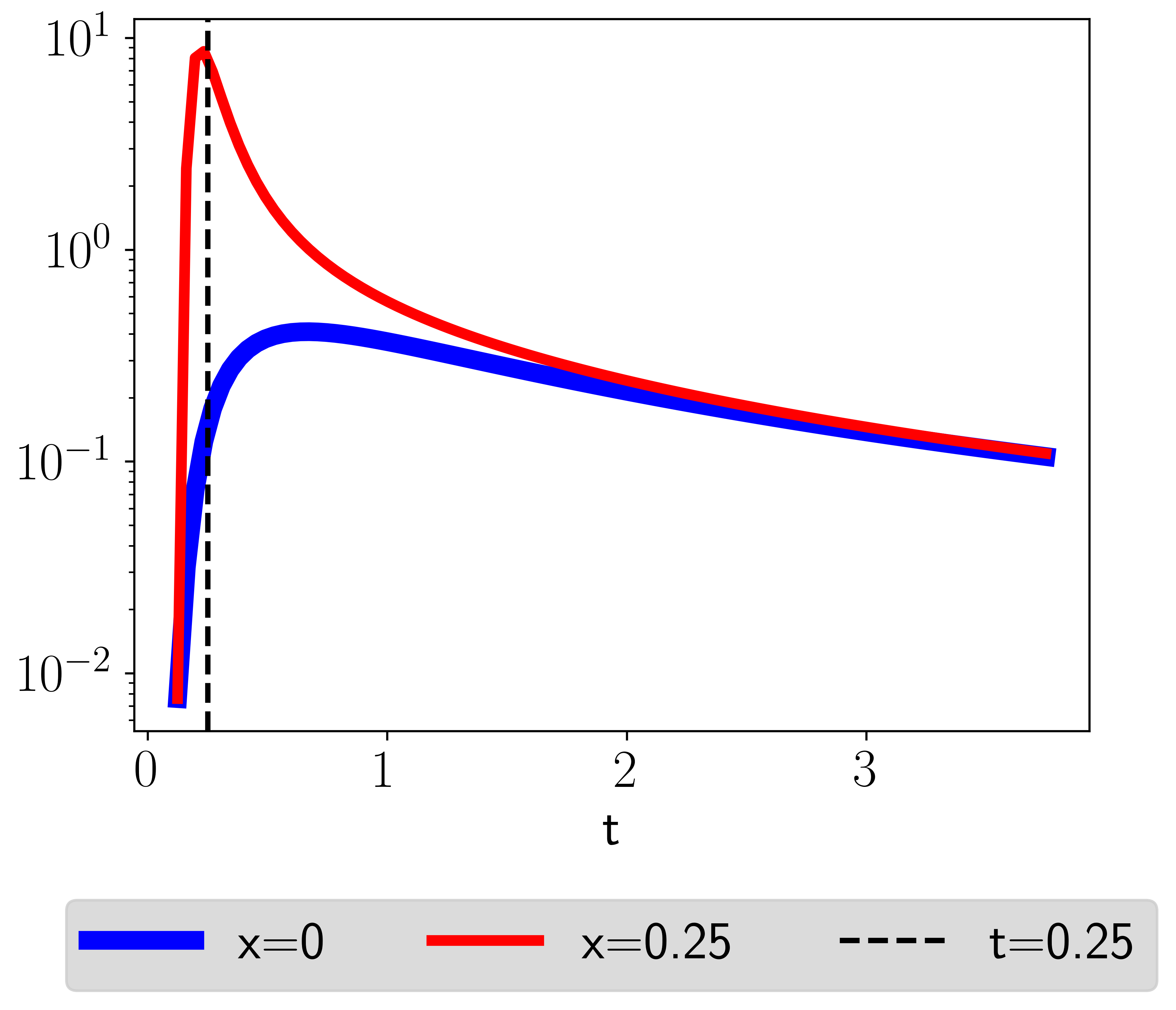}\caption{Integrand of \eqref{E:coreI}}\label{fig:Laplace}
\end{figure}
\end{remark}

\begin{remark}
The central calculation for a standard double-layer potential is 
\begin{equation*} \lim_{x\searrow 0}\int_{s=0}^1 \frac{x}{\sqrt{2\pi s^3}}\exp\left[-\frac{x^2}{2s}\right]g(s)ds = g(0) \end{equation*}
for any $g\in C_b([0,1])$ \textup{(}see also \cite{moss1989boundary}\textup{)}. 
The asymptotics of \eqref{E:newidea} thus contain a term not present in standard diffusive boundary potential theory.  A novel term \textup{(}the second term in the recursion for $\tilde \psi^\bdy_{n+1}$ of \eqref{E:explicitpsi}\textup{)} will consequently appear in the Volterra recursion for the solution of \eqref{E:MainPDE}.
\end{remark}

Let's start our formal analysis with an analogue of \eqref{E:kernelboundintegrableDef}. 
For $y_\circ\in \R$ and $(t,x,y)\in \domplus$, let's here define
\begin{equation}\label{E:sidekernelboundintegrableDef} \begin{aligned} \sidekernelboundintegrable_{y_\circ}(t,x,y)
&\Def \lb \bOne_{\{t\le  \sfrac{x}{2\KK[E:boundedness]}\}}
+\bOne_{\{t\ge  \sfrac{2x}{\ub}\}}
\rb \frac{1}{\sqrt{t}}\exp\left[-\frac{(y-y_\circ)^2}{12t}\right]\\
&\qquad +\bOne_{\{\sfrac{x}{2\KK[E:boundedness]}< t< \sfrac{2x}{\ub}\}}\frac{|b_1(0,y_\circ)|}{\hypo(0,y_\circ)x^{3/2}}\exp\left[-\frac{\ub^3}{96}\frac{\left(x+b_1(0,y_\circ)t\right)^2}{\hypo^2(0,y_\circ) x^3}\right]\\
&\qquad \qquad \times \frac{1}{\sqrt{x}}\exp\left[-\frac{\ub}{24}\frac{(y-y_\circ)^2}{x}\right].
\end{aligned}\end{equation}
\begin{proposition}\label{P:sideintegrablekernel}  There is a $\KK[E:sideintegrablekernel]>0$ such that
\begin{equation} \label{E:sideintegrablekernel}
\kernelbound{\sfrac{1}{12}}_{0,y_\circ}(t,x,y)\le \KK[E:sideintegrablekernel]\sidekernelboundintegrable_{y_\circ}(t,x,y)\end{equation}
for all $y_\circ\in \R$ and $(t,x,y)\in \domplus$.\end{proposition}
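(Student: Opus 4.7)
The idea is to bound $\kernelbound{\sfrac{1}{12}}_{0,y_\circ}$ case-by-case in three regimes that exactly match the three pieces of $\sidekernelboundintegrable_{y_\circ}$. The characteristic time $t \sim x/|b_1(0,y_\circ)|$ where $x+b_1(0,y_\circ)t \approx 0$ is where the $x$-exponent of $\kernelbound{\sfrac{1}{12}}_{0,y_\circ}$ is smallest; the thresholds $t=x/(2\KK[E:boundedness])$ and $t=2x/\ub$ bracket this characteristic time since $\ub \le |b_1(0,y_\circ)| \le \KK[E:boundedness]$. Inside the bracket (Case 2) we retain a Gaussian-type bound with $t$ replaced by the comparable quantity $x$; outside (Cases 1 and 3) we absorb the $t^{-2}$ prefactor into the exponential by means of \eqref{E:expbound}.

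The $y$-factor of $\kernelbound{\sfrac{1}{12}}_{0,y_\circ}$ is $t^{-1/2}\exp[-(y-y_\circ)^2/(12t)]$, which is exactly the $y$-factor appearing in the first and third terms of $\sidekernelboundintegrable_{y_\circ}$; in Case~2 the inequality $t<2x/\ub$ gives $\tfrac{1}{12t}\ge\tfrac{\ub}{24x}$, so $\exp[-(y-y_\circ)^2/(12t)]\le\exp[-\ub(y-y_\circ)^2/(24x)]$ and the $y$-dimension is dispatched uniformly. The real work is thus in bounding the $x$-dependence.

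In Case~1 ($t \le x/(2\KK[E:boundedness])$), the bound $|b_1(0,y_\circ)|\le \KK[E:boundedness]$ forces $|b_1(0,y_\circ)t|\le x/2$, so $(x+b_1(0,y_\circ)t)^2 \ge x^2/4$ and $\tfrac{1}{12}\xx^2_{0,y_\circ}(t,x) \ge x^2/(48\hypo^2(0,y_\circ)t^3)$. Applying \eqref{E:expbound} with $p=2$ and using $x \ge 2\KK[E:boundedness]t$ together with $\hypo \le \KK[E:boundedness]$ bounds $(\hypo(0,y_\circ)t^2)^{-1}\exp[-\tfrac{1}{12}\xx^2_{0,y_\circ}(t,x)]$ by an absolute constant, and any constant is at most $\sqrt{\TT}/\sqrt{t}$ on $(0,\TT)$. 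Case~3 ($t\ge 2x/\ub$) is symmetric: Assumption~\ref{A:basics} gives $b_1(0,y_\circ)t \le -\ub t \le -2x$, so $|x+b_1(0,y_\circ)t|\ge \ub t/2$ and $\tfrac{1}{12}\xx^2_{0,y_\circ}(t,x)\ge \ub^2/(48\hypo^2(0,y_\circ)t)$, and \eqref{E:expbound} with $p=2$ again yields a bounded constant.

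In Case~2 ($x/(2\KK[E:boundedness])<t<2x/\ub$), where $t$ and $x$ are comparable, we compare directly: $t \ge x/(2\KK[E:boundedness])$ gives $t^{-2}\le 4\KK[E:boundedness]^2/x^2$, and $|b_1(0,y_\circ)|\ge\ub$ lets us rewrite $(\hypo(0,y_\circ)t^2)^{-1} \le (4\KK[E:boundedness]^2/\ub)\cdot|b_1(0,y_\circ)|/(\hypo(0,y_\circ)x^2)$; retaining the exponent rather than absorbing it, $t \le 2x/\ub$ implies $t^{-3}\ge \ub^3/(8x^3)$, so
\[
\tfrac{1}{12}\xx^2_{0,y_\circ}(t,x)=\frac{(x+b_1(0,y_\circ)t)^2}{12\hypo^2(0,y_\circ)t^3}\ge \frac{\ub^3}{96}\cdot\frac{(x+b_1(0,y_\circ)t)^2}{\hypo^2(0,y_\circ)x^3},
\]
matching the middle term of $\sidekernelboundintegrable$ exactly. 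The main bookkeeping obstacle is choosing the power $p$ in \eqref{E:expbound} so that the resulting power of $t$ is nonnegative in both extreme cases; $p=2$ works for both, since in Case~1 we convert $t^{3p-2}/x^{2p}$ to a constant via $x \ge 2\KK[E:boundedness]t$, while in Case~3 the exponent already involves $t^{-1}$ rather than $t^{-3}$.
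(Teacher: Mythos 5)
Your proposal is correct and follows essentially the same route as the paper: the same three-case split aligned with the indicators in \eqref{E:sidekernelboundintegrableDef}, the same lower bounds $|x+b_1(0,y_\circ)t|\ge x/2$ (resp.\ $\ge \ub t/2$) in the early (resp.\ late) regime from $-\KK[E:boundedness]\le b_1(0,y_\circ)\le-\ub$, use of \eqref{E:expbound} to absorb the singular prefactor there, and replacement of $t$ by comparable multiples of $x$ in the middle regime. The only difference is bookkeeping: you take $p=2$ in \eqref{E:expbound} while the paper uses $p=3$ with a square root (its \eqref{E:threehalvesbound}), which is immaterial.
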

\begin{proof}  Fix $y_\circ\in \R$ and $(t,x,y)\in \domplus$. 
 We consider several cases.  First, however, let's use \eqref{E:expbound} with $p=3$ to see that
\begin{equation}\label{E:threehalvesbound} e^{-x}=\sqrt{e^{-2x}}\le \sqrt{\frac{3!}{(2x)^3}}=\frac{\sqrt{3}}{2x^{3/2}} \end{equation}
for $x>0$.  

\noindent $\bullet$ \emph{Case 1}:  Assume that 
\begin{equation*} t\le  \frac{x}{2\KK[E:boundedness]}. \end{equation*}
Then
\begin{equation*} x+b_1(0,y_\circ)t\ge x-\KK[E:boundedness]t\ge x-\frac{x\KK[E:boundedness]}{2\KK[E:boundedness]}=\frac{x}{2}>0. \end{equation*}
Using \eqref{E:threehalvesbound}, we then have
\begin{align*} &\frac{|b_1(0,y_\circ)|}{\hypo(0,y_\circ)t^{3/2}}\exp\left[-\frac{\left(x+b_1(0,y_\circ)t\right)^2}{12\hypo^2(0,y_\circ) t^3}\right]
 \le \frac{|b_1(0,y_\circ)|}{\hypo(0,y_\circ)t^{3/2}}\exp\left[-\frac{\left(x/2\right)^2}{12\hypo^2(0,y_\circ) t^3}\right] \\
& \qquad \le  \frac{\sqrt{3}}{2}\frac{|b_1(0,y_\circ)|}{\hypo(0,y_\circ)t^{3/2}}\left(\frac{48\hypo^2(0,y_\circ)t^3}{x^2}\right)^{3/2}
=\frac{48^{3/2}\sqrt{3}}{2}\hypo^2(0,y_\circ)|b_1(0,y_\circ)|\left(\frac{t}{x}\right)^3\\
&\qquad \le \frac{48^{3/2}\sqrt{3}}{2}\KK[E:boundedness]^3\left(\frac{1}{2\KK[E:boundedness]}\right)^3.
\end{align*}
The claim follows in this case.

\noindent $\bullet$ \emph{Case 2}:  Now assume that 
\begin{equation*} t\ge  \frac{2x}{\ub}. \end{equation*}
In this case,
\begin{equation*} -b_1(0,y_\circ)t -x \ge \ub t-\tfrac12 \ub t= \tfrac12 \ub t. \end{equation*}
Again using \eqref{E:threehalvesbound}, we have
\begin{align*} &\frac{|b_1(0,y_\circ)|}{\hypo(0,y_\circ)t^{3/2}}\exp\left[-\frac{\left(x+b_1(0,y_\circ)t\right)^2}{12\hypo^2(0,y_\circ) t^3}\right]
 \le \frac{|b_1(0,y_\circ)|}{\hypo(0,y_\circ)t^{3/2}}\exp\left[-\frac{\left(\frac12 \ub t\right)^2}{12\hypo^2(0,y_\circ) t^3}\right] \\
 &\qquad \le \frac{|b_1(0,y_\circ)|}{\hypo(0,y_\circ)t^{3/2}}\exp\left[-\frac{\ub^2}{48\hypo^2(0,y_\circ) t}\right] 
 \le  \frac{\sqrt{3}}{2}\frac{|b_1(0,y_\circ)|}{\hypo(0,y_\circ)t^{3/2}}\left(\frac{48\hypo^2(0,y_\circ) t}{\ub^2}\right)^{3/2}\\
&\qquad =\frac{48^{3/2}\sqrt{3}}{2}\frac{\hypo^2(0,y_\circ)|b_1(0,y_\circ)|}{\ub^3}
\le \frac{48^{3/2}\sqrt{3}}{2}\frac{\KK[E:boundedness]^3}{\ub^3}.
\end{align*}
The claim follows in this case.

\noindent $\bullet$ \emph{Case 3}:  Finally, assume that 
\begin{equation*}\frac{x}{2\KK[E:boundedness]}< t< \frac{2x}{\ub}. \end{equation*}
Here
\begin{equation*}
    \frac{\left(x+b_1(0,y_\circ)t\right)^2}{12\hypo^2(0,y_\circ)t^3}+\frac{\left(y-y_\circ\right)^2}{12t} \ge \frac{\left(x+b_1(0,y_\circ)t\right)^2}{12\hypo^2(0,y_\circ)\left(\sfrac{2x}{\ub}\right)^3}+\frac{\left(y-y_\circ\right)^2}{12\left(\sfrac{2x}{\ub}\right)}.
\end{equation*}
The claim easily follows.
\end{proof}

As with Remarks \ref{R:transformationone} and \ref{R:transformationtwo},   \eqref{E:sidekernelboundintegrableDef} suggests several transformations.
\begin{remark}\label{R:transformationthree}
For fixed $y\in \R$, the transformation
\begin{equation*} (r,v_\circ) = \left(s,\frac{y-y_\circ}{\sqrt{s}} \right)\end{equation*}
from $(s,y_\circ)$ to $(r,v_\circ)$ has inverse
\begin{equation*} (s,y_\circ)=\left(r,y-r^{1/2}v_\circ\right) \end{equation*}
which has Jacobian $ds\, dy_\circ = r^{1/2}dr\, dv_\circ$.  We note that
\begin{equation*} r^{1/2}\sidekernelboundintegrable_{y-r^{1/2}v_\circ}(r,x,y)\lb \bOne_{\{r\le  \sfrac{x}{2\KK[E:boundedness]}\}}
+\bOne_{\{r\ge  \sfrac{2x}{\ub}\}}
\rb \le \exp\left[-\tfrac{1}{12}v_\circ^2\right]. \end{equation*}
We also have that
\begin{equation}\label{E:integrabilitythree} \int_{r\in (0,\TT)}\int_{v_\circ\in \R}\exp\left[-\tfrac{1}{12}v_\circ^2\right] dr\, dv_\circ = \TT\sqrt{12\pi}. \end{equation}
\end{remark}

\begin{remark}\label{R:transformationfour}
For fixed $(x,y)\in \Rplusint\times \R$, the transformation
\begin{equation*} (r,v_\circ)=\left(\frac{x+b_1(0,y_\circ)s}{\hypo(0,y_\circ)x^{3/2}},\frac{y-y_\circ}{x^{1/2}}\right)=\left(\frac{x-\left|b_1(0,y_\circ)\right|s}{\hypo(0,y_\circ)x^{3/2}},\frac{y-y_\circ}{x^{1/2}}\right) \end{equation*}
from $(s,y_\circ)$ to $(r,v_\circ)$ has inverse
\begin{equation*}(s,y_\circ)=\left(\zeta_2(x,y,r,v_\circ),y-x^{1/2}v_\circ\right)\end{equation*}
where
\begin{equation} \label{E:zeta2def}\zeta_2(x,y,r,v_\circ)=\frac{x-\hypo\left(0,y-x^{1/2}v_\circ\right)x^{3/2}r}{\left|b_1\left(0,y-x^{1/2}v_\circ\right)\right|}=\frac{x}{\left|b_1\left(0,y-x^{1/2}v_\circ\right)\right|}\lb 1-\hypo\left(0,y-x^{1/2}v_\circ\right)x^{1/2}r\rb.
\end{equation}
The Jacobian of this inverse transformation is
\begin{equation*} ds\, dy_\circ = \left|\det\begin{pmatrix}-\frac{\hypo(0,y-x^{1/2}v_\circ)x^{3/2}}{b_1(0,y-x^{1/2}v_\circ)} & \sfrac{\partial \zeta_2}{\partial v_\circ}(x,y,r,v_\circ) \\ 0 & -x^{1/2} \end{pmatrix}\right| dr\, dv_\circ 
 = \frac{\hypo\left(0,y-x^{1/2}v_\circ\right)}{\left|b_1\left(0,y-x^{1/2}v_\circ\right)\right|}x^2 dr\, dv_\circ. \end{equation*}
We note that
\begin{align*} &\frac{\hypo\left(0,y-x^{1/2}v_\circ\right)}{\left|b_1\left(0,y-x^{1/2}v_\circ\right)\right|}x^2\sidekernelboundintegrable_{y-r^{1/2}v_\circ}(\zeta_2(x,y,r,v_\circ),x,y)\bOne_{\{\sfrac{x}{2\KK[E:boundedness]}< \zeta_2(x,y,r,v_\circ)< \sfrac{2x}{\ub}\wedge \TT\}}\\
&\qquad \le \exp\left[-\tfrac{\ub^3}{96}r^2-\tfrac{\ub}{24}v_\circ^2\right]. \end{align*}
We also have that
\begin{equation}\label{E:integrabilityfour} \int_{r\in \R}\int_{v_\circ\in \R}\exp\left[-\tfrac{\ub^3}{96}r^2-\tfrac{\ub}{24}v_\circ^2\right]dr\, dv_\circ=  \sqrt{\tfrac{96\pi}{\ub^3}}\sqrt{\tfrac{24\pi}{\ub}}.\end{equation}
\end{remark}
Let's now prove integrability (similar to Proposition \ref{P:integrablekernelbound}).
\begin{proposition}\label{P:integrablesidekernelbound}  We have that
\begin{equation} \label{E:integrablesidekernelbound} \KK[E:integrablesidekernelbound] \Def \sup_{(t,x,y)\in \domplus}\int_{s=0}^t \int_{y_\circ\in \R}\sidekernelboundintegrable_{\SP}(s,x,y)dy_\circ\, ds \end{equation}
is finite.
\end{proposition}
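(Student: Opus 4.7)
The plan is to split the integral according to the three indicator regions appearing in the definition \eqref{E:sidekernelboundintegrableDef} of $\sidekernelboundintegrable_{y_\circ}$ and then apply the changes of variables from Remarks \ref{R:transformationthree} and \ref{R:transformationfour} to each piece. Fix $(t,x,y) \in \domplus$ and write
\begin{equation*}
\int_{s=0}^t\int_{y_\circ\in\R}\sidekernelboundintegrable_{y_\circ}(s,x,y)\,dy_\circ\,ds = I_1+I_2+I_3,
\end{equation*}
where $I_1$, $I_2$, and $I_3$ correspond respectively to the indicators $\bOne_{\{s\le x/(2\KK[E:boundedness])\}}$, $\bOne_{\{s\ge 2x/\ub\}}$, and $\bOne_{\{x/(2\KK[E:boundedness])<s<2x/\ub\}}$.

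For $I_1$ and $I_2$, both integrands share the common factor $\tfrac{1}{\sqrt{s}}\exp\left[-\tfrac{(y-y_\circ)^2}{12s}\right]$. I would apply Remark \ref{R:transformationthree}, substituting $(r,v_\circ)=(s,(y-y_\circ)/\sqrt{s})$. The pointwise bound displayed in that remark shows that, after change of variables and enlarging the $r$-integration to the full interval $(0,\TT)$ and the $v_\circ$-integration to all of $\R$, each of $I_1$ and $I_2$ is dominated by $\int_0^\TT\int_\R \exp[-v_\circ^2/12]\,dv_\circ\,dr = \TT\sqrt{12\pi}$ by \eqref{E:integrabilitythree}; this bound is manifestly independent of $(t,x,y)$.

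For $I_3$, the integrand is the more intricate second term in \eqref{E:sidekernelboundintegrableDef}. Here I would apply Remark \ref{R:transformationfour}, using the substitution $(r,v_\circ)=((x+b_1(0,y_\circ)s)/(\hypo(0,y_\circ)x^{3/2}),(y-y_\circ)/\sqrt{x})$. The pointwise inequality stated in that remark, applied after the indicator constraint is dropped by enlargement to all of $\R^2$, bounds $I_3$ by $\int_\R\int_\R \exp[-\tfrac{\ub^3}{96}r^2-\tfrac{\ub}{24}v_\circ^2]\,dr\,dv_\circ = \sqrt{96\pi/\ub^3}\sqrt{24\pi/\ub}$ by \eqref{E:integrabilityfour}, again independent of $(t,x,y)$.

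Since all three bounds are finite and uniform in $(t,x,y)\in\domplus$, the supremum \eqref{E:integrablesidekernelbound} is finite, and in fact one may take
\begin{equation*}
\KK[E:integrablesidekernelbound] \le 2\TT\sqrt{12\pi}+\sqrt{96\pi/\ub^3}\sqrt{24\pi/\ub}.
\end{equation*}
There is no real obstacle to this argument beyond bookkeeping: the hard work has already been done in isolating the Gaussian-style pointwise bounds in Remarks \ref{R:transformationthree} and \ref{R:transformationfour}. The only thing to verify is that each of the three sub-integrals is correctly mapped into a form to which the relevant remark applies and that the enlarged integration domains still produce finite integrals, both of which are immediate from the explicit displays in those remarks.
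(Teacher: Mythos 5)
Your proposal is correct and follows essentially the same route as the paper: split according to the indicator regions of \eqref{E:sidekernelboundintegrableDef}, change variables via Remarks \ref{R:transformationthree} and \ref{R:transformationfour}, and invoke the pointwise bounds together with \eqref{E:integrabilitythree} and \eqref{E:integrabilityfour} to get a bound uniform in $(t,x,y)$. The only cosmetic difference is that the paper handles the two outer-region indicators in a single integral (so its bound avoids your harmless factor of $2$ in front of $\TT\sqrt{12\pi}$).
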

\begin{proof}
Fix $(t,x,y)\in \domplus$.  From Remarks \ref{R:transformationthree} and \ref{R:transformationfour}, we have that
\begin{align*} &\int_{s=0}^t \int_{y_\circ\in \R}\sidekernelboundintegrable_{\SP}(s,x,y)\bOne_{\{s\le  \sfrac{x}{2\KK[E:boundedness]}\}}
+\bOne_{\{s\ge  \sfrac{2x}{\ub}\}}dy_\circ\, ds \\
&\qquad =  \int_{r=0}^t\int_{v_\circ\in \R}r^{1/2}\sidekernelboundintegrable_{y-r^{1/2}v_\circ}(r,x,y)\lb \bOne_{\{r\le  \sfrac{x}{2\KK[E:boundedness]}\}}
+\bOne_{\{r\ge  \sfrac{2x}{\ub}\}}
\rb dr\, dv_\circ \\
&\int_{s=0}^t \int_{y_\circ\in \R}\sidekernelboundintegrable_{\SP}(s,x,y)\bOne_{\{\sfrac{x}{2\KK[E:boundedness]}< t< \sfrac{2x}{\ub}\}}dy_\circ\, ds \\
&\qquad =  \int_{r\in \R}\int_{v_\circ\in \R}\frac{\hypo\left(0,y-x^{1/2}v_\circ\right)}{\left|b_1\left(0,y-x^{1/2}v_\circ\right)\right|}x^2\sidekernelboundintegrable_{y-r^{1/2}v_\circ}(\zeta_2(x,y,r,v_\circ),x,y)\bOne_{\{\sfrac{x}{2\KK[E:boundedness]}< \zeta_2(x,y,r,v_\circ)< \sfrac{2x}{\ub}\wedge t\}}dr\, dv_\circ. \end{align*}
The claim follows from \eqref{E:integrabilitythree} and \eqref{E:integrabilityfour}.
\end{proof}

We also have the analogue of Proposition \ref{P:dirac}; i.e., a \lq\lq jump" boundary condition.
For $y_\circ\in \R$ and $(t,x,y)\in \domplus$, we explicitly have that
\begin{align*}
    \sidekernel_{y_\circ}(s,0,y)&= \frac{\sqrt{12}|b_1(0,y_\circ)|}{2\pi \hypo(0,y_\circ)t^2}\exp\left[-6\left(\frac{x+b_1(0,y_\circ)t}{\hypo(0,y_\circ)t^{3/2}}-\tfrac12b_2(0,y_\circ)t\right)^2\right.\\
&\qquad \qquad \left.-6\left(\frac{x+b_1(0,y_\circ)t}{\hypo(0,y_\circ)t^{3/2}}-\tfrac12b_2(0,y_\circ)t\right)\left(\frac{y-y_\circ}{t^{1/2}}+b_2(0,y_\circ)t\right) -2\left(\frac{y-y_\circ}{t^{1/2}}+b_2(0,y_\circ)t\right) ^2\right], 
\end{align*}
which is similar to the integrand of \eqref{E:coreI}.
\begin{proposition}\label{P:jumpboundary}
Fix $g\in C_b([0,\TT)\times \Rplus\times \R\times \R)$.  For any $(t^*,y^*)\in \bdy_s$, we have that
\begin{equation} \label{E:jumpboundary}\begin{aligned} &\lim_{\substack{(t,x,y)\to (t^*,0,y^*)\\ (t,x,y)\in \dom^\circ}}\int_{s=0}^t \int_{y_\circ\in \R}\sidekernel_{y_\circ}(s,x,y)g(s,x,y,y_\circ)ds\, dy_\circ = g(0,0,y^*,y^*)\\
&\qquad +\int_{s=0}^{t^*} \int_{y_\circ\in \R}\sidekernel_{y_\circ}(s,0,y)g(s,0,y^*,y_\circ)ds\, dy_\circ. \end{aligned}\end{equation}
\end{proposition}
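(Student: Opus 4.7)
The plan is to split the integration in $s$ according to the three regions appearing in the bound \eqref{E:sidekernelboundintegrableDef}: write
\begin{equation*}
I(t,x,y) \Def \int_0^t\int_\R \sidekernel_{y_\circ}(s,x,y) g(s,x,y,y_\circ)\,ds\,dy_\circ = I_A+I_B+I_C,
\end{equation*}
where $I_A$, $I_B$, $I_C$ are the contributions from $\{s\le x/(2\KK[E:boundedness])\}$, $\{x/(2\KK[E:boundedness])<s<2x/\ub\}$, and $\{s\ge 2x/\ub\}\cap(0,t)$ respectively. As the heuristic in the text suggests, these three regions will contribute $0$, the ``Dirac'' term $g(0,0,y^*,y^*)$, and the residual boundary integral on the right of \eqref{E:jumpboundary}.

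For $I_A$, I would apply Lemma \ref{L:KandhatK}, Proposition \ref{P:sideintegrablekernel}, and the change of variables in Remark \ref{R:transformationthree}; the resulting $r$-range has length $x/(2\KK[E:boundedness])\to 0$ while the $v_\circ$-Gaussian is uniformly integrable, so $|I_A|\to 0$. For $I_C$, I would use dominated convergence: pointwise in $(s,y_\circ)$ with $s>0$, the indicator $\bOne_{\{s\ge 2x/\ub\}}\to 1$, the integrand $\sidekernel_{y_\circ}(s,x,y)g(s,x,y,y_\circ)$ tends to $\sidekernel_{y_\circ}(s,0,y^*)g(s,0,y^*,y_\circ)$ by continuity, and the dominating function $\KK\|g\|\sidekernelboundintegrable_{y_\circ}(s,x,y)$ is integrable by Proposition \ref{P:integrablesidekernelbound}, yielding the residual integral on the right of \eqref{E:jumpboundary}.

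The substantive work is on $I_B$. Apply the substitution $(r,v_\circ)$ of Remark \ref{R:transformationfour}. A short computation using \eqref{E:XXYYDef} and the definition of $r$ shows that $\xx_{0,y_\circ}(\zeta_2,x)=r(x/\zeta_2)^{3/2}$ and $\yy_{0,y_\circ}(\zeta_2,y)=v_\circ(x/\zeta_2)^{1/2}$ on the image, and the $|b_1|/\hypo$ factors inside $\sidekernel$ cancel those produced by the Jacobian, giving
\begin{equation*}
\sidekernel_{y_\circ}(s,x,y)\,ds\,dy_\circ = \frac{\sqrt{12}}{2\pi}\cdot\frac{x^2}{\zeta_2(x,y,r,v_\circ)^2}\exp\!\left[-\cE_{0,y-x^{1/2}v_\circ}(\zeta_2,x,y)\right]dr\,dv_\circ.
\end{equation*}
Since $\zeta_2/x\to 1/|b_1(0,y^*)|$ and $y-x^{1/2}v_\circ\to y^*$ pointwise in $(r,v_\circ)$, the exponent tends to $6r^2|b_1(0,y^*)|^3+6rv_\circ|b_1(0,y^*)|^2+2v_\circ^2|b_1(0,y^*)|$, the prefactor to $(\sqrt{12}/(2\pi))|b_1(0,y^*)|^2$, and $g(\zeta_2,x,y,y-x^{1/2}v_\circ)\to g(0,0,y^*,y^*)$. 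The indicator $\bOne_{\{x/(2\KK[E:boundedness])<\zeta_2<2x/\ub\}}$ converges to $1$ because $1/|b_1(0,y^*)|\in (1/(2\KK[E:boundedness]),2/\ub)$ strictly, by Assumption \ref{A:boundedness} and \eqref{E:basics}. Dominated convergence using the Gaussian bound of Remark \ref{R:transformationfour}, together with the rescaling $(\tilde r,\tilde v)=(r|b_1(0,y^*)|^{3/2},v_\circ|b_1(0,y^*)|^{1/2})$ that absorbs the extra $|b_1(0,y^*)|^2$, reduces the limit to $g(0,0,y^*,y^*)$ times the integral in \eqref{E:unitintegral}, which is $1$.

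The main obstacle is the bookkeeping for $I_B$: showing the exact cancellation between the $|b_1|$ and $\hypo$ factors of $\sidekernel$ and those of the Jacobian of Remark \ref{R:transformationfour}, and verifying that after the rescaling in $(\tilde r,\tilde v)$ the limiting Gaussian integral matches \eqref{E:unitintegral} exactly. A secondary but important point is the continuity of the indicator $\bOne_{\{x/(2\KK[E:boundedness])<\zeta_2<2x/\ub\}}$, which holds because $\ub\le|b_1(0,y^*)|\le\KK[E:boundedness]$ with strict slack against the factors $2\KK[E:boundedness]$ and $\ub/2$.
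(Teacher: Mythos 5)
Your proposal is correct and follows essentially the paper's own route: the paper merges your regions $I_A$ and $I_C$ into a single dominated-convergence step using the change of variables of Remark \ref{R:transformationthree} (the indicator of the $I_A$ region tends to zero pointwise in $r>0$, so that contribution vanishes automatically), and treats the middle region exactly as you do via Remark \ref{R:transformationfour}, the limit $\zeta_2(x,y,r,v_\circ)/x\to 1/|b_1(0,y^*)|$, the cancellation of the $|b_1|/\hypo$ factors against the Jacobian, and the unit Gaussian integral \eqref{E:unitintegral} after your rescaling. The one step to tighten is the dominated convergence for $I_C$: the bound $\|g\|\,\sidekernelboundintegrable_{y_\circ}(s,x,y)$ depends on the varying point $(x,y)$ and so cannot itself serve as the dominating function, so you should (as the paper does, and as you already do for $I_A$) pass to the variables $(r,v_\circ)$ of Remark \ref{R:transformationthree}, where the fixed integrable bound proportional to $\exp\left[-v_\circ^2/12\right]$ on $(0,\TT)\times\R$ dominates uniformly in $(t,x,y)$.
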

\begin{proof}
Fix $(t,x,y)\in \domint$.  Let's use Remarks \ref{R:transformationthree} and \ref{R:transformationfour} to write
\begin{align*}
&\int_{s=0}^t \int_{y_\circ\in \R}\sidekernel_{y_\circ}(s,x,y)g(s,x,y,y_\circ)ds\, dy_\circ \\
&\qquad = 
\int_{s=0}^t \int_{y_\circ\in \R}\lb \bOne_{\{s\le  \sfrac{x}{2\KK[E:boundedness]}\}}+\bOne_{\{s\ge  \sfrac{2x}{\ub}\}}\rb \sidekernel_{y_\circ}(s,x,y)g(s,x,y,y_\circ)ds\, dy_\circ\\
&\qquad \qquad + \int_{s=0}^t \int_{y_\circ\in \R}\bOne_{\{\sfrac{x}{2\KK[E:boundedness]}< s< \sfrac{2x}{\ub}\}}\sidekernel_{y_\circ}(s,x,y)g(s,x,y,y_\circ)ds\, dy_\circ\\
&\qquad = \int_{r=0}^{\TT}\int_{v_\circ\in \R}\Phi^{(1)}_g(t,x,y,r,v_\circ) dr\, dv_\circ+ \int_{r\in \R}\int_{v_\circ\in \R} \Phi^{(2)}_g(t,x,y,r,v_\circ) dr\, dv_\circ
\end{align*}
where
\begin{align*}
\Phi^{(1)}_g(t,x,y,r,v_\circ) &=  r^{1/2}\sidekernel_{y-r^{1/2}v_\circ}(r,x,y)\lb \bOne_{\{r\le  \sfrac{x}{2\KK[E:boundedness]}\}}+\bOne_{\{r\ge  \sfrac{2x}{\ub}\}}\rb \bOne_{\{0<r<t\}} \\
&\qquad \times g(r,x,y,y-r^{1/2}v_\circ)\\
\Phi^{(2)}_g(t,x,y,r,v_\circ)
&= \frac{\hypo\left(0,y-x^{1/2}v_\circ\right)}{\left|b_1\left(0,y-x^{1/2}v_\circ\right)\right|}x^2\sidekernel_{y-x^{1/2}v_\circ}\left(\zeta_2(x,y,r),x,y\right)\bOne_{\lb \frac{x}{2\KK[E:boundedness]}< \zeta_2(x,y,r)< \frac{2x}{\ub}\wedge t\rb } \\
&\qquad \times  g\left(\zeta_2(x,y,r),x,y,y-x^{1/2}v_\circ\right).
\end{align*}

Let's first write the bound
\begin{equation*} \left|\Phi^{(1)}_g(t,x,y,r,v_\circ)\right|
\le \KK[E:KandhatKConst]\KK[E:sideintegrablekernel]\|g\| r^{1/2} \sidekernelboundintegrable_{y-r^{1/2}v_\circ}(r,x,y)\lb \bOne_{\{0<r\le  \sfrac{x}{2\KK[E:boundedness]}\}}+\bOne_{\{r\ge  \sfrac{2x}{\ub}\}}\rb. \end{equation*}
From the comments of Remark \ref{R:transformationthree}, we can use dominated convergence on the integral of $\Phi^{(1)}_g$.  For almost every $r$ and $v_\circ$, $(t,x,y)\mapsto \Phi^{(1)}_g(t,x,y,r,v_\circ)$ is continuous; thus
\begin{align*} \lim_{\substack{(t,x,y)\to (t^*,0,y^*)\\ (t,x,y)\in \dom^\circ}}\int_{r=0}^{\TT} \int_{v_\circ\in \R} \Phi^{(1)}_g(t,x,y,r,v_\circ) dv_\circ \, dr
&=\int_{r=0}^{\TT}\int_{v_\circ\in \R} \Phi^{(1)}_g(t^*,x^*,y^*,r,v_\circ) dv_\circ \, dr\\
& = \int_{s=0}^{t^*} \int_{y_\circ\in \R} \sidekernel_{y_\circ}(s,0,y)g(s,0,y,y_\circ)ds\, dy_\circ.
\end{align*}

Let's next write
\begin{align*} \left|\Phi^{(2)}_g(t,x,y,r,v_\circ)\right|
&\le \KK[E:KandhatKConst]\KK[E:sideintegrablekernel]\|g\|\frac{\hypo\left(0,y-x^{1/2}v_\circ\right)}{\left|b_1\left(0,y-x^{1/2}v_\circ\right)\right|}x^2\sidekernelboundintegrable_{y-r^{1/2}v_\circ}(\zeta_2(x,y,r,v_\circ),x,y)\\
&\qquad \times \bOne_{\{\sfrac{x}{2\KK[E:boundedness]}< \zeta_2(x,y,r,v_\circ)< \sfrac{2x}{\ub}\wedge \TT \}}.\end{align*}
From the comments of Remark \ref{R:transformationfour}, we can use dominated convergence on the integral of $\Phi^{(2)}_g$.  

Formalizing the second representation of \eqref{E:zeta2def}, let's now define
\begin{equation*} \tilde \zeta_2(x,y,r,v_\circ)\Def \frac{ 1-\hypo\left(0,y-x^{1/2}v_\circ\right)x^{1/2}r}{\left|b_1\left(0,y-x^{1/2}v_\circ\right)\right|}.\end{equation*}
For every $r$ and $v_\circ$, 
\begin{equation}\label{E:zeta2asymp} \lim_{\substack{(t,x,y)\to (t^*,0,y^*)\\ (t,x,y)\in \dom^\circ}}\tilde \zeta_2(x,y,r,v_\circ)=\frac{1}{|b_1(0,y)|}. \end{equation}

Fix $r$ and $v_\circ$ in $\R$.  
Then
\begin{equation*}
    \bOne_{\lb \frac{x}{2\KK[E:boundedness]}< \zeta_2(x,y,r,v_\circ)< \frac{2x}{\ub}\wedge t\rb } = \bOne_{\lb \frac{x}{2\KK[E:boundedness]}< \zeta_2(x,y,r,v_\circ)< \frac{2x}{\ub}\rb }\bOne_{\{\zeta_2(x,y,r,v_\circ)< t\}}.
\end{equation*}
We have that
\begin{equation*}
\bOne_{\{\zeta_2(x,y,r,v_\circ)< t\}}
= \bOne_{\{\tilde \zeta_2(x,y,r,v_\circ)< t/x\}}
\end{equation*}
and \eqref{E:zeta2asymp} thus implies that
\begin{equation*}  \lim_{\substack{(t,x,y)\to (t^*,0,y^*)\\ (t,x,y)\in \dom^\circ}}\bOne_{\{0< \zeta_2(x,y,r,v_\circ)< t\}}=1. \end{equation*}
By rearrangement
\begin{align*}
\bOne_{\lb \frac{x}{2\KK[E:boundedness]}< \zeta_2(x,y,r,v_\circ)< \frac{2x}{\ub}\rb }
& = \bOne_{\lb \frac{1}{2\KK[E:boundedness]}< \tilde \zeta_2(x,y,r,v_\circ)< \frac{2}{\ub}\rb }\\
& = \bOne_{\lb -\frac{2}{\ub}< \frac{\hypo\left(0,y-x^{1/2}v_\circ\right)x^{1/2}r-1}{\left|b_1\left(0,y-x^{1/2}v_\circ\right)\right|}< -\frac{1}{2\KK[E:boundedness]}\rb }\\
&= \bOne_{\lb -\frac{2\left|b_1\left(0,y-x^{1/2}v_\circ\right)\right|}{\ub}< \hypo\left(0,y-x^{1/2}v_\circ\right)x^{1/2}r-1< -\frac{\left|b_1\left(0,y-x^{1/2}v_\circ\right)\right|}{2\KK[E:boundedness]}\rb }\\
&= \bOne_{\lb -2\frac{\left|b_1\left(0,y-x^{1/2}v_\circ\right)\right|}{\ub}+1< \hypo\left(0,y-x^{1/2}v_\circ\right)x^{1/2}r< 1-\frac12 \frac{\left|b_1\left(0,y-x^{1/2}v_\circ\right)\right|}{\KK[E:boundedness]}\rb }.
\end{align*}
By Assumptions \ref{A:boundedness} and \ref{A:basics}, we have that
\begin{equation*}  \frac{\left|b_1\left(0,y-x^{1/2}v_\circ\right)\right|}{\KK[E:boundedness]}\le 1  \qquad \text{and}\qquad
\frac{\left|b_1\left(0,y-x^{1/2}v_\circ\right)\right|}{\ub}
\ge 1\end{equation*}
so
\begin{equation*} \bOne_{\lb \frac{x}{2\KK[E:boundedness]}<\zeta_2(x,y,r,v_\circ)< \frac{2x}{\ub}\rb }\ge \bOne_{\lb -1< \hypo\left(0,y-x^{1/2}v_\circ\right)x^{1/2}r< \frac12 \rb } \end{equation*}
and thus
\begin{equation*} 1\ge \varlimsup_{\substack{(t,x,y)\to (t^*,0,y^*)\\ (t,x,y)\in \dom^\circ}}\bOne_{\lb \frac{x}{2\KK[E:boundedness]}< \zeta_2(x,y,r,v_\circ)< \frac{2x}{\ub}\rb }\ge \varliminf_{\substack{(t,x,y)\to (t^*,0,y^*)\\ (t,x,y)\in \dom^\circ}}\bOne_{\lb -1< \hypo\left(0,y-x^{1/2}v_\circ\right)x^{1/2}r< \frac12 \rb } =1. \end{equation*}
We also have that
\begin{align*}
    &\frac{\hypo\left(0,y-x^{1/2}v_\circ\right)}{\left|b_1\left(0,y-x^{1/2}v_\circ\right)\right|}x^2\sidekernel_{y-x^{1/2}v_\circ}\left(\zeta_1(x,y,r,v_\circ),x,y\right)\\
&\qquad = \left(\frac{x}{\zeta_2(x,y,r,v_\circ)}\right)^2 \frac{\sqrt{12}}{2\pi}\exp\left[-6\XX^2_{0,y-x^{1/2}v_\circ}\left(\zeta_2(x,y,r,v_\circ),x\right)\right.\\
&\qquad \qquad \left. -6\XX_{0,y-x^{1/2}v_\circ}\left(\zeta_2(x,y,r,v_\circ),x\right)\YY_{0,y-x^{1/2}v_\circ}\left(\zeta_2(x,y,r,v_\circ),y\right)-2\YY^2_{0,y-x^{1/2}v_\circ}\left(\zeta_2(x,y,r,v_\circ),y\right)\right].
\end{align*}
Using the second representation of \eqref{E:zeta2def}, we have
\begin{align*}
\frac{x}{\zeta_2(x,y,r,v_\circ)}
&=\frac{1}{\tilde \zeta_2(x,y,r,v_\circ)}\\
\XX_{0,y-x^{1/2}v_\circ}\left(\zeta_2(x,y,r,v_\circ),x\right)
&= \frac{x+b_1(0,y-x^{1/2}v_\circ)\zeta_2(x,y,r,v_\circ)}{\hypo(0,y-x^{1/2}v_\circ) \zeta^{3/2}_2(x,y,r,v_\circ)} -\frac12b_2\left(0,y-x^{1/2}v_\circ\right)x^{1/2}\tilde \zeta^{1/2}_2(x,y,r,v_\circ)\\
&= \frac{x-\left|b_1(0,y-x^{1/2}v_\circ)\right|\lb \frac{x-\hypo\left(0,y-x^{1/2}v_\circ\right)x^{3/2}r}{\left|b_1\left(0,y-x^{1/2}v_\circ\right)\right|}\rb}{\hypo(0,y-x^{1/2}v_\circ)x^{3/2}\tilde \zeta^{3/2}_2(x,y,r,v_\circ)} \\
&\qquad -\frac12b_2\left(0,y-x^{1/2}v_\circ\right)x^{1/2}\tilde \zeta^{1/2}_2(x,y,r,v_\circ)\\
&= \frac{r}{\tilde \zeta_2^{3/2}(x,y,r,v_\circ)}-\frac12b_2\left(0,y-x^{1/2}v_\circ\right)x^{1/2}\tilde \zeta^{1/2}_2(x,y,r,v_\circ)\\
\YY_{0,y-x^{1/2}v_\circ}\left(\zeta_2(x,y,r,v_\circ),y\right)
& =\frac{x^{1/2}v_\circ}{\zeta^{1/2}_2(x,y,r,v_\circ)}+b_2\left(0,y-x^{1/2}v_\circ\right)\zeta^{1/2}_2(x,y,r,v_\circ)\\
&=\frac{v_\circ}{\tilde \zeta^{1/2}_2(x,y,r,v_\circ)} +b_2\left(0,y-x^{1/2}v_\circ\right)x^{1/2}\tilde \zeta^{1/2}_2(x,y,r,v_\circ).
\end{align*}
Taking limits and using \eqref{E:zeta2asymp},
\begin{align*} \lim_{\substack{(t,x,y)\to (t^*,0,y^*)\\ (t,x,y)\in \dom^\circ}}\frac{x}{\zeta_2(x,y,r,v_\circ)}&=|b_1(0,y^*)| \\
\lim_{\substack{(t,x,y)\to (t^*,0,y^*)\\ (t,x,y)\in \dom^\circ}}
\XX_{0,y-x^{1/2}v_\circ}\left(\zeta_2(x,y,r,v_\circ),x\right)&=r|b_1(0,y^*)|^{3/2}\\
\lim_{\substack{(t,x,y)\to (t^*,0,y^*)\\ (t,x,y)\in \dom^\circ}}
\YY_{0,y-x^{1/2}v_\circ}\left(\zeta_2(x,y,r,v_\circ),y\right)&=v_\circ |b_1(0,y^*)|^{1/2} \end{align*}
and, combining things,
\begin{align*}&\lim_{\substack{(t,x,y)\to (t^*,0,y^*)\\ (t,x,y)\in \dom^\circ}}
\Phi^{(2)}_g(x,y,r,v_\circ)\\
&\qquad = |b_1(0,y^*)|^2\frac{\sqrt{12}}{2\pi}\exp\left[-6\left(r|b_1(0,y^*)|^{3/2}\right)^2\right.\\
&\qquad \qquad \left.- 6\left(r|b_1(0,y^*)|^{3/2}\right)\left(v_\circ |b_1(0,y^*)|^{1/2}\right)^2 -2 \left(v_\circ |b_1(0,y^*)|^{1/2}\right)^2\right]g(0,0,y^*,y^*). 
\end{align*}
Similarly to \eqref{E:unitintegral},
\begin{align*}&\int_{(r,v_\circ)\in \R^2}|b_1(0,y^*)|^2\frac{\sqrt{12}}{2\pi}\exp\left[-6\left(r|b_1(0,y^*)|^{3/2}\right)^2\right.\\
&\qquad \left. - 6\left(r|b_1(0,y^*)|^{3/2}\right)\left(v_\circ |b_1(0,y^*)|^{1/2}\right) -2 \left(v_\circ |b_1(0,y^*)|^{1/2}\right)^2\right]dr\, dv_\circ \\
&\qquad = \int_{(r,v_\circ)\in \RR}\frac{\sqrt{12}}{2\pi}\exp\left[-\frac12 \begin{pmatrix} r & v_\circ\end{pmatrix}\begin{pmatrix} 12 & 6 \\ 6 & 4 \end{pmatrix}\begin{pmatrix} r \\ v_\circ \end{pmatrix}\right] dr\, dv_\circ=1.
\end{align*}
Dominated convergence now that
\begin{equation*} \lim_{\substack{(t,x,y)\to (t^*,0,y^*)\\ (t,x,y)\in \dom^\circ}}\int_{r\in \R}\int_{v_\circ\in \R} \Phi^{(2)}_g(t,x,y,r,v_\circ) dr\, dv_\circ=g(0,0,y^*,y^*). \end{equation*}
Combining our arguments, the claim follows.
\end{proof}

\begin{remark}\label{R:McKean}
The kernel \eqref{E:sidekernelDef} and its space-time integral \textup{(}as in Proposition \ref{P:jumpboundary}\textup{)} is closely connected to the work of \cite{mckean1962winding} \textup{(}see also \cite{masoliver1995exact,masoliver1996exact}\textup{)}.  In \cite{mckean1962winding}, McKean constructs an implicit formula \textup{(}involving Bessel functions\textup{)} for the distribution of the time $\tau^{(0,1)}$ and  place $Y^{(0,1)}_{\tau^{(0,1)}}$ at which the process  \eqref{E:KolmogorovSDE} \textup{(}starting from $(X_0,Y_0)=(0,1)$\textup{)} hits $x=0$ \textup{(}see also \cite{lefebvre1989first,lefebvre1989aap}\textup{)}.
 Informally, Proposition \ref{P:jumpboundary} says that the integral \eqref{E:jumpboundary} against $\sidekernel$ tends to the sum of a Dirac measure and an integral against the measure suggested by the Green function \textup{(}expression \textup{(}2\textup{)}\textup{)} of \cite{mckean1962winding}.

Our work, and Proposition \ref{P:jumpboundary}, focusses in a sense on the boundary layer of \eqref{E:sidekernelDef} near $x=0$.   The work of \cite{mckean1962winding} in a sense focusses more on the behavior at the boundary.  Further exploration of the connection between our PDE analysis and the probabilistic analysis of \cite{mckean1962winding} might be of independent interest.  In this regard, we note that \cite{mckean1962winding} did not give an explicit description of the statistics of $\tau^{(0,1)}$. 
\end{remark}

\section{Corrections}\label{S:Corrections}
Let's try to improve upon $\kernel$ and $\sidekernel$ of \eqref{E:kernelDef} and \eqref{E:sidekernelDef} by considering a \emph{corrected} kernel
\begin{equation}\label{E:kernelQDef} \kernelQ_{\SP}(t,x,y)= \kernel_{\SP}(t,x,y)\lb 1+\QCorrector_{\SP}(t,x,y)\rb \qquad (t,x,y)\in \openstrip\end{equation}
where $\QCorrector_{\SP}$ is to be determined such that $\genP \kernelQ_{\SP}$ is sufficiently \lq\lq small" on $\openstrip$.  More specifically, we will need good bounds, given in Theorem \ref{T:collected}, on $\genP \kernelQ$ for the Volterra recursion of Section \ref{S:Volterra}.  The function $\QCorrector_{\SP}$ will be given in \eqref{E:zwounds}-\eqref{E:zwoundsprime} as the solution of the linear equation representing the projection of $\genP \kernelQ\equiv 0$ onto a carefully-constructed low-dimensional (namely, 6-dimensional) vector space of space-time polynomials.

A number of techniques for asymptotically expanding fundamental solutions of heat equations (in small time) have been developed.  These can proceed from semigroup theory \cite{vassilevich2003heat} or from probability theory.  In the case of probability theory, one can consider expansions around Varadhan-type \cite{varadhan1967behavior} small-time asymptotics of a heat kernel \cite{arous1989developpement,azencott1984densite,ben1988developpement,franchi2014small,Leandre19924576}
or from stochastic flows (or their analytic equivalent) and parametrices \cite{bally2015probabilistic,baudoin2012stochastic,MR2802040,taylor1986parametrix}.  Invariably, a fair amount of bookkeeping is involved to keep track of degrees in the expansion \cite{chen1957integration}.  This becomes particularly challenging when noise and smoothness propagate through the H\"ormander Lie brackets (see references in \cite{baudoin2012stochastic}).  Here, our goal is to correct $\kernel$ and $\kernelQ$ so that we can bound a Volterra-type recursion for a variation of parameters formula.  More precisely, we seek bounds similar to the kernel of \eqref{E:kernelboundDef} (and recall in particular the complexities of the upper bounds of \eqref{E:kernelboundintegrableDef} and \eqref{E:sidekernelboundintegrableDef}). To do so, we shall simply project the PDE for \eqref{E:kernelQDef} onto a space of polynomials.

Using Proposition \ref{P:KEvol}, we can understand $\genP \kernel_{\SP}$ of \eqref{E:linearizedgenerator}; for $(\SP)\in \RR$,
\begin{align*} \genP \kernel_{\SP}(t,x,y)&=\genP^L_{\SP}\kernel_{\SP}(t,x,y)+\lb b_1(x,y)-b_{1,\SP}^L(x,y)\rb \frac{\partial \kernel_{\SP}}{\partial x}(t,x,y)\\
&\qquad +\lb b_2(x,y)-b_2(\SP)\rb \frac{\partial \kernel_{\SP}}{\partial y}(t,x,y) + c(x,y)\kernel_{\SP}(t,x,y)\\
&= \Xi_{\SP}(t,x,y)\kernel_{\SP}(t,x,y)\end{align*}
for $(t,x,y)\in \openstrip$, where we have defined
\begin{equation*} \Xi_{\SP}(t,x,y) = -\lb b_1(x,y)-b^L_{1,\SP}(x,y)\rb \frac{\partial \cE_{\SP}}{\partial x}(x,y) - \lb b_2(x,y)-b_2(\SP)\rb \frac{\partial \cE_{\SP}}{\partial y}(x,y)+c(x,y)\end{equation*}
for $(t,x,y)\in \openstrip$.  Thus
\begin{equation} \label{E:ErrorPDE}\begin{aligned} \genP \kernelQ_{\SP} &= \left(\genP\kernel_{\SP}\right)\lb 1+Q_{\SP}\rb + \kernel_{\SP}\left(\genP\lb 1+Q_{\SP}\rb\right)-c\kernel_{\SP}\lb 1+Q_{\SP}\rb \\
&\qquad + \left(\tfrac{\partial}{\partial y} \kernel_{\SP}\right)\left(\tfrac{\partial}{\partial y}\lb 1+Q_{\SP}\rb\right) \\
&= \kernel_{\SP}\Xi_{\SP} \lb 1+Q_{\SP}\rb +\kernel_{\SP}\lb c+\genP \QCorrector_{\SP}\rb - c\kernel_{\SP}\lb 1+Q_{\SP}\rb \\
&\qquad - \kernel_{\SP}\left(\tfrac{\partial \cE_{\SP}}{\partial y}\right)\left(\tfrac{\partial \QCorrector_{\SP}}{\partial y}\right) \\
&= \kernel_{\SP}\lb \tgenP_{\SP} \QCorrector_{\SP}+\Xi_{\SP} \rb \end{aligned}\end{equation}
on $\openstrip$, where, if $g$ is a real-valued function which is defined and twice-differentiable on some open subset $\openstrip'$ of $\openstrip$,
\begin{equation} \label{E:tgenPDef}\begin{aligned} \left(\tgenP_{\SP} g\right)(t,x,y)&\Def \left(\genP g\right)(t,x,y) - c(x,y)g(t,x,y)-\left(\frac{\partial \cE_{\SP}}{\partial y}\right)(t,x,y)\frac{\partial g}{\partial y}(t,x,y)\\
&\qquad +\Xi_{\SP}(t,x,y)g(t,x,y)\end{aligned}\end{equation}
for $(t,x,y)\in \openstrip'$.

In the standard diffusive case, the exponential part of the fundamental solution (analogous to \eqref{E:cEDef}) is given by the square of a distance divided by time.  Reversing this and keeping \eqref{E:XXYYDef} in mind, let's rewrite spatial dependence in terms of $\xx_{\SP}$ and $\yy_{\SP}$ of \eqref{E:xxyydef};
\begin{equation} \label{E:reversedcoordinates} x-x_\circ = \hypo(\SP)\xx_{\SP}(t,x)t^{3/2}-b_1(\SP)t \qquad \text{and}\qquad 
y-y_\circ= \yy_{\SP}(t,y)t^{1/2} \end{equation}
for $(\SP)\in \RR$ and $(t,x,y)\in \openstrip$.  We want to expand
$\Xi_{\SP}$ as a polynomial in $\xx_{\SP}$, $\yy_{\SP}$ and $t$, and then find a similar polynomial expansion for a solution of 
\begin{equation} \label{E:genPgoal} \tgenP_{\SP} \QCorrector_{\SP}\approx -\Xi_{\SP}\end{equation} 
on $\openstrip$.  Our main result about the size of this error will be Proposition \ref{P:CorrectorWhy}.

\subsection{Setup}
Let's start to construct a space of polynomials.  
With $\N$ as in \eqref{E:NDef}, define the index set
\begin{equation*} \Index\Def \N\times \N\times (\Z/2) \end{equation*}
and for $(p,q,r)\in \Index$, define
\begin{equation*} \Mono{p}{q}{r}(t,x,y)\Def \left(\frac{x-x_\circ+b_1(\SP)t}{\hypo(\SP)}\right)^p(y-y_\circ)^q t^r. \qquad (t,x,y)\in \openstrip \end{equation*}
Then
\begin{equation} \label{E:monoequiv} \begin{aligned} \Mono{p}{q}{r}(t,x,y)&= \xx^p_{\SP}(t,x)\yy^q_{\SP}(t,y)t^{r+(3p+q)/2} \\
\xx^p_{\SP}(t,x)\yy^q_{\SP}(t,y)t^r&=\Mono{p}{q}{r-(3p+q)/2}(t,x,y) \end{aligned}\end{equation}
for $(p,q,r)\in \Index$ and $(t,x,y)\in \openstrip$.
The set
\begin{equation*} \Monoid_{\SP}\Def \lb \Mono{p}{q}{r}: (p,q,r)\in \Index\rb \subset C^\infty(\openstrip), \end{equation*}
is a \emph{monoid} under multiplication in $C^\infty(\openstrip)$; 
$\Mono{p_1}{q_1}{r_1}\cdot \Mono{p_2}{q_2}{r_2}=\Mono{p_1+p_2}{q_1+q_2}{r_1+r_2}$
for $(p_1,q_1,r_1)$ and $(p_2,q_2,r_2)$ in $\Index$ (and with $\Mono{0}{0}{0}$ being the identity element).  Define now the algebra
\begin{equation*} \Algebra\Def \lb \sum_{(p,q,r)\in \Index'}c_{p,q,r}\Mono{p}{q}{r}: \Index'\subset \Index, |\Index'|<\infty, \{c_{p,q,r}\}_{(p,q,r)\in \Index'}\subset \R\rb \subset C^\infty(\openstrip) \end{equation*}
which corresponds to finite polynomials in $\xx_{\SP}(t,x)$ and $\yy_{\SP}(t,y)$ and a finite Laurent series in $t^{1/2}$.  Then (recalling \eqref{E:spatialderivatives})
\begin{align*} x-x_\circ &= \hypo(\SP)\Mono{1}{0}{0}(t,x,y)-b_1(\SP)\Mono{0}{0}{1}(t,x,y) \\
y-y_\circ &= \Mono{0}{1}{0}(t,x,y) \\
t&= \Mono{0}{0}{1}(t,x,y) \\
\frac{\partial \cE_{\SP}}{\partial x}(t,x,y)
&= \frac{1}{\hypo(\SP)}t^{-3/2}\lb 12\xx_{\SP}(t,x)+6\yy_{\SP}(t,y)\rb \\
    &= \frac{1}{\hypo(\SP)}\lb 12\Mono{1}{0}{-3}(t,x,y)+6\Mono{0}{1}{-2}(t,x,y)\rb \\
\frac{\partial \cE_{\SP}}{\partial y}(t,x,y) 
    &= t^{-1/2}\lb 6\xx_{\SP}(t,x)+4\yy_{\SP}(t,y)+b_2(\SP)t^{1/2}\rb \\
    &= \lb 6\Mono{1}{0}{-2}(t,x,y)+4\Mono{0}{1}{-1}(t,x,y)+b_2(\SP)\Mono{0}{0}{1/2}(t,x,y)\rb\\
    b_{1,\SP}^L(x,y)&= b_1(\SP)+\frac{\partial b_1}{\partial y}(\SP)\Mono{0}{1}{0}(t,x,y) =b_1(\SP)+\hypo(\SP)\Mono{0}{1}{0}(t,x,y)  
    \end{align*}
for $(t,x,y)\in \openstrip$.

We want to approximate \eqref{E:genPgoal} with an equation on $\Algebra$.  For $g\in C^\infty(\RR)$ and a nonnegative integer $d$, define
\begin{align*} \left(\Poly^{(d)}_{\SP}g\right)(t,x,y)
&\Def \sum_{\substack{(i,j)\in \N^2 \\ i+j\le d}}\frac{1}{i!j!}\frac{\partial^{i+j}g}{\partial x^i \partial y^j}(\SP)\left(x-x_\circ\right)^i\left(y-y_\circ\right)^j\\
&=\sum_{\substack{(i,j)\in \N^2 \\ i+j\le d}}\frac{1}{i!j!}\frac{\partial^{i+j}g}{\partial x^i \partial y^j}(\SP)\left(\hypo(\SP)\Mono{1}{0}{0}(t,x,y)-b_1(\SP)\Mono{0}{0}{1}(t,x,y)\right)^i\\
&\qquad \qquad \times \left(\Mono{0}{1}{0}(t,x,y)\right)^j
\end{align*}
for $(\SP)\in \RR$ and $(t,x,y)\in \openstrip$. Then $\Poly^{(d)}_{\SP}g\in \Algebra$ and
\begin{equation*} g(x,y)=\left(\Poly^{(d)}_{\SP}g\right)(t,x,y)+\left(\Res^{(d+1)}_{\SP}g\right)(x,y) \end{equation*}
for $(t,x,y)\in \openstrip$, where $\Res^{(d+1)}_{\SP}g$ is as in \eqref{E:TaylorRemainder}.

Let's next define a \emph{degree} map which will help us organize our calculations.  Setting
\begin{equation}\label{E:degdef} \degspace \Def \N \times (\Z/2), \end{equation}
let's define $\degree:\Monoid_{\SP}\to \degspace$ as
\begin{equation*} \degree\left(\Mono{p}{q}{r}\right) \Def \left(p+q,r+\tfrac12(3p+q)\right) \end{equation*}
for $\Mono{p}{q}{r}\in \Monoid_{\SP}$.
We then have that
\begin{equation} \label{E:degreeequality}\begin{aligned} \degree\left(\Mono{p_1}{q_1}{r_1}\Mono{p_2}{q_2}{r_2}\right) &= \degree\left(\Mono{p_1+p_2}{q_1+q_2}{r_1+r_2}\right)\\
&= \left(p_1+p_2+q_1+q_2,(r_1+r_2)+\tfrac12 \left(3(p_1+p_2)+(q_1+q_2)\right)\right)\\
&=\left(p_1+q_1,r_1+\tfrac12(3p_1+q_1)\right)+\left(p_2+q_2,r_2+\tfrac12(3p_2+q_2)\right)\\
&=\degree\left(\Mono{p_1}{q_1}{r_1}\right)+\degree\left(\Mono{p_2}{q_2}{r_2}\right) \end{aligned}\end{equation}
for any $(p_1,q_1,r_1)$ and $(p_2,q_2,r_2)$ in $\Index$, implying that $\degree$ does act as a degree map.  
Recalling \eqref{E:monoequiv}, $\degree$ maps $\xx_{\SP}^p \yy_{\SP}^q t^r$ to
\begin{equation*} (p+q,\left(r-(3p+q)/2+(3p+q)/2)=(p+q,r\right). \end{equation*}
The first component of $\degree$ keeps track of the spatial regularity, and the second component keeps track of the order as $t\searrow 0$, if we consider an asymptotic regime where $\xx_{\SP}(t,x)$ and $\yy_{\SP}(t,y)$ are both of order 1.

Let's rewrite $\Algebra$ as a graded vector space; we have that
\begin{equation*} \Algebra= \bigoplus_{(d,s)\in \degspace}\Vspace^{d,s} \end{equation*}
where, for each for each $(d,s)\in \N\times (\Z/2)$,
\begin{equation}\label{E:vectorspaces}\Vspace^{d,s}\Def \lb \sum_{\substack{(p,q,r)\in \Index \\ \degree(p,q,r)=(d,s)}}c_{p,q,r}\Mono{p}{q}{r}: \{c_{p,q,r}\}_{\substack{(p,q,r)\in \Index \\ \degree(p,q,r)=(d,s)}}\subset \R\rb = \lb \sum_{p=0}^d c_p \Mono{p}{d-p}{s-p-d/2}: \{c_p\}_{p=0}^d\subset \R\rb 
\end{equation}
is the vector space of linear combinations of elements of $\Monoid_{\SP}$ of degree $(d,s)$.  We note that $\Vspace^{d,s}$ has dimension $d+1$.  We also interpret products of $\Vspace^{d,s}$'s as subsets of $\Algebra$ (i.e., finite linear combinations of elements of $\Mono{p}{q}{r}$'s).  In particular,
$\Vspace^{d_1,s_1}\cdot \Vspace^{d_2,s_2}\subset \Vspace^{d_1+d_2,s_1+s_2}$
for $(d_1,s_1)$ and $(d_2,s_2)$ in $\Index$; the product of an element of $\Vspace^{d_1,s_1}$ and an element of $\Vspace^{d_2,s_2}$ is an element of $\Vspace^{d_1+d_2,s_1+s_2}$ (recall \eqref{E:degreeequality}).

Let's define \emph{main} $\Proj^M$ and \emph{error} $\Proj^E$ projection operators as the projections of $\Algebra$, respectively, onto 
\begin{equation*} \Vspace^M\Def \bigoplus_{\substack{(d,s)\in \degspace\\ s\le \sfrac12}}\Vspace^{d,s} \qquad \text{and}\qquad \Vspace^E\Def \bigoplus_{\substack{(d,s)\in \degspace\\ s\ge 0}}\Vspace^{d,s}.\end{equation*}
We want to project \eqref{E:genPgoal} onto an equation on $\Vspace^M$.  This will allow us to control the small-time error of \eqref{E:genPgoal}.

\subsection{Solution and Error Bounds}
Let's start with the right-hand side of \eqref{E:genPgoal}.  Let's write
\begin{equation}\label{E:XiAsSum}\Xi_{\SP} =\Xi^L_{\SP}+ \Xi^{NL}_{\SP}\end{equation}
on $\openstrip$, where
\begin{equation} \label{E:XiPartsDef} \begin{aligned}
\Xi^L_{\SP}(t,x,y)&= -\frac{ \left(\Poly_{\SP}^{(2)}b_1\right)(t,x,y)-b^L_{1,\SP}(x,y)}{\hypo(\SP)} \frac{\partial \cE_{\SP}}{\partial x}(t,x,y) \\
&=-\sum_{\substack{(i,j)\in \N^2 \\ i+j\le 2 \\ (i,j)\not\in \{(0,0),(0,1)\}}}\frac{1}{i!j!}\bnorm_{i,j}(\SP)\left(\hypo(\SP)\Mono{1}{0}{0}(t,x,y)-b_1(\SP)\Mono{0}{0}{1}(t,x,y)\right)^i\\
&\qquad \qquad \times \left(\Mono{0}{1}{0}(t,x,y)\right)^j\lb 12\Mono{1}{0}{-3}(t,x,y)+6\Mono{0}{1}{-2}(t,x,y)\rb\\
\Xi^{NL}_{\SP}(t,x,y)
&= -\left(\Res^{(3)}_{\SP}b_1\right)(x,y) \frac{\partial \cE_{\SP}}{\partial x}(t,x,y) - \left(\Res^{(1)}_{\SP}b_2\right)(t,x,y)\frac{\partial \cE_{\SP}}{\partial y}(t,x,y)+c(x,y)
\end{aligned}\end{equation}
for $(t,x,y)\in \openstrip$.
Explicitly, $\Xi^L_{\SP}\in \Algebra$, and 
\begin{equation*} \Xi^L_{\SP}=\sum_{(d,s)\in D_\Xi}\Xi^{*,d,s}_{\SP} \end{equation*}
on $\openstrip$, where
\begin{equation*} D_\Xi \Def \lb (1,- \sfrac{1}{2}),(3,- \sfrac{1}{2}),(2,0),(1,\sfrac{1}{2}),(3,\sfrac{1}{2}),(2,1),(3,\sfrac{3}{2})\rb\end{equation*}
and $\Xi^{*,d,s}_{\SP}\in \Vspace^{d,s}$ for $(d,s)\in D_\Xi$.    
Projecting on $\Vspace^M$, we have 
\begin{equation}\label{E:XiMainError}
\Proj^M\Xi^M_{\SP} =\Xi^{*,1,- \sfrac{1}{2}}_{\SP}+\Xi^{*,3,- \sfrac{1}{2}}_{\SP},
\end{equation}
where
\begin{align*}
\Xi^{*,3,- \sfrac{1}{2}}_{\SP}(t,x,y)&=-6\bnorm_{0,2}(\SP)\Mono{1}{2}{-3}(t,x,y)-3\bnorm_{0,2}(\SP)\Mono{0}{3}{-2}(t,x,y)\\
\Xi^{*,1,- \sfrac{1}{2}}_{\SP}(t,x,y)&= 12\bnorm_{1,0}(\SP)b_1(\SP)\Mono{1}{0}{-2}(t,x,y)+6\bnorm_{1,0}(\SP)b_1(\SP)\Mono{0}{1}{-1}(t,x,y)
\end{align*}
for $(t,x,y)\in \openstrip$.

Let's bound the errors in approximating $\Xi_{\SP}$ by $\Proj^M\Xi^L_{\SP}$.  We are interested in small-time ($t\searrow 0$) asymptotics comparable to powers of $t$, and we will bound spatial behavior by a term which can be dominated by the exponential decay of \eqref{E:kernelboundDef}. To formalize this, let's fix
\begin{equation*} \nu\Def \sfrac{1}{36}; \end{equation*}
then 
\begin{equation}\label{E:nuselectwhy} \sfrac16-3\nu=\sfrac{1}{12}. \end{equation}

\begin{lemma}\label{L:XiLEbound}
We have that
\begin{equation} \label{E:XiLEbound} \KK[E:XiLEbound] \Def \sup\lb \left|\Proj^E\Xi^L_{\SP}(t,x,y)\right|\exp\left[-\nu \xx_{\SP}^2(t,x) - \nu \yy_{\SP}^2(t,y)\right]: (\SP)\in \RR,\,  (t,x,y)\in \openstrip\rb \end{equation}
is finite.
\end{lemma}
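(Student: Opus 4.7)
The plan is to expand $\Proj^E\Xi^L_{\SP}$ explicitly as a finite linear combination of monomials $\xx^p_{\SP}(t,x)\yy^q_{\SP}(t,y)t^s$ with $s\ge 0$ and coefficients that are uniformly bounded in $(\SP)\in\RR$, and then to dominate each such monomial by a Gaussian in $(\xx_{\SP},\yy_{\SP})$ with rate $\nu$.

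\textbf{Step 1 (identification of $\Proj^E\Xi^L_{\SP}$).}  From \eqref{E:XiPartsDef} and the enumeration of $D_\Xi$, the $s$-components of $\Xi^L_{\SP}$ with $s\ge 0$ are precisely the pieces in $\Vspace^{2,0}$, $\Vspace^{1,1/2}$, $\Vspace^{3,1/2}$, $\Vspace^{2,1}$, and $\Vspace^{3,3/2}$, so
\begin{equation*}
\Proj^E\Xi^L_{\SP}=\Xi^{*,2,0}_{\SP}+\Xi^{*,1,\sfrac12}_{\SP}+\Xi^{*,3,\sfrac12}_{\SP}+\Xi^{*,2,1}_{\SP}+\Xi^{*,3,\sfrac32}_{\SP}.
\end{equation*}
By \eqref{E:vectorspaces}, each $\Xi^{*,d,s}_{\SP}\in\Vspace^{d,s}$ is a finite linear combination $\sum_{p=0}^{d}c^{d,s}_{p}(\SP)\,\Mono{p}{d-p}{s-(3p+(d-p))/2}$, where the coefficients $c^{d,s}_p(\SP)$ are polynomials in $b_1(\SP)$, $b_2(\SP)$, and the normalized derivatives $\bnorm_{i,j}(\SP)$ for $i+j\le 2$.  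By Assumption \ref{A:boundedness} and the bound \eqref{E:betabounds}, each $c^{d,s}_p(\SP)$ is bounded uniformly in $(\SP)\in\RR$ by a constant depending only on $\KK[E:boundedness]$ and $\KK[E:hypobound]$.

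\textbf{Step 2 (pass to $(\xx_{\SP},\yy_{\SP},t)$ coordinates).}  Using the identity \eqref{E:monoequiv}, each summand rewrites as $c^{d,s}_p(\SP)\,\xx_{\SP}^{p}(t,x)\,\yy_{\SP}^{d-p}(t,y)\,t^{s}$ with $s\in\{0,\sfrac12,1,\sfrac32\}$ and $0\le d\le 3$.  For $t\in(0,\TT)$ and $s\ge 0$ we have $t^{s}\le\max(1,\TT^{s})$, so the temporal factor is uniformly bounded.

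\textbf{Step 3 (Gaussian domination).}  For every nonnegative integer $k$,
\begin{equation*}
\sup_{\xi\in\R}|\xi|^{k}\exp[-\nu\xi^{2}]\le\left(\tfrac{k}{2\nu e}\right)^{k/2}<\infty,
\end{equation*}
so for each $(p,q)$ appearing above,
\begin{equation*}
|\xx_{\SP}^{p}(t,x)\,\yy_{\SP}^{q}(t,y)|\le C_{p,q}\exp\!\left[\nu\xx_{\SP}^{2}(t,x)+\nu\yy_{\SP}^{2}(t,y)\right]
\end{equation*}
for some finite $C_{p,q}$ independent of $(\SP)$ and $(t,x,y)$.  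Combining Steps 1--3 and taking the supremum over the finitely many index quadruples $(d,s,p,q)$ yields the finiteness of $\KK[E:XiLEbound]$.

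\textbf{Anticipated difficulty.}  There is essentially no analytic obstacle: the lemma is a bookkeeping statement, and the whole argument is driven by the fact that $\Proj^E$ kills the singular-in-$t$ terms (those with $s<0$) present in $\Xi^L_{\SP}$.  The only care required is to keep track of which components of $\Xi^L_{\SP}$ lie in which $\Vspace^{d,s}$, and to observe that every coefficient in the explicit expansion of $\Xi^L_{\SP}$ is already controlled by Assumption \ref{A:boundedness} and \eqref{E:betabounds}, since $\Xi^L_{\SP}$ is built from the quadratic Taylor polynomial of $b_1$ at $(\SP)$ minus its linear part (which involves only $\bnorm_{i,j}$ for $i+j\le 2$) and from the first two derivatives of $\cE_{\SP}$ in \eqref{E:spatialderivatives}.
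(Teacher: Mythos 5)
Your proof is correct and takes essentially the same route as the paper's: identify the graded components of $\Xi^L_{\SP}$ with nonnegative $t$-degree that survive $\Proj^E$, note that their coefficients are uniformly bounded in $(\SP)$ via Assumption \ref{A:boundedness} and \eqref{E:betabounds} (the paper phrases this as continuity of the coefficient functions on a compact parameter set), and absorb the resulting polynomial in $\xx_{\SP},\yy_{\SP}$ (with $t^s\le\max(1,\TT^s)$) into the Gaussian weight $\exp[-\nu\xx_{\SP}^2-\nu\yy_{\SP}^2]$. The only cosmetic slip is that the coefficients involve $\hypo(\SP)$ rather than $b_2(\SP)$, but since $\hypo$ is also bounded by $\KK[E:boundedness]$ this changes nothing.
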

\begin{proof}
For each $(\SP)\in \RR$, define
\begin{equation}\label{E:Pvecdef} P(\SP)\Def \left(\bnorm_{1,0}(\SP),\bnorm_{1,1}(\SP),\bnorm_{2,0}(\SP),\bnorm_{0,2}(\SP),\hypo(\SP),b_1(\SP)\right)\in \R^6.\end{equation}
Then we can write
\begin{align*} \Proj^E\Xi^L_{\SP} &= \sum_{\substack{(d,s)\in D_\Xi \\ (d,s)\not \in \{(1,-\sfrac12),(3,-\sfrac12)\}\\
0\le p\le d}}c_{p,d-p,s-p-d/2}\left(P(\SP)\right)\Mono{p}{d-p}{s-p-d/2} 
\end{align*}
where the $c_{p,q,r}$'s are continuous.  Bounding the various terms in \eqref{E:Pvecdef}, we have that
\begin{equation*}
\lb P(\SP): \SP\in \RR\rb \subset R\Def \left[-\KK[E:hypobound],\KK[E:hypobound]\right]^4\times \left[-\KK[E:boundedness],\KK[E:boundedness]\right]^2, \end{equation*}
Since $R$ is a compact subset of $\R^6$ and the $c_{p,q,r}$'s are continuous,
\begin{equation*}\kk\Def \sup\lb \left|c_{p,q,s-p-d/2}(\rho)\right|: (d,s)\in D_\Xi,\, (d,s)\not \in \{(1,-\sfrac12),(3,-\sfrac12)\},\, 0\le p\le d,\, \rho\in R\rb \end{equation*}
is finite.  Thus
\begin{equation*} \left|\Proj^E\Xi^L_{\SP}(t,x,y)\right|\le \kk \Gamma\left(t,\xx_{\SP}(t,x),\yy_{\SP}(t,y)\right) \end{equation*}
where
\begin{equation*}\Gamma(t,X,Y)
\Def \sum_{\substack{(d,s)\in D_\Xi \\ (d,s)\not \in \{(1,-\sfrac12),(3,-\sfrac12)\}\\
0\le p\le d}}|X|^p|Y|^{d-p}t^s. \end{equation*}
Since
\begin{equation*} \sup_{\substack{ X,Y\in \R \\ 0<t< \TT}}\left|\Gamma(t,X,Y)\right|\exp\left[-\nu X^2 - \nu Y^2 \right]\end{equation*}
is finite, the claim follows.
\end{proof}

To bound $\Xi^{NL}_{\SP}$ of \eqref{E:XiPartsDef}, we will need a refinement of \eqref{E:Tbounds} and \eqref{E:Tboundsb1}.
\begin{lemma}\label{L:TaylorBounds} We have that
\begin{align} \label{E:wheel1} \KK[E:wheel1] &\Def \sup_{\substack{(\SP)\in \RR \\ (t,x,y)\in \openstrip\\ d\in \{1,2,3\}}}\frac{\left|\left(\Res^{(d)}_{\SP}b_1\right)(x,y)\right|}{\hypo(\SP)}t^{-d/2}\exp\left[-\nu \xx_{\SP}^2(t,x) - \nu \yy_{\SP}^2(t,y)\right] \\
\label{E:wheel2} \KK[E:wheel2]&\Def \sup_{\substack{(\SP)\in \RR \\ (t,x,y)\in \openstrip}}\left|\left(\Res^{(1)}_{\SP}b_2\right)(x,y)\right|t^{-1/2}\exp\left[-\nu \xx_{\SP}^2(t,x) - \nu \yy_{\SP}^2(t,y)\right] \end{align}
are finite.
\end{lemma}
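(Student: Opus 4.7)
The plan is to apply the Taylor remainder bounds \eqref{E:Tboundsb1} and \eqref{E:Tbounds} and re-express the spatial differences $|x-x_\circ|$ and $|y-y_\circ|$ in the scaled coordinates $\xx_{\SP}$, $\yy_{\SP}$ of \eqref{E:xxyydef} via \eqref{E:reversedcoordinates}. A Gaussian-dominates-polynomial-times-linear-exponential argument will then absorb all residual growth into $\exp[-\nu\xx_{\SP}^2-\nu\yy_{\SP}^2]$.

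First I would fix $(\SP)\in\RR$ and $(t,x,y)\in\openstrip$. Using \eqref{E:reversedcoordinates}, Assumption \ref{A:boundedness}, and \eqref{E:hypoisbounded}, I would bound
\begin{equation*}
|x-x_\circ|+|y-y_\circ|\le \KK[E:boundedness]|\xx_{\SP}(t,x)|t^{3/2}+\KK[E:boundedness]t+|\yy_{\SP}(t,y)|t^{1/2}\le t^{1/2}\Lambda_{\SP}(t,x,y),
\end{equation*}
where $\Lambda_{\SP}(t,x,y)\Def \KK[E:boundedness]\TT|\xx_{\SP}(t,x)|+\KK[E:boundedness]\TT^{1/2}+|\yy_{\SP}(t,y)|$, obtained by using $t\le\TT$. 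The crucial point is that every term in $|x-x_\circ|+|y-y_\circ|$ carries at least one factor of $t^{1/2}$, so the prefactor $t^{-d/2}$ in \eqref{E:wheel1} is exactly cancelled.

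Substituting into \eqref{E:Tboundsb1} and multiplying by $t^{-d/2}$ then gives
\begin{equation*}
\frac{|(\Res^{(d)}_{\SP}b_1)(x,y)|}{\hypo(\SP)}t^{-d/2}\le \frac{\KK[E:hypobound]}{d!}\exp\bigl[\KK[E:hypobound]\TT^{1/2}\Lambda_{\SP}(t,x,y)\bigr]\Lambda_{\SP}(t,x,y)^d
\end{equation*}
for $d\in\{1,2,3\}$. The right-hand side is a polynomial of degree $d$ in $(|\xx_{\SP}|,|\yy_{\SP}|)$ times $\exp[c_1+c_2(|\xx_{\SP}|+|\yy_{\SP}|)]$ for constants $c_1,c_2$ depending only on $\KK[E:boundedness]$, $\KK[E:hypobound]$, $\TT$. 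Multiplying by $\exp[-\nu\xx_{\SP}^2-\nu\yy_{\SP}^2]$ and invoking the elementary fact
\begin{equation*}
\sup_{(X,Y)\in \R^2}|X|^p|Y|^q\exp\bigl[c_2(|X|+|Y|)-\nu X^2-\nu Y^2\bigr]<\infty
\end{equation*}
for each $(p,q)\in\N^2$ gives \eqref{E:wheel1}.

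The bound \eqref{E:wheel2} follows identically, with \eqref{E:Tbounds} replacing \eqref{E:Tboundsb1} and no $\hypo$ in the denominator. There is no substantive obstacle — the lemma is essentially a repackaging of the previously established Taylor estimates into the scaled coordinate system, together with the observation that Gaussian decay trumps polynomial and sub-Gaussian exponential growth.
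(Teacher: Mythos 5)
Your proposal is correct and follows essentially the same route as the paper: bound $|x-x_\circ|+|y-y_\circ|\le t^{1/2}\bigl(\KK[E:boundedness]\TT|\xx_{\SP}(t,x)|+|\yy_{\SP}(t,y)|+\KK[E:boundedness]\TT^{1/2}\bigr)$ via \eqref{E:reversedcoordinates} so that $t^{-d/2}$ is cancelled, insert this into \eqref{E:Tboundsb1} and \eqref{E:Tbounds}, and absorb the remaining polynomial and linear-exponential growth in $|\xx_{\SP}|,|\yy_{\SP}|$ into the Gaussian factor $\exp[-\nu\xx_{\SP}^2-\nu\yy_{\SP}^2]$. The only cosmetic difference is that the paper performs this last absorption by first converting the polynomial to an exponential via \eqref{E:firstexpbound} and then applying Young's inequality with $\eps$, whereas you invoke the equivalent elementary fact that a Gaussian dominates polynomial-times-exponential-of-linear terms, which is the same device the paper itself uses in the neighboring lemmas.
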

\begin{proof}  Fix $(\SP)\in \RR$ and $(t,x,y)$ in $\openstrip$.  Starting with \eqref{E:reversedcoordinates}, let's first calculate that
\begin{align*}
     |x-x_\circ|+|y-y_\circ|
    &\le  \left|\xx_{\SP}(t,x)\right| \left|\tfrac{\partial b_1}{\partial y}(\SP)\right| t^{3/2} + |b_1(\SP)| t+\left|\yy_{\SP}(t,y)\right|t^{1/2}\\
    &\le t^{1/2} \lb \KK[E:boundedness]\left|\xx_{\SP}(t,x)\right| \TT + \left|\yy_{\SP}(t,y)\right|+\KK[E:boundedness]\TT^{1/2}\rb.
    \end{align*}
Using \eqref{E:firstexpbound}, we then have that
    \begin{equation*}
    \lb |x-x_\circ|+|y-y_\circ|\rb^d \le t^{d/2}d!\exp\left[\KK[E:boundedness]\left|\xx_{\SP}(t,x)\right| \TT + \left|\yy_{\SP}(t,y)\right|+\KK[E:boundedness]\TT^{1/2}\right]\end{equation*}
for $d\in \{1,2,3\}$.

We then get from \eqref{E:Tbounds} and \eqref{E:Tboundsb1} that 
\begin{align*}
\left|\left(\Res^{(1)}_{\SP}b_2\right)(x,y)\right|&\le \KK[E:boundedness]t^{1/2}\exp\left[\KK[E:boundedness]\left|\xx_{\SP}(t,x)\right| \TT + \left|\yy_{\SP}(t,y)\right|+\KK[E:boundedness]\TT^{1/2}\right]\\
\left|\frac{\left(\Res^{(d)}_{\SP}b_1\right)(x,y)}{\hypo(\SP)}\right|
&\le \KK[E:hypobound]t^{d/2}\exp\left[\left(1+\KK[E:hypobound]\TT^{1/2}\right)\lb \KK[E:boundedness]\left|\xx_{\SP}(t,x)\right|\TT+\left|\yy_{\SP}(t,y)\right|+ \KK[E:boundedness]\TT^{1/2}\rb\right].
\end{align*}
Using Young's inequality with $\eps$,
\begin{align*}&\left(1+\KK[E:hypobound]\TT^{1/2}\right)\lb \KK[E:boundedness]\left|\xx_{\SP}(t,x)\right|\TT+\left|\yy_{\SP}(t,y)\right|+ \KK[E:boundedness]\TT^{1/2}\rb\\
&\qquad =  \KK[E:boundedness]\left(1+\KK[E:hypobound]\TT^{1/2}\right)\TT\left|\xx_{\SP}(t,x)\right|+\left(1+\KK[E:hypobound]\TT^{1/2}\right)\left|\yy_{\SP}(t,y)\right|+ \KK[E:boundedness]\left(1+\KK[E:hypobound]\TT^{1/2}\right)\TT^{1/2}\\
&\qquad \le \nu \xx_{\SP}^2(t,x)+\nu\yy_{\SP}^2(t,y)+\frac{4}{\nu}\KK[E:boundedness]^2\left(1+\KK[E:hypobound]\TT^{1/2}\right)^2\TT^2+\frac{4}{\nu}\left(1+\KK[E:hypobound]\TT^{1/2}\right)^2\\
&\qquad \qquad +\KK[E:boundedness]\left(1+\KK[E:hypobound]\TT^{1/2}\right)\TT^{1/2}.
\end{align*}
The claim follows.
\end{proof}

\begin{lemma}\label{L:XiNLbound} We have that
\begin{equation} \label{E:XiNLbound}  \KK[E:XiNLbound] \Def \sup_{\substack{(\SP)\in \RR \\ (t,x,y)\in \openstrip}}\left|\Xi^{NL}_{\SP}(t,x,y)\right|\exp\left[-2\nu \xx_{\SP}^2(t,x) - 2\nu \yy_{\SP}^2(t,y)\right]\end{equation}
is finite.
\end{lemma}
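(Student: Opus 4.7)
Split $\Xi^{NL}_{\SP}$ into the three summands appearing in \eqref{E:XiPartsDef}. The last summand, $c(x,y)$, is dominated by $\KK[E:boundedness]$ under Assumption \ref{A:boundedness}, so it contributes a uniform constant. For the other two terms the guiding principle is that each Taylor remainder (Lemma \ref{L:TaylorBounds}) supplies a factor $t^{d/2}$ times an $e^{\nu(\xx_{\SP}^2+\yy_{\SP}^2)}$ tail, while the matching derivative of $\cE_{\SP}$ from \eqref{E:spatialderivatives}, after using \eqref{E:differences} to rewrite $\XX_{\SP},\YY_{\SP}$ in terms of $\xx_{\SP},\yy_{\SP}$, supplies the reciprocal $t^{-d/2}$ times a polynomial of degree one in $|\xx_{\SP}|,|\yy_{\SP}|$. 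These $t$-powers cancel, and the factor of $\hypo(\SP)$ appearing in $\partial_x\cE_{\SP}$ cancels the one already present in \eqref{E:wheel1}.

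Concretely, for the first summand, \eqref{E:wheel1} with $d=3$ combined with the bound
\[
\left|\tfrac{\partial \cE_{\SP}}{\partial x}(t,x,y)\right|\le \frac{12|\xx_{\SP}(t,x)|+6|\yy_{\SP}(t,y)|+18\KK[E:boundedness]\,\TT^{1/2}}{\hypo(\SP)\,t^{3/2}}
\]
yields a bound of the form $P_1(|\xx_{\SP}|,|\yy_{\SP}|)\,e^{\nu(\xx_{\SP}^2+\yy_{\SP}^2)}$ with $P_1$ of degree one. The second summand is handled symmetrically using \eqref{E:wheel2} with $d=1$ together with the analogous estimate on $\partial_y\cE_{\SP}$, where the cancellation is now between $t^{1/2}$ and $t^{-1/2}$. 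Both contributions therefore take the same form: a degree-one polynomial in $|\xx_{\SP}|,|\yy_{\SP}|$ times $e^{\nu(\xx_{\SP}^2+\yy_{\SP}^2)}$.

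It only remains to upgrade the exponential weight from $\nu$ to $2\nu$ so as to absorb the polynomial prefactors. This is the elementary observation that, for any nonnegative integers $p,q$,
\[
|X|^p|Y|^q\, e^{\nu(X^2+Y^2)}\le C_{p,q,\nu}\, e^{2\nu(X^2+Y^2)},
\]
since $X\mapsto |X|^p e^{-\nu X^2}$ attains a finite maximum on $\R$. Applying this with $X=\xx_{\SP}(t,x)$, $Y=\yy_{\SP}(t,y)$ and summing all three contributions yields \eqref{E:XiNLbound}. The only non-mechanical input is recognizing that Lemma \ref{L:TaylorBounds} was precisely engineered to produce the $t^{d/2}$-and-$\hypo(\SP)$ scaling that makes this cancellation go through; the rest of the argument is essentially bookkeeping.
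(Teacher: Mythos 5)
Your proposal is correct and follows essentially the same route as the paper's proof: apply Lemma \ref{L:TaylorBounds} to the two remainder terms, pair the $t^{d/2}$ and $\hypo(\SP)$ factors with the corresponding factors in $\partial_x\cE_{\SP}$ and $\partial_y\cE_{\SP}$ expressed through $\xx_{\SP},\yy_{\SP}$, bound $c$ by $\KK[E:boundedness]$, and absorb the resulting degree-one polynomial prefactors by upgrading the Gaussian weight from $\nu$ to $2\nu$. The only cosmetic difference is that the paper uses the exact identity $12\XX_{\SP}+6\YY_{\SP}=12\xx_{\SP}+6\yy_{\SP}$ where you add a harmless $18\KK[E:boundedness]\TT^{1/2}$ via \eqref{E:differences}.
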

\begin{proof}  
Fix $(t,x,y)\in \openstrip$.  Using Lemma \ref{L:TaylorBounds}, 
\begin{equation*} \left|\Xi^{NL}_{\SP}(t,x,y)\right|\le \Gamma\left(t,\xx_{\SP}(t,x),\yy_{\SP}(t,y))\right) \end{equation*}
where
\begin{align*}
\Gamma(t,X,Y)&\Def   \KK[E:wheel1] t^{3/2}\exp\left[\nu X^2+\nu Y^2\right] t^{-3/2}
\lb 12\left|X\right|+6\left|Y\right|\rb \\
&\qquad + \KK[E:wheel2]t^{1/2}\exp\left[\nu X+\nu Y^2\right] t^{-1/2}\lb 6\left|X\right|+4\left|Y\right|+\KK[E:boundedness]t^{1/2} \rb +\KK[E:boundedness]\\
&=\KK[E:wheel1]\exp\left[\nu X^2+\nu Y^2\right] 
\lb 12\left|X\right|+6\left|Y\right|\rb \\
&\qquad + \KK[E:wheel2]\exp\left[\nu X^2+\nu Y^2\right] \lb 6\left|X\right|+4\left|Y\right|+\KK[E:boundedness]t^{1/2} \rb +\KK[E:boundedness].
\end{align*}
Since
\begin{equation*} \sup_{\substack{ X,Y\in \R \\ 0<t< \TT}}\left|\Gamma(t,X,Y)\right|\exp\left[-2\nu X^2-2\nu  Y^2 \right] \end{equation*}
is finite, the claim follows.
\end{proof}
\noindent These asymptotics guided our choice of the degree to which we carried out the Taylor expansion of $b_1$ (2 is the smallest degree of the Taylor expansion of $b_1$ in \eqref{E:XiPartsDef} for which we get the asymptotics of Lemma \ref{L:XiNLbound}).

Let's next look at how $\tgenP_{\SP}$ of \eqref{E:tgenPDef} acts on elements of $\Algebra$.   We start by constructing some partial differential operators particularly suited to the $\Mono{p}{q}{r}$'s.  For $(t,x,y)\in \openstrip$, define
\begin{equation*}  (\VV^T_{\SP} g)(t,x,y)=\frac{\partial g}{\partial t}(t,x,y)-b_1(\SP)\frac{\partial g}{\partial x}(t,x,y) \qquad \text{and}\qquad 
(\VV^X_{\SP} g)(t,x,y)=\hypo(\SP)\frac{\partial g}{\partial x}(t,x,y)\end{equation*}
for $g\in \Algebra$.
For $(p,q,r)\in \Index$, we then have
\begin{gather*} \VV^T_{\SP} \Mono{p}{q}{r}=\begin{cases} r\Mono{p}{q}{r-1}&  \text{if $r>0$} \\
0 &\text{if $r\not =0$}\end{cases} \qquad \text{and}\qquad
\VV^X_{\SP} \Mono{p}{q}{r}=\begin{cases} p\Mono{p-1}{q}{r} &\text{if $p>0$} \\
0 &\text{if $p=0$}\end{cases} \\
\qquad \text{and}\qquad \frac{\partial \Mono{p}{q}{r}}{\partial y} =\begin{cases} q\Mono{p}{q-1}{r}&\text{if $q>0$} \\
0 &\text{if $q=0$}\end{cases}\end{gather*}
on $\openstrip$, which is similar to standard differentiation of monomials.
Defining
\begin{equation}\label{E:zerospaces} \Vspace^{d,s}\Def \{0\}\subset \Algebra \end{equation}
for $d<0$ and $s\in \Z/2$, \eqref{E:degdef} then implies that
\begin{equation}\label{E:differentialinclusionsb} \VV^T_{\SP}:\Vspace^{d,s}\to \Vspace^{d,s-1}, \quad \VV^X_{\SP}:\Vspace^{d,s}\to \Vspace^{d-1,s-\sfrac{3}{2}}, \quad \text{and}\quad 
\frac{\partial}{\partial y}:\Vspace^{d,s}\to \Vspace^{d-1,s-\sfrac{1}{2}} \end{equation}
for $(d,s)\in \degspace$.
For $g\in \Algebra$, we can write
\begin{align*} \left(\tgenP_{\SP} g\right)(t,x,y) &= \frac12\frac{\partial^2 g}{\partial y^2}(t,x,y)+ \frac{b_1(x,y)-b_1(\SP)}{\hypo(\SP)}\left(\hypo(\SP)\frac{\partial g}{\partial x}(t,x,y)\right)\\
&\qquad +\lb b_2(x,y)-\frac{\partial \cE_{\SP}}{\partial y}(t,x,y)\rb \frac{\partial g}{\partial y}(t,x,y)\\
&\qquad -\lb \frac{\partial g}{\partial t}(t,x,y)-b_1(\SP)\frac{\partial g}{\partial x}(t,x,y)\rb
+\Xi_{\SP}(t,x,y)g(t,x,y) \\
&= \frac12\frac{\partial^2 g}{\partial y^2}(t,x,y)+ \frac{b_1(x,y)-b_1(\SP)}{\hypo(\SP)}(\VV^X_{\SP}g)(t,x,y)\\
&\qquad +\lb b_2(x,y)-\frac{\partial \cE_{\SP}}{\partial y}(t,x,y)\rb \frac{\partial g}{\partial y}(t,x,y)\\
&\qquad -\left(\VV^T_{\SP}g\right)(t,x,y)
+\Xi_{\SP}(t,x,y)g(t,x,y) 
\end{align*}
for $g\in \Algebra$ and $(t,x,y)\in \openstrip$.
Iterating the last inclusion of \eqref{E:differentialinclusionsb}, we have that
\begin{equation}\label{E:opa} \frac{\partial^2}{\partial y^2}:\Vspace^{d,s} \to \Vspace^{d-2,s-1}\end{equation}
for $(d,s)\in \degspace$.

Let's carry out a decomposition of $\tgenP_{\SP}$ similar to that of \eqref{E:XiAsSum}.  For $g\in \Algebra$, define
\begin{equation} \label{E:tgenPLstuff}\begin{aligned}
\left(\tgenP_{\SP}^L g\right)(t,x,y)  &= \frac12\frac{\partial^2 g}{\partial y^2}(t,x,y)+ \frac{\left(\Poly^{(1)}_{\SP}b_1\right)(t,x,y)-b_1(\SP)}{\hypo(\SP)}(\VV^X_{\SP}g)(t,x,y)\\
&\qquad -\frac{\partial \cE_{\SP}}{\partial y}(t,x,y) \frac{\partial g}{\partial y}(t,x,y)\\
&\qquad -\left(\VV^T_{\SP}g\right)(t,x,y)
+\left(\Proj^M \Xi^L_{\SP}\right)(t,x,y)g(t,x,y) \\
\left(\tgenP_{\SP}^{NL} g\right)(t,x,y) &= \frac{\left(\Res^{(2)}_{\SP}b_1\right)(t,x,y)}{\hypo(\SP)}(\VV^X_{\SP}g)(t,x,y)+b_2(x,y) \frac{\partial g}{\partial y}(t,x,y)\\
&\qquad +\lb \left(\Proj^E\Xi^L_{\SP}(t,x,y)\right)(t,x,y)+\Xi^{NL}_{\SP}(t,x,y)\rb g(t,x,y) 
\end{aligned}\end{equation}
for $(t,x,y)\in \openstrip$.  For $g\in \Algebra$, we then have that
\begin{equation*} \tgenP_{\SP}g= \tgenP_{\SP}^Lg+\tgenP_{\SP}^{NL}g\end{equation*}
on $\openstrip$.
Decomposing $\tgenP_{\SP}^L$ into its actions on the $\Vspace^{d,s}$'s, we have
\begin{equation} \label{E:tgenPDecomp}\tgenP_{\SP}^L=\sum_{(d',s')\in D_{\genP}} \tgenP_{\SP}^{*,d',s'}\end{equation}
where
\begin{equation*} D_{\genP}\Def \lb (-2,-1),(0,-1),(1,-\sfrac12),(3,-\sfrac12),(-1,-\sfrac12),(-1,0),(0,0) \rb \end{equation*}
and where
\begin{equation} \label{E:GenPActionsParts} \tgenP_{\SP}^{*,d',s'}:\Vspace^{d,s}\to \Vspace^{d+d',s+s'}. \end{equation}

We want to invert $\tgenP_{\SP}^L$ on $\Proj^M\Xi^L_{\SP}$ of \eqref{E:XiMainError}.  We note that $\Proj^M \Xi^L_{\SP}$ takes values in $\Vspace^{1,-\sfrac12}\oplus \Vspace^{3,-\sfrac12}$.  Let's focus on the two operators 
\begin{equation*} \tgenP_{\SP}^{*,0,-1} =\Mono{0}{1}{0}\VV^X_{\SP}-\lb 6\Mono{1}{0}{-2}+4\Mono{0}{1}{-1}\rb\frac{\partial}{\partial y} - \VV^T_{\SP}\quad \text{and}\quad 
\tgenP_{\SP}^{*,-2,-1}=\frac12 \frac{\partial^2}{\partial y^2}.
\end{equation*}
Keeping \eqref{E:zerospaces},  \eqref{E:differentialinclusionsb}, and \eqref{E:opa} in mind, we have that
\begin{equation*}\tgenP_{\SP}^{*,0,-1}+\tgenP_{\SP}^{*,-2,-1}:\Vspace^{1,\sfrac12}\oplus \Vspace^{3,\sfrac12}\to \Vspace^{1,-\sfrac12}\oplus \Vspace^{3,-\sfrac12} \end{equation*}
and $\tgenP_{\SP}^{*,0,-1}+\tgenP_{\SP}^{*,-2,-1}$ has block representation
\begin{equation} \label{E:triangularmatrix}
\begin{blockarray}{ccc}
\Vspace^{1,\sfrac{1}{2}}&\Vspace^{3,\sfrac{1}{2}}&&\\ 
\begin{block}{[cc]c}
\tgenP_{\SP}^{*,0,-1}&\tgenP_{\SP}^{*,-2,-1}&\Vspace^{1,-\sfrac{1}{2}}\\ 
0&\tgenP_{\SP}^{*,0,-1}&\Vspace^{3,-\sfrac{1}{2}}&\\
\end{block}\end{blockarray}.\end{equation}

Let's explicitly write \eqref{E:triangularmatrix} in the basis of the $\Mono{p}{q}{r}$'s of \eqref{E:vectorspaces}.
Fixing $d\in \{1,3\}$, the action of $\tgenP_{\SP}^{*,0,-1}$ on a generic element of $\Vspace^{d,\sfrac12}$ can be written as
\begin{align*}
&\tgenP_{\SP}^{*,0,-1}\left(\sum_{p=0}^d c_p \Mono{p}{d-p}{\sfrac12-p-d/2}\right) \\
    &\qquad = \sum_{p=1}^d c_p p \Mono{p-1}{d-p+1}{\sfrac12-p-d/2} - 6\sum_{p=0}^{d-1} c_p (d-p)\Mono{p+1}{d-p-1}{\sfrac12-p-d/2-2}\\
    &\qquad \qquad - 4\sum_{p=0}^{d-1} c_p (d-p) \Mono{p}{d-p}{\sfrac12-p-d/2-1} -\sum_{p=0}^d c_p (\sfrac12-p-d/2)\Mono{p}{d-p}{\sfrac12-p-d/2-1} \\
    &\qquad = \sum_{p=0}^{d-1} c_{p+1} (p+1) \Mono{p}{d-p}{-\sfrac12-p-d/2} - 6\sum_{p=1}^d c_{p-1} (d-p+1)\Mono{p}{d-p}{-\sfrac12-p-d/2}\\
    &\qquad \qquad - 4\sum_{p=0}^{d-1} c_p (d-p) \Mono{p}{d-p}{-\sfrac12-p-d/2}-\sum_{p=0}^d c_p (\sfrac12-p-d/2)\Mono{p}{d-p}{-\sfrac12-p-d/2} \\
    &\qquad = \lb -\tfrac12(7d+1) c_0+c_1\rb \Mono{0}{d}{-\sfrac12-d/2} \\
    &\qquad \qquad +\sum_{0<0<d-1} \lb -6(d+1-p)c_{p-1}-\left( \tfrac12(7d+1)-5p\right)c_p+(1+p)c_{p+1}\rb \Mono{p}{d-p}{-\sfrac12-p-d/2}\\
     &\qquad \qquad + \lb -6c_{d-1}+ \tfrac12(3d-1)c_d\rb \Mono{d}{0}{-\sfrac12-3d/2}.
\end{align*}
Similarly, the action of  $\tgenP_{\SP}^{*,-2,-1}$ on a generic element of $\Vspace^{3,\sfrac12}$ can be written as
\begin{equation*} \tgenP_{\SP}^{*,-2,-1}\left(c_0 \Mono{0}{3}{-1}+c_1 \Mono{1}{2}{-2}+c_2 \Mono{2}{1}{-3}+c_3 \Mono{3}{0}{-4}\right) = 3c_0\Mono{0}{1}{-1}+c_1\Mono{1}{0}{-2}. \end{equation*}
Combining these, the action of $\tgenP_{\SP}^{*,0,-1}+\tgenP_{\SP}^{*,-2,-1}$ on a generic element of $\Vspace^{1,\sfrac12}\oplus \Vspace^{3,\sfrac12}$ can be written as the matrix 
\begin{equation}\label{E:bigblock}
\mathbf{P}\Def \begin{blockarray}{ccccccc}
    \Mono{0}{1}{0} & \Mono{1}{0}{-1} & \Mono{0}{3}{-1} & \Mono{1}{2}{-2} & \Mono{2}{1}{-3} & \Mono{3}{0}{-4} \\ 
    \begin{block}{[cccccc]c}
    -4 & 1 & 3 & 0 & 0 & 0 & \Mono{0}{1}{-1}\\
    -6 & 1 & 0 & 1 & 0 & 0 & \Mono{1}{0}{-2}\\
    0 &  0 &-11 & 1 & 0 & 0 & \Mono{0}{3}{-2}\\
    0 & 0 &-18 & -6 & 2 & 0 & \Mono{1}{2}{-3}\\
    0 &  0 & 0 & -12 & -1 & 3 & \Mono{2}{1}{-4}\\
    0 &  0 & 0 & 0 & -6 & 4& \Mono{3}{0}{-5}.\\
    \end{block}
    \end{blockarray}
\end{equation}

\begin{lemma}
The operator $\tgenP_{\SP}^{*,0,-1}+\tgenP_{\SP}^{*,-2,-1}$ is invertible on $\Vspace^{1,-\sfrac12}\oplus\Vspace^{3,-\sfrac12}$.
\end{lemma}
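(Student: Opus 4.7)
The plan is to exploit the block upper-triangular structure already displayed in \eqref{E:triangularmatrix}. The bottom-left block vanishes for a free reason: $\tgenP_{\SP}^{*,-2,-1}=\tfrac12\partial^2/\partial y^2$ lowers the spatial degree by $2$, so by \eqref{E:opa} it sends $\Vspace^{1,\sfrac12}$ into $\Vspace^{-1,-\sfrac12}$, which is $\{0\}$ by convention \eqref{E:zerospaces}. Hence invertibility of the full operator is equivalent to invertibility of each diagonal block, i.e.\ of $\tgenP_{\SP}^{*,0,-1}$ viewed as a map $\Vspace^{d,\sfrac12}\to\Vspace^{d,-\sfrac12}$ for $d=1$ and $d=3$ separately.

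I would then just read the two diagonal blocks off from the explicit matrix $\mathbf{P}$ in \eqref{E:bigblock} and check that their determinants are nonzero. The upper-left $2\times 2$ block is $\bigl(\begin{smallmatrix}-4 & 1\\ -6 & 1\end{smallmatrix}\bigr)$, whose determinant is $2$. The lower-right $4\times 4$ block is sparse (its first and last columns each have only two nonzero entries), so cofactor expansion along one of those columns reduces the determinant calculation to a handful of $2\times 2$ evaluations; the value comes out to a nonzero numerical constant. Both diagonal blocks being invertible, the full block-triangular map is invertible, and one may even write down the inverse explicitly by standard block back-substitution.

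The only conceptual point worth emphasizing in the write-up is that the entries of $\mathbf{P}$ are indeed numerical constants with no dependence on the base point $(\SP)$: all the $(\SP)$-dependence contributed by $b_1(\SP)$, $\hypo(\SP)$ and $b_2(\SP)$ has already been absorbed into the monomials $\Mono{p}{q}{r}$ through the change of variables \eqref{E:reversedcoordinates} and the definitions in \eqref{E:tgenPLstuff}. Consequently the inverse operator is bounded uniformly in $(\SP)\in\RR$, which is exactly the property that will be required in Section \ref{S:Corrections} when it is applied to $\Proj^M\Xi^L_{\SP}\in\Vspace^{1,-\sfrac12}\oplus\Vspace^{3,-\sfrac12}$ to define $\QCorrector_{\SP}$ and to bound $\genP\kernelQ_{\SP}$. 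I do not anticipate a real obstacle here; the only mildly unpleasant step is the $4\times 4$ determinant, which is purely arithmetic.
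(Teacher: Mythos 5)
Your proposal is correct and takes essentially the same route as the paper: both exploit the block upper-triangular form of $\mathbf{P}$ in \eqref{E:bigblock} and reduce invertibility to the nonvanishing of the determinants of the two diagonal blocks, which the paper evaluates as $2$ and $120$, giving $\det\mathbf{P}=240\neq 0$. The only step you leave implicit is the arithmetic for the $4\times 4$ block, which indeed equals $120$, so there is no gap.
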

\begin{proof} 
Since the matrix $\mathbf{P}$ of \eqref{E:bigblock} is upper triangular, its determinant is
\begin{equation*} \det \begin{pmatrix} -4 & 1 \\ -6 & 1 \end{pmatrix}\times \det \begin{pmatrix} -11 & 1 & 0 & 0 \\ -18 & -6 & 2 & 0 \\ 0 & -12 & -1 & 3 \\ 0 & 0 & -6 & 4 \end{pmatrix} =2\times 120=240\not = 0 \end{equation*}
and the claim follows.
\end{proof}

Let's thus solve
\begin{equation} \label{E:zwounds}\begin{pmatrix} \chi_{0,1,0}(\alpha_1,\alpha_2) \\ \chi_{1,0,-1}(\alpha_1,\alpha_2) \\ \chi_{0,3,-1}(\alpha_1,\alpha_2) \\ \chi_{1,2,-2}(\alpha_1,\alpha_2) \\ \chi_{2,1,-3}(\alpha_1,\alpha_2) \\ \chi_{3,0,-4}(\alpha_1,\alpha_2) \end{pmatrix} = -\mathbf{P}^{-1}\begin{pmatrix} 6\alpha_1 \\ 12\alpha_2\\
-3\alpha_2 \\
-6\alpha_2 \\
0\\
0
\end{pmatrix} \end{equation}
and then define
\begin{equation} \begin{aligned}\label{E:zwoundsprime}\QCorrector_{\SP}(t,x,y) &= \sum_{p=0}^1 \chi_{p,1-p,-p}\left(\bnorm_{1,0}(\SP)b_1(\SP),\bnorm_{0,2}(\SP)\right) \Mono{p}{1-p}{-p}(t,x,y)\\
&\qquad +\sum_{p=0}^3 \chi_{p,3-p,-1-p}\left(\bnorm_{1,0}(\SP)b_1(\SP),\bnorm_{0,2}(\SP)\right) \Mono{p}{3-p}{-1-p}(t,x,y).\end{aligned}\end{equation}
Tracing through our notation, we then have that
\begin{equation*}\left(\tgenP_{\SP}^{*,0,-1}+\tgenP_{\SP}^{*,-2,-1}\right)\QCorrector_{\SP} 
=-\Xi^{*,1,-\sfrac12}_{\SP}-\Xi^{*,3,-\sfrac12}_{\SP}. \end{equation*}
Using \eqref{E:tgenPDecomp}, we can write
\begin{equation} \label{E:throwaway}\tgenP_{\SP}^L\QCorrector_{\SP} = \left(\tgenP_{\SP}^{*,0,-1}+\tgenP_{\SP}^{*,-2,-1}\right)\QCorrector_{\SP} + \sum_{\substack{(d',s')\in D_{\genP} \\ s'\ge -\sfrac12}} \tgenP_{\SP}^{*,d',s'}\QCorrector_{\SP}.
\end{equation}
Recalling \eqref{E:GenPActionsParts}, the fact that $\QCorrector_{\SP}\in \Vspace^{1,\sfrac12}\oplus \Vspace^{3,\sfrac12}$,  implies that
$\tgenP_{\SP}^{*,d',s'}\QCorrector_{\SP}\in \Vspace^E$
for all $(d',s')\in D_{\genP}$ such that $s\ge -\sfrac12$.
Thus
\begin{equation}\label{E:projectedequation}
\Proj^M\tgenP_{\SP}^L\QCorrector_{\SP} = \left(\tgenP_{\SP}^{*,0,-1}+\tgenP_{\SP}^{*,-2,-1}\right)\QCorrector_{\SP}= -\Proj^M\Xi^L_{\SP}. \end{equation}
which is the desired projection of \eqref{E:genPgoal}.
\begin{remark}
In constructing and then solving \eqref{E:projectedequation}, we have transformed \eqref{E:genPgoal} to the coordinate system adapted to the small-time asymptotics of $\kernel$ of \eqref{E:kernelDef}.
\end{remark}

Let's start to bound various errors.  Since each of the $\chi_{p,q,r}$'s of \eqref{E:zwounds} is continuous in $\alpha_1$ and $\alpha_2$.  compactness implies that
\begin{equation}\label{E:QCoefficientsBounds} \KK[E:QCoefficientsBounds]\Def \sup\lb \left|\chi_{p,d-p,\sfrac12-p-d/2}(\alpha_1,\alpha_2)\right|:d\in \{1,3\}, \, 0\le p\le d,\, |\alpha_1|\le \KK[E:hypobound]\KK[E:boundedness],\, |\alpha_2|\le \KK[E:hypobound]\rb  \end{equation}
is finite.

\begin{lemma}\label{L:Qsize}  We have that
\begin{equation}\label{E:Qsize}
\KK[E:Qsize]\Def \sup\lb \left|\QCorrector_{\SP}(t,x,y)\right|t^{-1/2}\exp\left[-\nu \xx_{\SP}^2(t,x) - \nu \yy_{\SP}^2(t,y)\right]: (\SP)\in \RR,\, (t,x,y)\in \openstrip\rb
\end{equation}
is finite.
\end{lemma}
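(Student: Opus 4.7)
The plan is to unpack the explicit formula \eqref{E:zwoundsprime} for $\QCorrector_{\SP}$ and bound each of its six terms separately. The key observation is that $\QCorrector_{\SP}$ was deliberately constructed to lie in $\Vspace^{1,\sfrac12}\oplus\Vspace^{3,\sfrac12}$, so every monomial appearing in it has ``time degree'' equal to $\sfrac12$. After extracting the $t^{1/2}$ factor, what remains is a polynomial in $\xx_{\SP}(t,x)$ and $\yy_{\SP}(t,y)$ of total spatial degree at most $3$, which is easily controlled by a Gaussian in $(\xx_{\SP},\yy_{\SP})$.

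More concretely, I would first use Assumption~\ref{A:boundedness} and the bound \eqref{E:betabounds} to check that
\begin{equation*}
\left|\bnorm_{1,0}(\SP)b_1(\SP)\right|\le \KK[E:hypobound]\KK[E:boundedness]\qquad\text{and}\qquad \left|\bnorm_{0,2}(\SP)\right|\le \KK[E:hypobound]
\end{equation*}
for all $(\SP)\in\RR$, so that \eqref{E:QCoefficientsBounds} applies uniformly to every coefficient $\chi_{p,q,r}\bigl(\bnorm_{1,0}(\SP)b_1(\SP),\bnorm_{0,2}(\SP)\bigr)$ appearing in \eqref{E:zwoundsprime}. Then I would invoke the second identity of \eqref{E:monoequiv}, which says $\Mono{p}{q}{r}(t,x,y)=\xx_{\SP}^p(t,x)\yy_{\SP}^q(t,y)t^{r+(3p+q)/2}$, and verify directly that each of the six exponent triples $(p,q,r)\in\{(0,1,0),(1,0,-1),(0,3,-1),(1,2,-2),(2,1,-3),(3,0,-4)\}$ satisfies $r+(3p+q)/2=\sfrac12$. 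This is simply a reflection of the fact that the second component of $\degree(\Mono{p}{q}{r})$ equals $r+(3p+q)/2$, and each such monomial lies in $\Vspace^{1,\sfrac12}$ or $\Vspace^{3,\sfrac12}$ by construction.

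Combining these observations,
\begin{equation*}
\left|\QCorrector_{\SP}(t,x,y)\right|\le \KK[E:QCoefficientsBounds]\,t^{1/2}\sum_{(p,q)\in S}\left|\xx_{\SP}(t,x)\right|^p\left|\yy_{\SP}(t,y)\right|^q
\end{equation*}
where $S=\{(0,1),(1,0),(0,3),(1,2),(2,1),(3,0)\}$. Dividing by $t^{1/2}$ and multiplying by the Gaussian factor $\exp\!\bigl[-\nu\xx_{\SP}^2(t,x)-\nu\yy_{\SP}^2(t,y)\bigr]$ reduces the claim to the elementary fact that
\begin{equation*}
\sup_{(X,Y)\in\RR}|X|^p|Y|^q\exp\!\bigl[-\nu X^2-\nu Y^2\bigr]<\infty
\end{equation*}
for each fixed $(p,q)\in\N^2$, which follows from calculus (or from \eqref{E:expbound}). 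Summing the six finite suprema yields the bound $\KK[E:Qsize]$.

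Frankly, I do not expect any real obstacle: this is a bookkeeping lemma whose content is entirely built into the way $\Algebra$, $\degree$, and $\QCorrector_{\SP}$ were defined. The only thing requiring care is verifying the uniformity in $(\SP)$, which rests on the compactness argument already packaged into \eqref{E:QCoefficientsBounds}, and the explicit check that each term in \eqref{E:zwoundsprime} contributes precisely a $t^{1/2}$ factor after rewriting in $(\xx_{\SP},\yy_{\SP})$ coordinates.
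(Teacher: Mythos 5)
Your proposal is correct and follows essentially the same route as the paper's proof: bound the coefficients uniformly via \eqref{E:QCoefficientsBounds} (your explicit check that $\left|\bnorm_{1,0}(\SP)b_1(\SP)\right|\le \KK[E:hypobound]\KK[E:boundedness]$ and $\left|\bnorm_{0,2}(\SP)\right|\le \KK[E:hypobound]$ is the uniformity the paper leaves implicit), factor out the common $t^{1/2}$ from the monomials in $\Vspace^{1,\sfrac12}\oplus\Vspace^{3,\sfrac12}$ via \eqref{E:monoequiv}, and absorb the remaining degree-at-most-$3$ polynomial in $\left(\xx_{\SP},\yy_{\SP}\right)$ into the Gaussian factor. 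No gaps; your degree check $r+(3p+q)/2=\sfrac12$ for all six monomials matches the paper's bound by $\Gamma$.
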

Thus
\begin{equation}\label{E:Qatzero} \lim_{\substack{(t,x,y)\to (0,x^*,y^*) \\ (t,x,y)\in \openstrip}}\QCorrector_{\SP}(t,x,y)=0\end{equation}
for all $(x^*,y^*)\in \RR$.
\begin{proof}[Proof of Lemma \ref{L:Qsize}]
We have that
\begin{equation*} \left|\QCorrector_{\SP}(t,x,y)\right|\le \KK[E:QCoefficientsBounds] t^{1/2}\Gamma\left(t,\xx_{\SP}(t,x),\yy_{\SP}(t,y)\right) \end{equation*}
where
\begin{equation*} \Gamma(X,Y)
\Def \sum_{p=0}^1 |X|^p |Y|^{1-p}+\sum_{p=0}^3 |X|^p |Y|^{3-p}.\end{equation*}
Since
\begin{equation*} \sup\lb \left|\Gamma(X,Y)\right|\exp\left[-\nu X^2-\nu  Y^2 \right]: X,Y\in \R\rb \end{equation*}
is finite, the claim follows.
\end{proof}

\begin{lemma}\label{L:ProjectedGenerator}
We have that
\begin{equation} \label{E:ProjectedGenerator} \KK[E:ProjectedGenerator]\Def \sup_{\substack{(\SP)\in \RR\\ (t,x,y)\in \openstrip}}  \left|\left(\Proj^E\tgenP_{\SP}^L\QCorrector_{\SP}\right)(t,x,y)\right|\exp\left[-\nu\xx^2_{\SP}(t,x)-\nu\yy^2_{\SP}(t,y)\right] \end{equation}
is finite.
\end{lemma}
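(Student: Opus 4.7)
The proof will mimic Lemmas \ref{L:XiLEbound}, \ref{L:XiNLbound}, and \ref{L:Qsize}: reduce $\Proj^E \tgenP_{\SP}^L \QCorrector_{\SP}$ to a finite sum of monomials $\Mono{p}{q}{r}$ whose coefficients are continuous functions of the bounded quantities $\bnorm_{i,j}(\SP)$, $\hypo(\SP)$, $b_1(\SP)$, $b_2(\SP)$, and $c(\SP)$, then dominate by a Gaussian. The whole argument is degree-bookkeeping plus a compactness/polynomial-domination estimate.

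The first step is an algebraic simplification. Start from the decomposition \eqref{E:throwaway}. The operator $\tgenP_{\SP}^{*,0,-1}+\tgenP_{\SP}^{*,-2,-1}$ maps $\Vspace^{1,\sfrac12}\oplus\Vspace^{3,\sfrac12}$ into $\Vspace^{1,-\sfrac12}\oplus\Vspace^{3,-\sfrac12}$, which lies in $\Vspace^M\setminus\Vspace^E$ (its grading has $s=-\sfrac12<0$); by \eqref{E:projectedequation} this whole piece equals $-\Proj^M\Xi^L_{\SP}$ and is therefore killed by $\Proj^E$. Next, for any $(d',s')\in D_{\genP}$ with $s'\ge -\sfrac12$, the inclusion \eqref{E:GenPActionsParts} gives
\begin{equation*}
  \tgenP_{\SP}^{*,d',s'}\QCorrector_{\SP}\in \Vspace^{1+d',\sfrac12+s'}\oplus\Vspace^{3+d',\sfrac12+s'}\subset \Vspace^E,
\end{equation*}
so $\Proj^E$ acts as the identity on these remaining summands. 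Consequently
\begin{equation*}
  \Proj^E\tgenP_{\SP}^L\QCorrector_{\SP}=\sum_{\substack{(d',s')\in D_{\genP}\\ s'\ge -\sfrac12}}\tgenP_{\SP}^{*,d',s'}\QCorrector_{\SP},
\end{equation*}
which is a finite, explicitly expandable object in $\Algebra$.

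The second step is to bound this finite sum. Expanding using \eqref{E:zwoundsprime} and the explicit form of $\tgenP_{\SP}^L$ in \eqref{E:tgenPLstuff}, each summand is a linear combination of monomials $\Mono{p}{q}{r}$ whose coefficients are polynomials in the entries of $P(\SP)$ of \eqref{E:Pvecdef} together with $\chi_{\cdot,\cdot,\cdot}(\bnorm_{1,0}(\SP)b_1(\SP),\bnorm_{0,2}(\SP))$. Since $\{P(\SP):(\SP)\in\RR\}$ lies in the compact set $R$ (as in the proof of Lemma \ref{L:XiLEbound}) and the $\chi$'s are continuous (cf.\ \eqref{E:QCoefficientsBounds}), all these coefficients are uniformly bounded by some constant $\kk$. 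Converting back to $(\xx_{\SP},\yy_{\SP},t^{1/2})$ coordinates via \eqref{E:monoequiv} yields
\begin{equation*}
  \left|\Proj^E\tgenP_{\SP}^L\QCorrector_{\SP}(t,x,y)\right|\le \kk\,\Gamma\!\left(t,\xx_{\SP}(t,x),\yy_{\SP}(t,y)\right),
\end{equation*}
where $\Gamma(t,X,Y)$ is a finite sum of terms $|X|^p|Y|^q t^s$ with $s\ge 0$ (because the summands lie in $\Vspace^E$) and $t\in(0,\TT)$. As in Lemmas \ref{L:XiLEbound}--\ref{L:Qsize}, $\sup_{X,Y\in\R,\,0<t<\TT}|\Gamma(t,X,Y)|\exp[-\nu X^2-\nu Y^2]<\infty$ because $\nu>0$ and $\Gamma$ is polynomial in $(X,Y)$.

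The only real obstacle is the bookkeeping in the first step: one must be careful that $\Proj^M$ and $\Proj^E$ are non-orthogonal projections on the graded decomposition, so the identity $\Proj^E\tgenP_{\SP}^L\QCorrector_{\SP}=\tgenP_{\SP}^L\QCorrector_{\SP}+\Proj^M\Xi^L_{\SP}$ must be justified by checking the grading of each piece in \eqref{E:throwaway}. Once this reduction is in place, the Gaussian domination is identical in spirit to the preceding lemmas and essentially mechanical.
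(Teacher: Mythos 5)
Your proposal is correct and follows essentially the same route as the paper's own proof: starting from \eqref{E:throwaway}, observing that the $\left(\tgenP_{\SP}^{*,0,-1}+\tgenP_{\SP}^{*,-2,-1}\right)\QCorrector_{\SP}$ part has grading $s=-\sfrac12$ and is removed by $\Proj^E$ while the remaining $s'\ge -\sfrac12$ terms land in $\Vspace^E$, then bounding the finitely many monomial coefficients uniformly via continuity and compactness of the parameter range, and finally dominating the resulting polynomial in $\left(\xx_{\SP},\yy_{\SP},t^{1/2}\right)$ by the Gaussian factor $\exp\left[\nu\xx^2_{\SP}+\nu\yy^2_{\SP}\right]$. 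No gaps; the only difference is cosmetic bookkeeping of which bounded quantities enter the coefficient vector.
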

\begin{proof}
The proof is similar to that of Lemma \ref{L:XiLEbound}.  Expanding upon \eqref{E:throwaway}, we have that
\begin{equation*} \Proj^E\tgenP_{\SP}^L\QCorrector_{\SP} \in 
 \textrm{Span}\lb  \Vspace^{d+d',\sfrac12+s'} : (d',s')\in D_{\genP}, s'\ge -\sfrac12,d\in \{1,3\}\rb \subset \bigoplus_{\substack{0\le d\le 6 \\ s\in \{0,\sfrac12\}}}\Vspace^{d,s}.\end{equation*}
Let's also write that
\begin{align*}\frac{\left(\Poly^{(1)}_{\SP}b_1\right)(t,x,y)-b_1(\SP)}{\hypo(\SP)}&=\bnorm_{1,0}(\SP)\left(\hypo(\SP)\Mono{1}{0}{0}(t,x,y)-b_1(\SP)\Mono{0}{0}{1}(t,x,y)\right)\\
&\qquad + \bnorm_{0,1}(\SP)\Mono{0}{1}{0}(t,x,y).\end{align*}
For each $(\SP)\in \RR$, define
\begin{equation}\label{E:Pvecdeff} P(\SP)\Def \left(\bnorm_{1,0}(\SP),\bnorm_{0,1}(\SP),\bnorm_{0,2}(\SP),\hypo(\SP),b_1(\SP)\right)\in \R^5.\end{equation}
Then we can write
\begin{align*} \Proj^E\tgenP_{\SP}^L\QCorrector_{\SP} &= \sum_{\substack{0\le d\le 6 \\ s\in \{0,\sfrac12\}\\ 0\le p\le d}}c_{p,d-p,s-p-d/2}\left(P(\SP)\right)\Mono{p}{d-p}{s-p-d/2} 
\end{align*}
where the $c_{p,q,r}$'s are continuous.  Bounding the various terms in \eqref{E:Pvecdeff}, we have that 
\begin{equation*}
\lb P(\SP): \SP\in \RR\rb \subset R\Def \left[-\KK[E:hypobound],\KK[E:hypobound]\right]^3\times \left[-\KK[E:boundedness],\KK[E:boundedness]\right]^2.\end{equation*}
Since $R$ is a compact subset of $\R^5$ and the $c_{p,q,r}$'s are continuous,
\begin{equation*}\kk\Def \sup\lb \left|c_{p,q,s-p-d/2}(\rho)\right|: 0\le d\le 6,\, s\in \{0,\sfrac12\},\, 
0\le p\le d,\, \rho\in R\rb \end{equation*}
is finite.  Thus
\begin{equation*} \left|\Proj^E\tgenP_{\SP}^L\QCorrector_{\SP}(t,x,y)\right|\le \kk \Gamma\left(t,\xx_{\SP}(t,x),\yy_{\SP}(t,y)\right) \end{equation*}
where
\begin{equation*}\Gamma(t,X,Y)
\Def \sum_{\substack{0\le d\le 6 \\ s\in \{0,\sfrac12\} \\ 0\le p\le d}}|X|^p|Y|^{d-p}t^s. \end{equation*}
Since
\begin{equation*} \sup \lb \left|\Gamma(t,X,Y)\right|\exp\left[-\nu X^2 - \nu Y^2 \right]:  X,Y\in \R,\,  0<t< \TT\rb \end{equation*}
is finite, the claim follows.

\end{proof}
\begin{lemma}\label{L:TruncateGenerator}
We have that
\begin{equation} \label{E:TruncateGenerator} \KK[E:TruncateGenerator]\Def \sup\lb \left|\left(\tgenP_{\SP}^{NL}\QCorrector_{\SP}\right)(t,x,y)\right|\exp\left[-3\nu\xx^2_{\SP}(t,x)-3\nu\yy^2_{\SP}(t,y)\right]: (\SP)\in \RR,\ (t,x,y)\in \openstrip \rb \end{equation}
is finite.
\end{lemma}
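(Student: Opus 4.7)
The plan is to expand $\tgenP_{\SP}^{NL}\QCorrector_{\SP}$ using the definition in \eqref{E:tgenPLstuff} as a sum of four pieces and bound each piece separately using the estimates we have already accumulated. Specifically, I will write
\[ \tgenP_{\SP}^{NL}\QCorrector_{\SP} = \underbrace{\tfrac{\Res^{(2)}_{\SP}b_1}{\hypo(\SP)}\VV^X_{\SP}\QCorrector_{\SP}}_{\text{(I)}} + \underbrace{b_2\tfrac{\partial \QCorrector_{\SP}}{\partial y}}_{\text{(II)}} + \underbrace{\left(\Proj^E\Xi^L_{\SP}\right)\QCorrector_{\SP}}_{\text{(III)}} + \underbrace{\Xi^{NL}_{\SP}\QCorrector_{\SP}}_{\text{(IV)}} \]
and show that each piece is bounded by a constant times $\exp[3\nu\xx^2_{\SP}+3\nu\yy^2_{\SP}]$. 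The worst exponential weight ($3\nu$) will come from (IV), where the factor $2\nu$ from Lemma \ref{L:XiNLbound} is combined with the factor $\nu$ from Lemma \ref{L:Qsize}; the choice of $3\nu$ in the statement reflects this.

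For the derivative terms (I) and (II), I would first note that since $\QCorrector_{\SP}\in \Vspace^{1,\sfrac12}\oplus \Vspace^{3,\sfrac12}$ with coefficients uniformly controlled by $\KK[E:QCoefficientsBounds]$, the differential inclusions \eqref{E:differentialinclusionsb} place $\VV^X_{\SP}\QCorrector_{\SP}\in \Vspace^{0,-1}\oplus \Vspace^{2,-1}$ and $\partial \QCorrector_{\SP}/\partial y\in \Vspace^{0,0}\oplus \Vspace^{2,0}$. Written in the $(\xx_{\SP},\yy_{\SP},t)$ coordinates via \eqref{E:monoequiv}, these are polynomials in $\xx_{\SP},\yy_{\SP}$ of total degree at most $2$ with time scaling $t^{-1}$ and $t^0$ respectively, with coefficients bounded by a fixed multiple of $\KK[E:QCoefficientsBounds]$. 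Multiplying (I) by the bound from \eqref{E:wheel1} at $d=2$ gives the factor $t$ from the Taylor remainder times $t^{-1}$ from the derivative, which cancel; the surviving polynomial in $\xx_{\SP},\yy_{\SP}$ can be absorbed into an additional $\nu\xx^2_{\SP}+\nu\yy^2_{\SP}$ in the exponential, yielding a bound with total weight $2\nu$. For (II), boundedness of $b_2$ together with the polynomial bound on $\partial \QCorrector_{\SP}/\partial y$ gives a bound with weight $\nu$ after the same polynomial absorption.

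For the multiplicative terms (III) and (IV), I would directly combine the existing lemmas. Lemma \ref{L:XiLEbound} gives $|\Proj^E\Xi^L_{\SP}|\le \KK[E:XiLEbound]\exp[\nu\xx^2_{\SP}+\nu\yy^2_{\SP}]$ and Lemma \ref{L:Qsize} gives $|\QCorrector_{\SP}|\le \KK[E:Qsize]t^{1/2}\exp[\nu\xx^2_{\SP}+\nu\yy^2_{\SP}]$, so their product is controlled with weight $2\nu$. For (IV), Lemma \ref{L:XiNLbound} contributes weight $2\nu$ and Lemma \ref{L:Qsize} contributes the additional $\nu$, producing the claimed $3\nu$ exponent. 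Taking the max of the four bounds and noting $t^{1/2}\le \TT^{1/2}$ completes the proof.

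The main bookkeeping obstacle will be correctly absorbing the polynomial prefactors (from the explicit form of $\QCorrector_{\SP}$ in \eqref{E:zwoundsprime} and its derivatives) into the exponential weight without exceeding the $3\nu$ budget; this is handled by the elementary fact that for any polynomial $P$ and any $\delta>0$ the supremum of $|P(\xx,\yy)|\exp[-\delta\xx^2-\delta\yy^2]$ over $\R^2$ is finite, exactly as in the proofs of Lemmas \ref{L:XiLEbound}, \ref{L:XiNLbound}, \ref{L:Qsize}, and \ref{L:ProjectedGenerator}. No genuinely new ideas are required beyond assembling these ingredients.
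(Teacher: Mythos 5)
Your proposal is correct and follows essentially the same route as the paper: decompose $\tgenP_{\SP}^{NL}\QCorrector_{\SP}$ according to \eqref{E:tgenPLstuff}, exploit the cancellation of the $t^{-1}$ in $\VV^X_{\SP}\QCorrector_{\SP}$ against the factor $t$ from \eqref{E:wheel1} with $d=2$, bound the remaining products via Lemmas \ref{L:XiLEbound}, \ref{L:XiNLbound}, and \ref{L:Qsize}, and absorb the leftover polynomial prefactors into the $3\nu$ Gaussian weight. The only cosmetic difference is that you infer the structure of $\VV^X_{\SP}\QCorrector_{\SP}$ and $\partial\QCorrector_{\SP}/\partial y$ from the degree map and \eqref{E:differentialinclusionsb}, whereas the paper computes them explicitly from \eqref{E:zwoundsprime}; the content and constants are the same.
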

\begin{proof}
Let's first compute that
\begin{align*} \VV^X_{\SP}\QCorrector_{\SP}&= \chi_{1,0,-1}\left(\bnorm_{1,0}(\SP)b_1(\SP),\bnorm_{0,2}(\SP)\right) \Mono{0}{0}{-1}\\
&\qquad +\sum_{p=1}^3 \chi_{p,3-p,-1-p}\left(\bnorm_{1,0}(\SP)b_1(\SP),\bnorm_{0,2}(\SP)\right) p\Mono{p-1}{3-p}{-1-p}\\
\frac{\partial}{\partial y}\QCorrector_{\SP}&=  \chi_{0,1,0}\left(\bnorm_{1,0}(\SP)b_1(\SP),\bnorm_{0,2}(\SP)\right) \Mono{0}{0}{0}\\
&\qquad +\sum_{p=0}^2 \chi_{p,3-p,-1-p}\left(\bnorm_{1,0}(\SP)b_1(\SP),\bnorm_{0,2}(\SP)\right) (3-p)\Mono{p}{2-p}{-1-p}.
\end{align*}
Then
\begin{align*}
\left|\VV^X_{\SP}\QCorrector_{\SP}(t,x,y)\right|&\le t^{-1}\KK[E:QCoefficientsBounds]\Gamma_1\left(\xx_{\SP}(t,x),\yy_{\SP}(t,y)\right)\\
\left|\frac{\partial}{\partial y}\QCorrector_{\SP}(t,x,y)\right|&\le \KK[E:QCoefficientsBounds]\Gamma_2\left(\xx_{\SP}(t,x),\yy_{\SP}(t,y)\right)
\end{align*}
where
\begin{equation*} 
\Gamma_1\left(X,Y\right)\le 1+\sum_{p=1}^3 p|X|^{p-1} |Y|^{3-p}\qquad \text{and}\qquad
\Gamma_2\left(X,Y\right)=1+\sum_{p=0}^2(3-p)|X|^p|Y|^{2-p}.
\end{equation*}
Combining Lemmas \ref{L:XiLEbound} and  \ref{L:XiNLbound}, we also have that
\begin{equation*} \left|\left(\Proj^E\Xi^L_{\SP}(t,x,y)\right)(t,x,y)+\Xi^{NL}_{\SP}(t,x,y)\right|\le \lb \KK[E:XiLEbound] + \KK[E:XiNLbound]\rb \exp\left[2\nu \xx_{\SP}^2(t,x) + 2\nu \yy_{\SP}^2(t,y)\right] \end{equation*}
and thus
\begin{align*} \left|\left(\tgenP_{\SP}^{NL} \QCorrector_{\SP}\right)(t,x,y)\right|
&\le  \KK[E:wheel1] t\exp\left[\nu\xx^2_{\SP}(t,x)+\nu\yy^2_{\SP}(t,y)\right]\\
&\qquad \qquad \times t^{-1}\KK[E:QCoefficientsBounds]\Gamma_1\left(\xx_{\SP}(t,x),\yy_{\SP}(t,y)\right)\\
&\qquad +\KK[E:boundedness]\KK[E:QCoefficientsBounds]\Gamma_2\left(\xx_{\SP}(t,x),\yy_{\SP}(t,y)\right)\\
&\qquad  +  \lb \KK[E:XiLEbound] + \KK[E:XiNLbound]\rb \KK[E:Qsize]t^{1/2}\exp\left[3\nu \xx_{\SP}^2(t,x) + 3\nu \yy_{\SP}^2(t,y)\right]\\
&=\Gamma\left(\xx_{\SP}(t,x),\yy_{\SP}(t,y)\right)
\end{align*}
where
\begin{align*} \Gamma(X,Y) &=  \KK[E:wheel1] \KK[E:QCoefficientsBounds]\Gamma_1\left(X,Y\right)\exp\left[\nu X^2+\nu Y^2\right]+\KK[E:boundedness]\KK[E:QCoefficientsBounds]\Gamma_2\left(X,Y\right)\\
&\qquad  +  \lb \KK[E:XiLEbound] + \KK[E:XiNLbound]\rb \KK[E:Qsize]\TT^{1/2}\exp\left[3\nu \xx_{\SP}^2(t,x) + 3\nu \yy_{\SP}^2(t,y)\right].
\end{align*}
Since
\begin{equation*} \sup\lb \left|\Gamma(X,Y)\right|\exp\left[-3\nu X^2-3\nu  Y^2 \right]:  X,Y\in \R,\, 0<t< \TT\rb \end{equation*}
is finite, the claim follows.\end{proof}
\noindent Collecting things together,
\begin{proposition}\label{P:smallgenerator} We have that
\begin{equation}\label{E:smallgenerator} \KK[E:smallgenerator]\Def \sup_{\substack{(\SP)\in \RR \\ (t,x,y)\in \openstrip}} \left|\left(\tgenP_{\SP}\QCorrector_{\SP}\right)(t,x,y)+\Xi_{\SP}(t,x,y)\right|\exp\left[-3\nu\xx^2_{\SP}(t,x)-3\nu\yy^2_{\SP}(t,y)\right] \end{equation}
is finite.
\end{proposition}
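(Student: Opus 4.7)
The plan is to use the algebraic identity \eqref{E:projectedequation}, namely $\Proj^M\tgenP_{\SP}^L\QCorrector_{\SP} = -\Proj^M\Xi^L_{\SP}$, as the principal cancellation, and then to combine the four preceding lemmas (Lemmas \ref{L:XiLEbound}, \ref{L:XiNLbound}, \ref{L:ProjectedGenerator}, \ref{L:TruncateGenerator}) to bound the remaining residual pieces. The whole point of constructing $\QCorrector_{\SP}$ via \eqref{E:zwounds}--\eqref{E:zwoundsprime} was to make this cancellation happen, so the proof should largely be bookkeeping.

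First I would decompose $\tgenP_{\SP}\QCorrector_{\SP}$ as $\tgenP_{\SP}^L\QCorrector_{\SP}+\tgenP_{\SP}^{NL}\QCorrector_{\SP}$ using \eqref{E:tgenPLstuff}, and decompose $\Xi_{\SP}$ as $\Xi^L_{\SP}+\Xi^{NL}_{\SP}$ via \eqref{E:XiAsSum}. Writing $1=\Proj^M+\Proj^E$ on the $\Vspace^M + \Vspace^E = \Algebra$ factors $\tgenP_{\SP}^L\QCorrector_{\SP}$ and $\Xi^L_{\SP}$, and invoking \eqref{E:projectedequation} to cancel the two $\Proj^M$ pieces against each other, I obtain the identity
\begin{equation*}
\tgenP_{\SP}\QCorrector_{\SP}+\Xi_{\SP} \;=\; \Proj^E\tgenP_{\SP}^L\QCorrector_{\SP} \;+\; \tgenP_{\SP}^{NL}\QCorrector_{\SP} \;+\; \Proj^E\Xi^L_{\SP} \;+\; \Xi^{NL}_{\SP}.
\end{equation*}

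Then I would apply the triangle inequality and bound each of the four summands using its respective lemma. Lemmas \ref{L:XiLEbound} and \ref{L:ProjectedGenerator} give bounds proportional to $\exp[\nu\xx_{\SP}^2+\nu\yy_{\SP}^2]$, Lemma \ref{L:XiNLbound} gives a bound proportional to $\exp[2\nu\xx_{\SP}^2+2\nu\yy_{\SP}^2]$, and Lemma \ref{L:TruncateGenerator} gives a bound proportional to $\exp[3\nu\xx_{\SP}^2+3\nu\yy_{\SP}^2]$. Since the largest exponential growth is the last, multiplying the resulting sum by $\exp[-3\nu\xx_{\SP}^2-3\nu\yy_{\SP}^2]$ yields a bound uniform in $(\SP)\in \RR$ and $(t,x,y)\in \openstrip$, specifically $\KK[E:ProjectedGenerator]+\KK[E:TruncateGenerator]+\KK[E:XiLEbound]+\KK[E:XiNLbound]$ times a constant absorbing the differences between $e^\nu$, $e^{2\nu}$, and $e^{3\nu}$ prefactors (which are all $\le 1$ after multiplication by $e^{-3\nu}$). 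This gives the desired finite supremum and hence $\KK[E:smallgenerator]<\infty$.

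There is no real obstacle here, since the heavy lifting was done in assembling the polynomial space $\Vspace^M$ and solving the $6\times 6$ linear system \eqref{E:bigblock}--\eqref{E:zwounds} so that \eqref{E:projectedequation} holds exactly; once those are in place, Proposition \ref{P:smallgenerator} is just the assembly step. The only item requiring any care is being precise about which of the four error growth rates dominates, so that the choice of the exponential weight $3\nu$ in \eqref{E:smallgenerator} is (barely) large enough to absorb all four terms simultaneously; this justifies the calibration $\nu=\sfrac{1}{36}$ made in \eqref{E:nuselectwhy}, which ensures that $3\nu$ combined with the Gaussian decay of $\kernel_{\SP}$ from Lemma \ref{L:KandhatK} still leaves enough headroom ($\sfrac{1}{6}-3\nu=\sfrac{1}{12}$) for the downstream Volterra estimates.
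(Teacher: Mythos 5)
Your proposal is correct and follows essentially the same route as the paper's proof: decompose $\tgenP_{\SP}\QCorrector_{\SP}+\Xi_{\SP}$ into linear and nonlinear parts, split the linear parts with $\Proj^M+\Proj^E$, cancel the main pieces via \eqref{E:projectedequation}, and bound the four residual terms with Lemmas \ref{L:XiLEbound}, \ref{L:XiNLbound}, \ref{L:ProjectedGenerator}, and \ref{L:TruncateGenerator}, with the $3\nu$ weight dictated by the worst growth in Lemma \ref{L:TruncateGenerator}. No changes needed.
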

\begin{proof}

We have
\begin{align*} \tgenP_{\SP}\QCorrector_{\SP}+\Xi_{\SP}
&=\tgenP_{\SP}^L\QCorrector_{\SP}+\tgenP_{\SP}^{NL}\QCorrector_{\SP} + \Xi^L_{\SP}+\Xi^{NL}_{\SP} \\
&=\Proj^M\tgenP_{\SP}^L\QCorrector_{\SP}+\Proj^E\tgenP_{\SP}^L\QCorrector_{\SP}+\tgenP_{\SP}^{NL}\QCorrector_{\SP} \\
&\qquad + \Proj^M\Xi^L_{\SP}+\Proj^E\Xi^L_{\SP}+\Xi^{NL}_{\SP}
\end{align*}
on $\openstrip$.  Combine \eqref{E:projectedequation}, Lemmas \ref{L:XiLEbound}, \ref{L:XiNLbound}, \ref{L:ProjectedGenerator}, and \ref{L:TruncateGenerator}.  The claim follows.
\end{proof}

Similarly to \eqref{E:sidekernelDef}, let's construct a boundary potential out of $\kernelQ$ of \eqref{E:kernelQDef}; define
\begin{equation}\label{E:siderkernelQDef} \sidekernelQ_{y_\circ}(t,x,y)\Def |b_1(0,y_\circ)|\kernelQ_{0,y_\circ}(t,x,y)
= |b_1(0,y_\circ)|\kernel_{0,y_\circ}(t,x,y)\lb 1+\QCorrector_{\SP}\rb \end{equation}
for $y_\circ\in \R$ and $(t,x,y)\in \domplus$.

\begin{proposition}\label{P:CorrectorWhy}  There is a $\KK[E:CorrectorWhy]>0$ such that
\begin{equation}\label{E:CorrectorWhy} \begin{aligned}\left|\kernelQ_{\SP}(t,x,y)\right|&\le \KK[E:CorrectorWhy] \kernelbound{\sfrac{1}{12}}_{\SP}(t,x,y)\qquad (\SP)\in \RR,\, (t,x,y)\in \openstrip \\
\left|\genP\kernelQ_{\SP}(t,x,y)\right|&\le \KK[E:CorrectorWhy] \kernelbound{\sfrac{1}{12}}_{\SP}(t,x,y)\qquad (\SP)\in \RR,\, (t,x,y)\in \openstrip\\
\left|\sidekernelQ_{y_\circ}(t,x,y)\right|&\le \KK[E:CorrectorWhy] \kernelbound{\sfrac{1}{12}}_{0,y_\circ}(t,x,y)\qquad y_\circ \in \R,\, (t,x,y)\in \domplus\\
\left|\left(\genP\sidekernelQ_{y_\circ}\right)(t,x,y)\right|&\le \KK[E:CorrectorWhy] \kernelbound{\sfrac{1}{12}}_{0,y_\circ}(t,x,y).\qquad y_\circ\in \R,\, (t,x,y)\in \domint\end{aligned}\end{equation}
\end{proposition}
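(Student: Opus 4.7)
The plan is to derive all four bounds by combining the preceding analytic estimates on $\kernel$, $\QCorrector$, and $\tgenP\QCorrector + \Xi$, using the central identity \eqref{E:ErrorPDE} for the second bound and the arithmetic identity \eqref{E:nuselectwhy} to trade Gaussian growth for Gaussian decay. The key point is that $\nu = \sfrac{1}{36}$ was chosen precisely so that $\sfrac{1}{6} - 3\nu = \sfrac{1}{12}$: starting from the $\alpha = \sfrac{1}{6}$ Gaussian bound on $\kernel$ from Lemma \ref{L:KandhatK}, one can absorb up to $3\nu(\xx_{\SP}^2 + \yy_{\SP}^2)$ of Gaussian growth and still land in $\kernelbound{\sfrac{1}{12}}$.

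For the first inequality I would write $\kernelQ_{\SP} = \kernel_{\SP}(1 + \QCorrector_{\SP})$, bound $|\kernel_{\SP}|$ via Lemma \ref{L:KandhatK} and $|\QCorrector_{\SP}|$ via Lemma \ref{L:Qsize}, and observe that
\begin{equation*}
\kernelbound{\sfrac{1}{6}}_{\SP}(t,x,y)\exp\!\left[\nu \xx_{\SP}^2(t,x) + \nu \yy_{\SP}^2(t,y)\right] = \kernelbound{\sfrac{5}{36}}_{\SP}(t,x,y) \le \kernelbound{\sfrac{1}{12}}_{\SP}(t,x,y)
\end{equation*}
by monotonicity of $\alpha \mapsto \kernelbound{\alpha}$ together with $\sfrac{1}{12} < \sfrac{5}{36}$. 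Since $t^{1/2} \le \TT^{1/2}$, this yields the claim with a constant $\KK[E:KandhatKConst](1 + \KK[E:Qsize]\TT^{1/2})$.

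For the second inequality I would invoke \eqref{E:ErrorPDE}, which expresses $\genP\kernelQ_{\SP}$ as $\kernel_{\SP}(\tgenP_{\SP}\QCorrector_{\SP} + \Xi_{\SP})$, apply Lemma \ref{L:KandhatK} to the first factor and Proposition \ref{P:smallgenerator} to the second, and use
\begin{equation*}
\kernelbound{\sfrac{1}{6}}_{\SP}(t,x,y)\exp\!\left[3\nu \xx_{\SP}^2(t,x) + 3\nu \yy_{\SP}^2(t,y)\right] = \kernelbound{\sfrac{1}{12}}_{\SP}(t,x,y)
\end{equation*}
which is exactly the calibration \eqref{E:nuselectwhy} built into the definition of $\nu$. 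This is the step where the choice of $\nu$ pays off; all the earlier Lemmas (\ref{L:XiLEbound}, \ref{L:XiNLbound}, \ref{L:ProjectedGenerator}, \ref{L:TruncateGenerator}) were arranged so that the total Gaussian growth coming out of $\tgenP_{\SP}\QCorrector_{\SP} + \Xi_{\SP}$ is at most $3\nu$ in each coordinate.

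The last two inequalities, concerning $\sidekernelQ_{y_\circ}$ and $\genP\sidekernelQ_{y_\circ}$, follow immediately from the first two by specializing $(\SP) = (0,y_\circ)$ and noting that $\sidekernelQ_{y_\circ} = |b_1(0,y_\circ)|\,\kernelQ_{0,y_\circ}$, where the scalar $|b_1(0,y_\circ)|$ is independent of $(t,x,y)$ and therefore commutes with $\genP$. Since $|b_1(0,y_\circ)| \le \KK[E:boundedness]$ by Assumption \ref{A:boundedness}, the constant simply absorbs this factor. No real obstacle is anticipated: all the hard work of sharpening constants, picking bases, and controlling cross terms has been done in Sections \ref{S:Kernel} and the preceding part of Section \ref{S:Corrections}, and what remains is essentially a bookkeeping consolidation of those estimates.
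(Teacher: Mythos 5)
Your proposal is correct and follows essentially the same route as the paper's proof: factor $\kernelQ_{\SP}=\kernel_{\SP}(1+\QCorrector_{\SP})$ and use Lemma \ref{L:KandhatK} with Lemma \ref{L:Qsize}, use the identity \eqref{E:ErrorPDE} together with Proposition \ref{P:smallgenerator} and the calibration \eqref{E:nuselectwhy} for $\genP\kernelQ$, and obtain the side-kernel bounds by scaling with $|b_1(0,y_\circ)|\le \KK[E:boundedness]$. The only cosmetic difference is that you phrase the absorption of the $\nu$-growth as $\kernelbound{\sfrac{5}{36}}\le\kernelbound{\sfrac{1}{12}}$ via monotonicity, while the paper cites \eqref{E:kernelmonotonicity} and bounds $t^{1/2}\le\TT^{1/2}$ directly; the substance is identical.
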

\begin{proof}
For $(t,x,y)\in \openstrip$,
\begin{align*} \left|\kernelQ_{\SP}(t,x,y)\right| &\le 
\left|\kernel_{\SP}(t,x,y)\right| \lb 1+ \left|\QCorrector_{\SP}(t,x,y)\right|\rb \\
&\le \KK[E:KandhatKConst]\kernelbound{\sfrac{1}{6}}_{\SP}(t,x,y)\lb 1+\KK[E:Qsize]t^{\sfrac12}\exp\left[\nu \xx^2_{\SP}(t,x)+\nu \yy^2_{\SP}(t,y)\right]\rb \\
&\le \KK[E:KandhatKConst]\kernelbound{\sfrac{1}{12}}_{\SP}(t,x,y)\lb 1+\KK[E:Qsize]\TT^{\sfrac12}\rb
\end{align*}
where we have used \eqref{E:kernelmonotonicity}, \eqref{E:nuselectwhy}, and Lemma \ref{L:Qsize}.
This gives the claimed bound on $\kernelQ_{\SP}$.  The bound on $\sidekernelQ$ follows from the formula \eqref{E:siderkernelQDef}.

Using \eqref{E:ErrorPDE}, \eqref{E:nuselectwhy}, and Proposition \ref{P:smallgenerator}, we next compute that
\begin{align*} \left|\left(\genP \kernelQ_{\SP}\right)(t,x,y)\right|
&= \left|\kernel_{\SP}(t,x,y)\right|\left| \tgenP_{\SP} \QCorrector_{\SP}(t,x,y)+\Xi_{\SP}(t,x,y)\right|\\
&\le \KK[E:KandhatKConst]\kernelbound{\sfrac{1}{6}}_{\SP}(t,x,y)\KK[E:smallgenerator]\exp\left[3\nu \xx^2_{\SP}(t,x)+3\nu \yy^2_{\SP}(t,y)\right]\\
&\le \KK[E:KandhatKConst]\KK[E:smallgenerator]\kernelbound{\sfrac{1}{12}}_{\SP}(t,x,y)
\end{align*}
for $(t,x,y)\in \openstrip$, 
 implying the claimed bound on $\genP\kernelQ_{\SP}$.  Since
\begin{equation*} \left(\genP \sidekernelQ_{y_\circ}\right)(t,x,y)=\left|b_1(0,y_\circ)\right|\left(\genP \kernelQ_{0,y_\circ}\right)(t,x,y)\end{equation*}
for $(t,x,y)\in \domint$, the claimed bound on $\genP \sidekernelQ_{y_\circ}$ also holds.
\end{proof}

\subsection{Integrability and Continuity}
Let's collect our various bounds together
\begin{theorem}\label{T:collected}  We have that
\begin{align}
\label{E:AAA} \KK[E:AAA]&\Def 
 \sup_{(t,x,y)\in \domplus}\int_{(\SP)\in \RR}\left|\kernelQ_{\SP}(t,x,y)\right|dx_\circ\, dy_\circ \\
 \label{E:CCC} \KK[E:CCC]&\Def 
 \sup_{(t,x,y)\in \domplus}\int_{s=0}^t \int_{y_\circ\in \R}\left|\sidekernelQ_{\SP}(s,x,y)\right| ds\, dy_\circ \\
 \label{E:BBB} \KK[E:BBB]&\Def  \sup_{(t,x,y)\in \domint}\int_{(\SP)\in \RR}\left|\genP \kernelQ_{\SP}(t,x,y)\right|dx_\circ\, dy_\circ \\
\label{E:DDD} \KK[E:DDD]&\Def  \sup_{(t,x,y)\in \domint}\int_{s=0}^t \int_{y_\circ\in \R}\left|\genP \sidekernelQ_{y_\circ}(s,x,y)\right|ds\, dy_\circ
\end{align}
are finite.  We also have that
\begin{equation} \label{E:EEE} \KK[E:EEE]\Def \sup_{(t,0,y)\in \domplus}\int_{y_\circ\in \R}\left|\sidekernelQ_{y_\circ}(t,0,y)\right|dy_\circ \end{equation}
is finite.
\end{theorem}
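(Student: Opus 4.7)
The plan is to reduce each of the five bounds to the already-established estimates. By Proposition \ref{P:CorrectorWhy}, each of the integrands $|\kernelQ_{\SP}|$, $|\genP\kernelQ_{\SP}|$, $|\sidekernelQ_{y_\circ}|$, $|\genP\sidekernelQ_{y_\circ}|$ is dominated by a constant multiple of the master Gaussian bound $\kernelbound{\sfrac{1}{12}}$. The remaining work is to show that this Gaussian bound has finite integral in each of the three relevant integration regimes, which is accomplished by chaining together the integrability machinery of Sections \ref{S:Parametrix} and \ref{S:Sides}.

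For \eqref{E:AAA} and \eqref{E:BBB}, the integration is over all of $(\SP)\in\RR$ for fixed $(t,x,y)\in\domplus$ (resp.\ $\domint$). First apply Proposition \ref{P:CorrectorWhy} to bound the integrand by $\KK[E:CorrectorWhy]\kernelbound{\sfrac{1}{12}}_{\SP}(t,x,y)$, then apply Proposition \ref{P:integrablekernel} to replace $\kernelbound{\sfrac{1}{12}}$ by $\KK[E:integrablekernel]\kernelboundintegrable$, and finally apply Proposition \ref{P:integrablekernelbound} to conclude that the $(x_\circ,y_\circ)$-integral is bounded by $\KK[E:integrablekernelbound]$, uniformly in $(t,x,y)$. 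Similarly, for \eqref{E:CCC} and \eqref{E:DDD}, where the integration is over $s\in(0,t)$ and $y_\circ\in\R$ at the side-boundary base $x_\circ=0$, chain Proposition \ref{P:CorrectorWhy} with Proposition \ref{P:sideintegrablekernel} (which bounds $\kernelbound{\sfrac{1}{12}}_{0,y_\circ}$ by $\KK[E:sideintegrablekernel]\sidekernelboundintegrable_{y_\circ}$) and Proposition \ref{P:integrablesidekernelbound} to obtain uniform finiteness.

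The bound \eqref{E:EEE} requires a separate short argument because we integrate only in $y_\circ$ (not in $s$), with $x=0$. Here I would substitute $x=0$ directly into the definition \eqref{E:sidekernelboundintegrableDef} of $\sidekernelboundintegrable$. Since $t>0=\sfrac{2x}{\ub}$ when $x=0$, only the second indicator in the first bracket survives, and the whole expression collapses to
\begin{equation*}
\sidekernelboundintegrable_{y_\circ}(t,0,y)=\frac{1}{\sqrt{t}}\exp\left[-\frac{(y-y_\circ)^2}{12t}\right],
\end{equation*}
which is a pure Gaussian in $y_\circ$ whose $y_\circ$-integral equals $\sqrt{12\pi}$ independently of $t$ and $y$. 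Combining this with Propositions \ref{P:CorrectorWhy} and \ref{P:sideintegrablekernel} yields finiteness of $\KK[E:EEE]$.

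No step presents a substantive obstacle; the theorem is essentially a bookkeeping assembly of the four integrability estimates already proved, together with the degenerate-case check $x=0$ for \eqref{E:EEE}. The only place where minor care is needed is verifying that the generator bounds in Proposition \ref{P:CorrectorWhy} are stated on $\domint$ (rather than $\openstrip$), which matches the integration domains in \eqref{E:BBB} and \eqref{E:DDD}; the pointwise kernel bounds, by contrast, hold on the larger set $\openstrip$ (resp.\ $\domplus$), comfortably covering the domains in \eqref{E:AAA}, \eqref{E:CCC}, and \eqref{E:EEE}.
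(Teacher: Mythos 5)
Your proposal is correct and follows essentially the same route as the paper: bound each integrand via Proposition \ref{P:CorrectorWhy} by $\KK[E:CorrectorWhy]\kernelbound{\sfrac{1}{12}}$, then invoke Propositions \ref{P:integrablekernel} and \ref{P:integrablekernelbound} for \eqref{E:AAA}, \eqref{E:BBB} and Propositions \ref{P:sideintegrablekernel} and \ref{P:integrablesidekernelbound} for \eqref{E:CCC}, \eqref{E:DDD}. Your treatment of \eqref{E:EEE}, setting $x=0$ in \eqref{E:sidekernelboundintegrableDef} so that only the Gaussian factor $t^{-1/2}\exp\left[-\tfrac{(y-y_\circ)^2}{12t}\right]$ survives and integrating it in $y_\circ$, is exactly the paper's closing computation.
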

\begin{proof}
Combine Propositions \ref{P:integrablekernel}, \ref{P:integrablekernelbound}, \ref{P:sideintegrablekernel},  \ref{P:integrablesidekernelbound}, and \ref{P:CorrectorWhy} to get the first four bounds.  To get the last bound, we have
\begin{equation*} \left|\sidekernelQ_{y_\circ}(t,0,y)\right|
\le \KK[E:CorrectorWhy] \kernelbound{\sfrac{1}{12}}_{0,y_\circ}(t,0,y)
\le \KK[E:CorrectorWhy]\KK[E:sideintegrablekernel]\sidekernelboundintegrable_{y_\circ}(t,0,y)\le \KK[E:CorrectorWhy]\KK[E:sideintegrablekernel]\frac{1}{\sqrt{t}}\exp\left[-\frac{(y-y_\circ)^2}{12t}\right];\end{equation*}
thus
\begin{equation*} \int_{y_\circ\in \R}\left|\kernelQ_{y_\circ}(t,0,y)\right|dy_\circ\le \KK[E:CorrectorWhy]\KK[E:sideintegrablekernel]\int_{y_\circ\in \R}\frac{1}{\sqrt{t}}\exp\left[-\frac{(y-y_\circ)^2}{12t}\right]dy_\circ = \sqrt{12\pi}\KK[E:CorrectorWhy]\KK[E:sideintegrablekernel] \end{equation*}
implying the last claim.
\end{proof}

We also have extensions of Proposition \ref{P:dirac} and
\ref{P:jumpboundary}.  Recall here \eqref{E:Qatzero}.
\begin{proposition}\label{P:diracQ}
Fix $g\in C_b([0,\TT)\times \Rplusint\times \R)$.  For any $(x^*,y^*)\in \bdy_i$, we have that
\begin{equation*}  \lim_{\substack{(t,x,y)\to (0,x^*,y^*) \\ (t,x,y)\in \domint}}\int_{(\SP)\in \bdy_i}\kernelQ_{\SP}(t,x,y)g(t,\SP)dx_\circ\, dy_\circ = g(0,x^*,y^*). \end{equation*}
\end{proposition}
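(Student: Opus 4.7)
The plan is to reduce the statement to Proposition \ref{P:dirac} via the factorization $\kernelQ_{\SP}=\kernel_{\SP}(1+\QCorrector_{\SP})$ from \eqref{E:kernelQDef} and then show that the $\QCorrector$-correction contributes nothing in the limit. Concretely, I would write
\begin{equation*}
\int_{(\SP)\in \bdy_i}\kernelQ_{\SP}(t,x,y)g(t,\SP)\,dx_\circ\,dy_\circ=A(t,x,y)+B(t,x,y),
\end{equation*}
where
\begin{equation*}
A(t,x,y)\Def \int_{(\SP)\in \bdy_i}\kernel_{\SP}(t,x,y)g(t,\SP)\,dx_\circ\,dy_\circ,\quad B(t,x,y)\Def \int_{(\SP)\in \bdy_i}\kernel_{\SP}(t,x,y)\QCorrector_{\SP}(t,x,y)g(t,\SP)\,dx_\circ\,dy_\circ.
\end{equation*}

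Since $g\in C_b([0,\TT)\times \Rplusint\times \R)$, Proposition \ref{P:dirac} immediately yields
\begin{equation*}
\lim_{\substack{(t,x,y)\to (0,x^*,y^*)\\ (t,x,y)\in \domint}}A(t,x,y)=g(0,x^*,y^*),
\end{equation*}
so the whole task reduces to showing that $B(t,x,y)\to 0$ in the same limit.

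For $B$, I would combine Lemma \ref{L:Qsize} and Lemma \ref{L:KandhatK} with the monotonicity relation \eqref{E:kernelmonotonicity}. Lemma \ref{L:Qsize} gives
\begin{equation*}
\left|\QCorrector_{\SP}(t,x,y)\right|\le \KK[E:Qsize]\,t^{1/2}\exp\left[\nu\xx^2_{\SP}(t,x)+\nu\yy^2_{\SP}(t,y)\right],
\end{equation*}
and Lemma \ref{L:KandhatK} gives $|\kernel_{\SP}|\le \KK[E:KandhatKConst]\kernelbound{\sfrac16}_{\SP}$. Multiplying these bounds, the exponential factor is absorbed using $\sfrac16-\nu=\sfrac{5}{36}\ge \sfrac{1}{12}$ together with the pointwise monotonicity $\kernelbound{\sfrac{5}{36}}_{\SP}\le \kernelbound{\sfrac{1}{12}}_{\SP}$, yielding the uniform pointwise bound
\begin{equation*}
\left|\kernel_{\SP}(t,x,y)\QCorrector_{\SP}(t,x,y)\right|\le \KK[E:KandhatKConst]\KK[E:Qsize]\,t^{1/2}\,\kernelbound{\sfrac{1}{12}}_{\SP}(t,x,y).
\end{equation*}

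Integrating against $dx_\circ\,dy_\circ$ and applying Propositions \ref{P:integrablekernel} and \ref{P:integrablekernelbound}, the $(\SP)$-integral of $\kernelbound{\sfrac{1}{12}}_{\SP}$ is bounded uniformly in $(t,x,y)\in \domplus$ by $\KK[E:integrablekernel]\KK[E:integrablekernelbound]$, so
\begin{equation*}
\left|B(t,x,y)\right|\le \|g\|\,\KK[E:KandhatKConst]\KK[E:Qsize]\KK[E:integrablekernel]\KK[E:integrablekernelbound]\,t^{1/2},
\end{equation*}
which tends to $0$ as $t\searrow 0$. There is no real obstacle here: the proof is essentially a quantitative use of the fact that $\QCorrector$ vanishes at order $t^{1/2}$ relative to the Gaussian scale (cf.\ \eqref{E:Qatzero}), and the Gaussian weight is wide enough (parameter $\sfrac16$) to absorb the modest growth factor $\exp[\nu(\xx^2+\yy^2)]$ without spoiling integrability. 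Combining the limits of $A$ and $B$ gives the claim.
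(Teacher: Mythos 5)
Your proposal is correct and follows essentially the same route as the paper: split off the $\QCorrector$-term, bound $\left|\kernel_{\SP}\QCorrector_{\SP}\right|$ by $\KK[E:KandhatKConst]\KK[E:Qsize]\,t^{1/2}\,\kernelbound{\sfrac{1}{12}}_{\SP}$ via Lemmas \ref{L:KandhatK} and \ref{L:Qsize} together with \eqref{E:kernelmonotonicity}, integrate using Propositions \ref{P:integrablekernel} and \ref{P:integrablekernelbound} to get a bound of order $t^{1/2}$, and invoke Proposition \ref{P:dirac} for the main term. The only cosmetic difference is that you justify absorbing the exponential factor by $\sfrac16-\nu\ge\sfrac1{12}$, while the paper cites the (stronger) relation \eqref{E:nuselectwhy}; both are valid.
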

\begin{proof}
Let's write
\begin{align*} \left|\kernel_{\SP}(t,x,y)\QCorrector_{\SP}(t,x,y)\right|&\le \KK[E:KandhatKConst] \kernelbound{\sfrac{1}{6}}_{\SP}(t,x,y)\KK[E:Qsize]t^{1/2}\exp\left[\nu \xx^2_{\SP}(t,x)+\nu \yy^2_{\SP}(t,y)\right]\\
&\le \KK[E:KandhatKConst] \KK[E:Qsize]\kernelbound{\sfrac{1}{12}}_{\SP}(t,x,y)t^{\sfrac12}
\le \KK[E:KandhatKConst] \KK[E:Qsize]\KK[E:integrablekernel]\kernelboundintegrable_{\SP}(t,x,y)t^{\sfrac12}
\end{align*}
where we have used \eqref{E:kernelmonotonicity} and \eqref{E:nuselectwhy}.  Thus
\begin{align*}
&\varlimsup_{\substack{(t,x,y)\to (0,x^*,y^*) \\ (t,x,y)\in \domint}}\left|\int_{(\SP)\in \RR}\kernel_{\SP}(t,x,y)\QCorrector_{\SP}(t,x,y)g(t,\SP)dx_\circ\, dy_\circ\right|\\
&\qquad \le \varlimsup_{\substack{(t,x,y)\to (0,x^*,y^*) \\ (t,x,y)\in \domint}}\KK[E:KandhatKConst] \KK[E:integrablekernel]\KK[E:Qsize]\|g\| t^{\sfrac12}
\int_{(\SP)\in \RR}\kernelboundintegrable_{\SP}(t,x,y)dx_\circ\, dy_\circ \\
&\qquad \le \varlimsup_{\substack{(t,x,y)\to (0,x^*,y^*) \\ (t,x,y)\in \domint}}\KK[E:KandhatKConst] \KK[E:integrablekernel]\KK[E:Qsize]\|g\|\KK[E:integrablekernelbound] t^{\sfrac12}=0.
\end{align*}

Combining this with Proposition \ref{P:dirac}, we get the claim.
\end{proof}
\begin{proposition}\label{P:jumpboundaryQ}
Fix $g\in C_b([0,\TT)\times \R)$.  For any $(t^*,y^*)\in \bdy_s$, we have that
\begin{equation*}  \lim_{\substack{(t,x,y)\to (t^*,0,y^*)\\ (t,x,y)\in \dom^\circ}}\int_{s=0}^t \int_{y_\circ\in \R}\sidekernelQ_{y_\circ}(s,x,y)g(s,y_\circ)ds\, dy_\circ = g(0,y^*) +\int_{s=0}^{t^*} \int_{y_\circ\in \R}\sidekernelQ_{y_\circ}(s,0,y^*)g(s,y_\circ)ds\, dy_\circ. \end{equation*}
\end{proposition}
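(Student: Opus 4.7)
The plan is to mirror the strategy of Proposition \ref{P:diracQ}, splitting $\sidekernelQ = \sidekernel + \sidekernel\cdot \QCorrector_{0,\cdot}$ and treating the two pieces separately. For the $\sidekernel$ piece, I apply Proposition \ref{P:jumpboundary} directly with the test function $\tilde g(s,x,y,y_\circ)\Def g(s,y_\circ)$, which lies in $C_b([0,\TT)\times \Rplus\times \R\times \R)$ since $g\in C_b([0,\TT)\times \R)$.  This produces exactly $g(0,y^*)+\int_0^{t^*}\int_{y_\circ\in\R} \sidekernel_{y_\circ}(s,0,y^*)g(s,y_\circ)\,ds\,dy_\circ$, which is the $\sidekernel$-portion of the claimed right-hand side.

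For the corrector piece one cannot invoke Proposition \ref{P:jumpboundary} directly, because $\QCorrector_{0,y_\circ}(s,x,y)$ is not uniformly bounded (Lemma \ref{L:Qsize} only controls it by $s^{1/2}$ times an exponential in $\xx_{\SP}$ and $\yy_{\SP}$).  Instead I would rerun the proof of Proposition \ref{P:jumpboundary} verbatim on the integrand $\sidekernel_{y_\circ}\QCorrector_{0,y_\circ}g$: split the $s$-integral using $\bOne_{\{s\le x/(2\KK[E:boundedness])\}}+\bOne_{\{s\ge 2x/\ub\}}$ (region 1) versus $\bOne_{\{x/(2\KK[E:boundedness])<s<2x/\ub\}}$ (region 2), and apply Remarks \ref{R:transformationthree} and \ref{R:transformationfour} respectively.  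Combining Lemma \ref{L:KandhatK}, Lemma \ref{L:Qsize}, \eqref{E:nuselectwhy}, \eqref{E:kernelmonotonicity}, and Proposition \ref{P:sideintegrablekernel} gives the pointwise bound
\begin{equation*}
\left|\sidekernel_{y_\circ}(s,x,y)\QCorrector_{0,y_\circ}(s,x,y)\right|\le \KK[E:KandhatKConst]\KK[E:Qsize]\KK[E:sideintegrablekernel]\sqrt{\TT}\,\sidekernelboundintegrable_{y_\circ}(s,x,y),
\end{equation*}
so that after the changes of variable one recovers the integrable envelopes of \eqref{E:integrabilitythree} and \eqref{E:integrabilityfour}, furnishing dominating functions for both regions.

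In region 1 the transformed integrand converges pointwise to
\begin{equation*}
r^{1/2}\sidekernel_{y^*-r^{1/2}v_\circ}(r,0,y^*)\QCorrector_{0,y^*-r^{1/2}v_\circ}(r,0,y^*)g(r,y^*-r^{1/2}v_\circ),
\end{equation*}
and dominated convergence recovers, after undoing the change of variable, the integral $\int_0^{t^*}\int_{y_\circ\in\R}\sidekernel_{y_\circ}(s,0,y^*)\QCorrector_{0,y_\circ}(s,0,y^*)g(s,y_\circ)\,ds\,dy_\circ$.  Summing with the $\sidekernel$ contribution already assembled and using $\sidekernel(1+\QCorrector_{0,\cdot})=\sidekernelQ$ yields the claimed right-hand side.

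The main obstacle, and the whole point of the argument, is region 2, where the $\sidekernel$-term of Proposition \ref{P:jumpboundary} produced the boundary Dirac mass $g(0,y^*)$.  Here $\zeta_2(x,y,r,v_\circ)\to 0$ as $x\to 0$, while the transformed arguments $\XX$ and $\YY$ converge to the finite limits $r|b_1(0,y^*)|^{3/2}$ and $v_\circ|b_1(0,y^*)|^{1/2}$ already computed in the proof of Proposition \ref{P:jumpboundary}.  Feeding these into Lemma \ref{L:Qsize} shows that $\QCorrector_{0,y-x^{1/2}v_\circ}(\zeta_2(x,y,r,v_\circ),x,y)=O(\sqrt{\zeta_2})\to 0$, consistent with \eqref{E:Qatzero}, so dominated convergence forces the region-2 contribution of the corrector piece to vanish.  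Thus the corrector produces no additional Dirac atom at $(0,y^*)$, and adding both pieces delivers exactly $g(0,y^*)+\int_0^{t^*}\int_{y_\circ\in\R} \sidekernelQ_{y_\circ}(s,0,y^*)g(s,y_\circ)\,ds\,dy_\circ$.
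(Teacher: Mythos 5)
Your proposal is correct, but it handles the corrector term by a different device than the paper. You split $\sidekernelQ=\sidekernel+\sidekernel\,\QCorrector_{0,\cdot}$ and dispose of the $\sidekernel$ piece via Proposition \ref{P:jumpboundary}, exactly as the paper does in \eqref{E:basicint}. For the piece $\sidekernel_{y_\circ}\QCorrector_{0,y_\circ}g$, the paper does \emph{not} rerun the change-of-variables argument: it truncates, writing $c_L(\QCorrector)$ for the cutoff at level $L$, applies Proposition \ref{P:jumpboundary} to the bounded test function $c_L(\QCorrector_{0,y_\circ})g$ (the Dirac contribution then vanishes because of \eqref{E:Qatzero}), and controls the truncation error through the elementary bound $|z-c_L(z)|\le 2|z|^2/L$ together with Lemma \ref{L:Qsize}, Proposition \ref{P:sideintegrablekernel} and Proposition \ref{P:integrablesidekernelbound}, obtaining an error of order $1/L$ that is sent to zero. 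You instead redo the proof of Proposition \ref{P:jumpboundary} directly on the unbounded integrand, using the bound $\left|\sidekernel_{y_\circ}\QCorrector_{0,y_\circ}\right|\le \KK[E:KandhatKConst]\KK[E:Qsize]\KK[E:sideintegrablekernel]\sqrt{\TT}\,\sidekernelboundintegrable_{y_\circ}$ (which indeed follows from Lemma \ref{L:KandhatK}, Lemma \ref{L:Qsize}, \eqref{E:kernelmonotonicity} and Proposition \ref{P:sideintegrablekernel}, since $\sfrac16-\nu\ge\sfrac1{12}$) to dominate both regions, and observing that in region 2 the factor $\zeta_2^{1/2}\to 0$ while $\XX,\YY$ stay finite, so the would-be Dirac atom of the corrector piece is annihilated pointwise; this is the same mechanism the paper extracts from \eqref{E:Qatzero}, just applied inside the dominated-convergence step rather than through the truncated black-box use of Proposition \ref{P:jumpboundary}. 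What your route buys is that you never need to feed a test function depending on $(s,x,y)$ into Proposition \ref{P:jumpboundary}, so you avoid checking that $c_L(\QCorrector_{0,y_\circ}(s,x,y))g(s,y_\circ)$ qualifies as an admissible $C_b$ datum (a point the paper passes over lightly, since $\QCorrector_{0,y_\circ}(s,x,y)$ is singular as $s\searrow 0$ for fixed $x>0$); the price is that you must repeat the transformation and limit computations of Proposition \ref{P:jumpboundary} for the product kernel, whereas the paper's truncation argument reuses that proposition verbatim and confines the new work to a one-line quadratic estimate.
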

\begin{proof}
Proposition \ref{P:jumpboundary} directly implies that
\begin{equation} \label{E:basicint} \lim_{\substack{(t,x,y)\to (t^*,0,y^*)\\ (t,x,y)\in \dom^\circ}}\int_{s=0}^t \int_{y_\circ\in \R}\sidekernel_{y_\circ}(s,x,y)g(s,y_\circ)ds\, dy_\circ = g(0,y^*) +\int_{s=0}^{t^*} \int_{y_\circ\in \R}\sidekernel_{y_\circ}(s,0,y^*)g(s,y_\circ)ds\, dy_\circ. \end{equation}
To isolate the effect of $\QCorrector_{\SP}$, let's introduce the truncation function
\begin{equation*} c_L(z)\Def \begin{cases} L &\text{if $z>L$} \\
z&\text{if $|z|\le L$} \\
-L &\text{if $z<-L$} \end{cases}
\end{equation*}
for each $L>0$.  Then
\begin{align*}
&\left|\int_{s=0}^t \int_{y_\circ\in \R}\sidekernel_{y_\circ}(s,x,y)\QCorrector_{0,y_\circ}(s,x,y)g(s,y_\circ)ds\, dy_\circ\right.\\
&\qquad \qquad \left.-\int_{s=0}^{t^*} \int_{y_\circ\in \R}\sidekernel_{y_\circ}(s,0,y^*)\QCorrector_{0,y_\circ}(s,0,y^*)g(s,y_\circ)ds\, dy_\circ \right|\\
&\qquad \le \left|\int_{s=0}^t \int_{y_\circ\in \R}\sidekernel_{y_\circ}(s,x,y)c_L\left(\QCorrector_{0,y_\circ}(s,x,y)\right)g(s,y_\circ)ds\, dy_\circ\right.\\
&\qquad \qquad \qquad \left.-\int_{s=0}^{t^*} \int_{y_\circ\in \R}\sidekernel_{y_\circ}(s,0,y^*)c_L\left(\QCorrector_{0,y_\circ}(s,0,y^*)\right)g(s,y_\circ)ds\, dy_\circ \right|\\
&\qquad \qquad + \left|\int_{s=0}^t \int_{y_\circ\in \R}\sidekernel_{y_\circ}(s,x,y)\lb \QCorrector_{0,y_\circ}(s,x,y)-c_L\left(\QCorrector_{0,y_\circ}(s,x,y)\right)\rb g(s,y_\circ)ds\, dy_\circ\right|\\
&\qquad \qquad + \left|\int_{s=0}^{t^*} \int_{y_\circ\in \R}\sidekernel_{y_\circ}(s,0,y^*)\lb \QCorrector_{0,y_\circ}(s,0,y^*)-c_L\left(\QCorrector_{0,y_\circ}(s,0,y^*)\right)\rb g(s,y_\circ)ds\, dy_\circ\right|.
\end{align*}

In light of \eqref{E:Qatzero}, Proposition \ref{P:jumpboundary} implies that 
\begin{align*}
    &\lim_{\substack{(t,x,y)\to (t^*,0,y^*)\\ (t,x,y)\in \dom^\circ}}\int_{s=0}^t \int_{y_\circ\in \R}\sidekernel_{y_\circ}(s,x,y)c_L\left(\QCorrector_{0,y_\circ}(s,x,y)\right)g(s,y_\circ)ds\, dy_\circ\\
&\qquad =\int_{s=0}^{t^*} \int_{y_\circ\in \R}\sidekernel_{y_\circ}(s,0,y^*)c_L\left(\QCorrector_{0,y_\circ}(s,0,y^*)\right)g(s,y_\circ)ds\, dy_\circ.
\end{align*}

For every $z\in \R$,
\begin{align*} \left|z-c_L(z)\right| = \left(|z|-L\right)\bOne_{\{|z|>L\}}
\le \left(|z|+L\right)\bOne_{\{|z|>L\}}
\le 2|z|\bOne_{\{|z|>L\}} < \frac{2|z|^2}{L}. \end{align*}
Thus
\begin{align*}
    &\sidekernel_{y_\circ}(s,x,y)\left| \QCorrector_{0,y_\circ}(s,x,y)-c_L\left(\QCorrector_{0,y_\circ}(s,x,y)\right)\right|
    \le \frac{2\left|\QCorrector_{0,y_\circ}(s,x,y)\right|^2}{L}\KK[E:KandhatKConst] \kernelbound{\sfrac{1}{6}}_{0,y_\circ}(t,x,y)\\
    &\qquad \le \frac{2\KK[E:KandhatKConst]}{L}\left(\KK[E:Qsize]s^{\sfrac12}\exp\left[\nu \xx^2_{\SP}(t,x)+\nu \yy^2_{\SP}(t,y)\right]\right)^2 \kernelbound{\sfrac{1}{6}}_{0,y_\circ}(t,x,y) \\
    &\qquad \le \frac{2\KK[E:KandhatKConst]\TT}{L}\KK[E:Qsize]^2\kernelbound{\sfrac{1}{12}}_{0,y_\circ}(t,x,y)
    \le \frac{2\KK[E:KandhatKConst]\TT}{L}\KK[E:Qsize]^2\KK[E:sideintegrablekernel]\sidekernelboundintegrable_{y_\circ}(t,x,y)
\end{align*}
for $(s,x,y)\in \dom$ and consequently
\begin{align*} &\left|\int_{s=0}^t \int_{y_\circ\in \R}\sidekernel_{y_\circ}(s,x,y)\lb \QCorrector_{0,y_\circ}(s,x,y)-c_L\left(\QCorrector_{0,y_\circ}(s,x,y)\right)\rb g(s,y_\circ)ds\, dy_\circ\right| \\
&\qquad \le \frac{2\KK[E:KandhatKConst]\TT}{L}\KK[E:Qsize]^2\KK[E:sideintegrablekernel]\|g\| \int_{s=0}^t \int_{y_\circ\in \R}\sidekernelboundintegrable_{y_\circ}(s,x,y)ds\, dy_\circ
\le \frac{2\KK[E:KandhatKConst]\TT}{L}\KK[E:Qsize]^2\KK[E:sideintegrablekernel]\KK[E:integrablesidekernelbound]\|g\|\end{align*}
for $(t,x,y)\in \dom$.

Collecting things together, we get
\begin{align*} &\varlimsup_{\substack{(t,x,y)\to (t^*,0,y^*)\\ (t,x,y)\in \dom^\circ}}\left|\int_{s=0}^t \int_{y_\circ\in \R}\sidekernel_{y_\circ}(s,x,y)\QCorrector_{0,y_\circ}(s,x,y)g(s,y_\circ)ds\, dy_\circ\right.\\
&\qquad \qquad \left. -\int_{s=0}^{t^*} \int_{y_\circ\in \R}\sidekernel_{y_\circ}(s,0,y^*)\QCorrector_{0,y_\circ}(s,0,y^*)g(s,y_\circ)ds\, dy_\circ \right|
\le \frac{4\KK[E:KandhatKConst]\TT}{L}\KK[E:Qsize]^2\KK[E:sideintegrablekernel]\KK[E:integrablesidekernelbound]\|g\|. \end{align*}
Let $L\nearrow 0$ to conclude that
\begin{align*} &\lim_{\substack{(t,x,y)\to (t^*,0,y^*)\\ (t,x,y)\in \dom^\circ}}\int_{s=0}^t \int_{y_\circ\in \R}\sidekernel_{y_\circ}(s,x,y)\QCorrector_{0,y_\circ}(s,x,y)g(s,y_\circ)ds\, dy_\circ\\
&\qquad =\int_{s=0}^{t^*} \int_{y_\circ\in \R}\sidekernel_{y_\circ}(s,0,y^*)\QCorrector_{0,y_\circ}(s,0,y^*)g(s,y_\circ)ds\, dy_\circ. \end{align*}
Combine this with \eqref{E:basicint} and the claim follows..
\end{proof}

Let's also prove some continuity results.
\begin{proposition}\label{P:integralcontinuity}
Fix $g\in C_b((0,\TT)\times (0,\TT)\times \Rplusint\times \R)$. Then the maps
\begin{align*} (t,t',x,y)&\mapsto \int_{(\SP)\in \bdy_i}\kernelQ_{\SP}(t,x,y)g(t,t',\SP)ds\, dx_\circ\, dy_\circ \\
(t,t',x,y)&\mapsto \int_{(\SP)\in \bdy_i}\genP \kernelQ_{\SP}(t,x,y)g(t,t',\SP)ds\, dx_\circ\, dy_\circ \end{align*}
are continuous on $(0,\TT)\times (0,\TT)\times \R\times \R$. \end{proposition}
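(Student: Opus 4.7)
The plan is to apply the dominated convergence theorem after performing the changes of variables from Remarks \ref{R:transformationone} and \ref{R:transformationtwo}, in the same spirit as Propositions \ref{P:dirac} and \ref{P:diracQ}, but now with the limit taken at an interior point of $(0,\TT)\times \Rplusint\times \R$ rather than on $\bdy_i$.

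First I would fix $(t_0,t'_0,x_0,y_0)\in (0,\TT)\times (0,\TT)\times \Rplusint\times \R$ and choose a small compact neighborhood $N$ on which $t$ is bounded away from $0$ and from $\TT$, and on which $x$ is bounded away from $0$. Splitting the $(x_\circ,y_\circ)$ integral via the indicators $\bOne_{\{|x-x_\circ|\ge 2\KK[E:boundedness]\TT\}}$ and $\bOne_{\{|x-x_\circ|< 2\KK[E:boundedness]\TT\}}$, I would apply the change of variables of Remark \ref{R:transformationone} to the first piece and that of Remark \ref{R:transformationtwo} to the second, reexpressing both as integrals over the fixed domain $\RR$ in $(u_\circ,v_\circ)$.

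Next, combining the first two bounds of Proposition \ref{P:CorrectorWhy} with Proposition \ref{P:integrablekernel}, I would dominate both $|\kernelQ_{\SP}(t,x,y)|$ and $|\genP\kernelQ_{\SP}(t,x,y)|$ by constant multiples of $\kernelboundintegrable_{\SP}(t,x,y)$. The inequalities displayed inside Remarks \ref{R:transformationone} and \ref{R:transformationtwo} then furnish integrable majorants of the transformed integrands of the respective form $\|g\|\TT^{3/2}(u_\circ^2+1)^{-1}\exp[-v_\circ^2/12]$ and $\|g\|\exp[-u_\circ^2/\KK[E:Kashmir]-v_\circ^2/12]$ (up to fixed constants), uniformly in $(t,t',x,y)\in N\times (0,\TT)$, and their integrability over $\RR$ is exactly \eqref{E:integrabilityone} and \eqref{E:integrabilitytwo}.

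For each fixed $(u_\circ,v_\circ)$ away from a negligible set, the transformed integrand is continuous in $(t,t',x,y)$: the quantities $\kernel_{\SP}$, $\QCorrector_{\SP}$, $\tgenP_{\SP}\QCorrector_{\SP}+\Xi_{\SP}$, the coefficients $\hypo$ and $b_1$, the map $\zeta_1$ and the Jacobian factors depend smoothly on their arguments, and $g$ is continuous by hypothesis. The only potential sources of discontinuity are the indicators $\bOne_{\{x+\zeta_1>0\}}$ and $\bOne_{\{|\zeta_1|<2\KK[E:boundedness]\TT\}}$; for the limiting point $(t_0,x_0,y_0)$, their discontinuity loci in $(u_\circ,v_\circ)$ are smooth curves and hence of $\R^2$-measure zero, so dominated convergence applies and yields continuity at $(t_0,t'_0,x_0,y_0)$.

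The main obstacle is the bookkeeping around those indicator boundaries, since the discontinuity locus drifts with $(t,x,y)$; this is handled by using smoothness of $\zeta_1$ together with the implicit function theorem to see that outside the measure-zero level set $\{|\zeta_1(t_0,x_0,y_0,\cdot,\cdot)|=2\KK[E:boundedness]\TT\}$ the indicator is locally constant in $(t,x,y)$, so the integrand converges pointwise a.e.\ along any sequence $(t_n,t'_n,x_n,y_n)\to (t_0,t'_0,x_0,y_0)$, which is all DCT requires.
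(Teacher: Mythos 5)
Your proposal matches the paper's proof essentially step for step: the same indicator split, the same changes of variables from Remarks \ref{R:transformationone} and \ref{R:transformationtwo}, domination of $\kernelQ$ and $\genP\kernelQ$ via Proposition \ref{P:CorrectorWhy} together with the integrable majorants \eqref{E:integrabilityone} and \eqref{E:integrabilitytwo}, and then dominated convergence from almost-everywhere continuity of the transformed integrand (the paper merely treats both kernels at once through a pair $(\beta_1,\beta_2)$ and asserts the a.e.\ continuity without your explicit measure-zero discussion of the indicator loci). One small remark: the statement claims continuity on all of $(0,\TT)\times(0,\TT)\times\R\times\R$, so restricting to a neighborhood with $x$ bounded away from $0$ is unnecessary---nothing in your argument uses that restriction, and the identical reasoning applies at any $x_0\in\R$.
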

\begin{proof}
Fix $(\beta_1,\beta_2)\in \{(1,0),(0,1)\}$ and define
\begin{equation*} k_{\SP}(t,x,y)\Def \beta_1\kernelQ_{\SP}(t,x,y)+\beta_2\left(\genP \kernelQ_{\SP}\right)(t,x,y) \end{equation*}
for $(\SP)\in \RR$ and $(t,x,y)\in \openstrip$.  Using Remarks \ref{R:transformationone} and \ref{R:transformationtwo},
\begin{equation}\label{E:immigrantsong}\begin{aligned}
&\int_{(\SP)\in \bdy_i}k_{\SP}(t,x,y)g(t,t'\SP) dx_\circ\, dy_\circ\\
&\qquad = \int_{(\SP)\in \bdy_i}\bOne_{\{|x-x_\circ|\ge 2\KK[E:boundedness]\TT\}}k_{\SP}(t,x,y)g(t,t',\SP) dx_\circ\, dy_\circ\\
&\qquad \qquad +  \int_{(\SP)\in \bdy_i}\bOne_{\{|x-x_\circ|< 2\KK[E:boundedness]\TT\}}k_{\SP}(t,x,y)g(t,t',\SP) dx_\circ\, dy_\circ\\
&\qquad = \int_{(u_\circ,v_\circ)\in \RR}\Phi^{(1)}_g(t,t',x,y,u_\circ,v_\circ)du_\circ\, dv_\circ + \int_{(u_\circ,v_\circ)\in \RR}\Phi^{(2)}_g(t,t',x,y,u_\circ,v_\circ)du_\circ\, dv_\circ
\end{aligned}\end{equation}
where
\begin{align*} \Phi^{(1)}_g(t,t',x,y,u_\circ,v_\circ)&=t^{1/2}k_{x-u_\circ,y-t^{1/2}v_\circ}(t,x,y)\bOne_{\{|u_\circ|\ge 2\KK[E:boundedness]\TT\}}g(t,t',x-u_\circ,y-t^{1/2}v_\circ)\\
\Phi^{(2)}_g(t,t',x,y,u_\circ,v_\circ)&=t^2\hypo\left(x,y-t^{1/2}v_\circ \right)k_{x+\zeta_1(t,x,y,u_\circ,v_\circ),y-t^{1/2}v_\circ}(t,x,y)\bOne_{\{|\zeta_1(t,x,y,u_\circ,v_\circ)|< 2\KK[E:boundedness]\TT\}}\\
&\qquad \times g(t,t',x+\zeta_1(t,x,y,u_\circ,v_\circ),y-t^{1/2}v_\circ).
\end{align*}
From Proposition \ref{P:CorrectorWhy},
\begin{align*}
\left|\Phi^{(1)}_g(t,t',x,y,u_\circ,v_\circ)\right|
    &\le \KK[E:CorrectorWhy]\KK[E:integrablekernel]\|g\|t^{1/2}\kernelboundintegrable_{x-u_\circ,y-t^{1/2}v_\circ}(t,x,y)\bOne_{\{|u_\circ|\ge 2\KK[E:boundedness]\TT\}}\\
\left|\Phi^{(2)}_g(t,t',x,y,u_\circ,v_\circ)\right|
&\le \KK[E:CorrectorWhy]\KK[E:integrablekernel]\|g\|t^2\hypo\left(x,y-t^{1/2}v_\circ \right)\kernelboundintegrable_{x+\zeta_1(t,x,y,u_\circ,v_\circ),y-t^{1/2}v_\circ}(t,x,y)\\
&\qquad \times \bOne_{\{|\zeta_1(t,x,y,u_\circ,v_\circ)|< 2\KK[E:boundedness]\TT\}}. \end{align*}
The bounds of Remarks \ref{R:transformationone} and \ref{R:transformationtwo} the allow us to use dominated convergence.  For almost every $(u_\circ,v_\circ)\in \RR$,  
\begin{equation*} (t,t',x,y)\mapsto \Phi^{(1)}_g(t,t',x,y,u_\circ,v_\circ)\qquad \text{and}\qquad 
(t,t',x,y)\mapsto \Phi^{(2)}_g(t,t',x,y,u_\circ,v_\circ) \end{equation*}
are continuous.  The claim follows by passing to the limit and reversing the representation of \eqref{E:immigrantsong}.
\end{proof}

\begin{corollary}\label{C:intintcontinuity}
Fix $g\in C_b(\domint)$.  Then the maps
\begin{align*} (t,x,y)&\mapsto \int_{s=0}^t\int_{(\SP)\in \bdy_i}\kernelQ_{\SP}(t-s,x,y)g(s,\SP)ds\, dx_\circ\, dy_\circ \\
(t,x,y)&\mapsto \int_{s=0}^t \int_{(\SP)\in \bdy_i}\genP \kernelQ_{\SP}(t-s,x,y)g(s,\SP)ds\, dx_\circ\, dy_\circ \end{align*}
are continous on $\dom$.
\end{corollary}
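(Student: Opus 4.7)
The plan is to reduce to Proposition \ref{P:integralcontinuity} via the substitution $r = t - s$, followed by a dominated convergence argument for the outer $r$-integral. Writing $k_{\SP}$ for either $\kernelQ_{\SP}$ or $\genP \kernelQ_{\SP}$, after substitution the object of interest becomes
\begin{equation*}
F(t,x,y) = \int_{r=0}^t \Phi(r, t, x, y)\, dr, \qquad \Phi(r, t, x, y) \Def \int_{(\SP) \in \bdy_i} k_{\SP}(r,x,y)\, g(t-r, \SP)\, dx_\circ\, dy_\circ.
\end{equation*}

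The first step is to verify continuity of $\Phi$ on the open set $\Omega \Def \{(r, t, x, y) : 0 < r < t < \TT,\ (x,y) \in \R^2\}$. I would retrace the proof of Proposition \ref{P:integralcontinuity}, replacing the function $g(t, t', x_\circ, y_\circ)$ in that argument by the function $\hat g(r, t, x_\circ, y_\circ) \Def g(t-r, x_\circ, y_\circ)$, which is bounded by $\|g\|$ and jointly continuous on the region where $t-r \in (0, \TT)$. The decompositions of Remarks \ref{R:transformationone} and \ref{R:transformationtwo} and the dominating bounds (via Proposition \ref{P:CorrectorWhy} and the integrability established in Remarks \ref{R:transformationone}, \ref{R:transformationtwo}) carry over without change, yielding continuity of $\Phi$ at every point of $\Omega$. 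Simultaneously, Theorem \ref{T:collected} supplies the uniform bound $|\Phi(r, t, x, y)| \le \|g\| \max(\KK[E:AAA], \KK[E:BBB])$ on $\Omega$.

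To conclude continuity of $F$ on $\dom$, I would fix $(t^*, x^*, y^*) \in \dom$ and a sequence $(t_n, x_n, y_n) \to (t^*, x^*, y^*)$ in $\dom$. In the case $t^* > 0$ I rewrite $F(t_n, x_n, y_n) = \int_0^\TT \bOne_{\{0 < r < t_n\}} \Phi(r, t_n, x_n, y_n)\, dr$; for each $r \in (0, t^*) \setminus \{t^*\}$ the indicator converges, the integrand converges pointwise by the continuity of $\Phi$ on $\Omega$, and the uniform bound permits dominated convergence, giving $F(t_n, x_n, y_n) \to F(t^*, x^*, y^*)$. In the case $t^* = 0$ the integral $F(0, x^*, y^*)$ is empty and hence zero, while $|F(t_n, x_n, y_n)| \le t_n \|g\| \max(\KK[E:AAA], \KK[E:BBB]) \to 0$.

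The main technical obstacle is the first step: establishing continuity of $\Phi$ up to but excluding the diagonal $r = t$, where $g(t-r, \SP)$ loses meaning and the Dirac behavior of $\kernelQ$ from Proposition \ref{P:diracQ} intervenes. Crucially, one need not extend $g$ continuously to $s = 0$; the dominated convergence argument is local in $(r, t)$ with $r$ bounded away from $t$, and the contribution of $r$ near $t$ is absorbed by the uniform $L^1$ bound of Theorem \ref{T:collected} (and becomes negligibly small as $(t-r) \to 0$ on any finite $dr$-interval). This local-in-$\Omega$ continuity, combined with the global integrability bound, is precisely what dominated convergence in the outer integral requires.
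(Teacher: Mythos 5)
Your proposal is correct and follows essentially the same route as the paper: substitute $r=t-s$, view the result as an outer integral of the inner spatial integral, bound the inner integral uniformly by $\|g\|\max\{\KK[E:AAA],\KK[E:BBB]\}$ via Theorem \ref{T:collected}, invoke (the argument of) Proposition \ref{P:integralcontinuity} for continuity of the inner integral at almost every $r$, and finish with dominated convergence in $r$. The only cosmetic difference is that the paper feeds the composed function $\tilde g(t,s,\SP)=g(\max\{t-s,0\},x_\circ,y_\circ)$ directly into Proposition \ref{P:integralcontinuity}, while you retrace its proof on the open region $0<r<t<\TT$, which handles the same boundary issue at $r=t$ slightly more carefully.
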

\begin{proof}
As in the proof of Proposition \ref{P:integralcontinuity}, fix  $(\beta_1,\beta_2)\in \{(1,0),(0,1)\}$ and define
\begin{equation*} k_{\SP}(t,x,y)\Def \beta_1\kernelQ_{\SP}(t,x,y)+\beta_2\left(\genP \kernelQ_{\SP}\right)(t,x,y) \end{equation*}
for $(\SP)\in \RR$ and $(t,x,y)\in \openstrip$.
Then
\begin{align*} 
&\int_{s=0}^t\int_{(\SP)\in \bdy_i}k_{\SP}(t-s,x,y)g(s,\SP)ds\, dx_\circ\, dy_\circ \\
&\qquad = \int_{s=0}^t\int_{(\SP)\in \bdy_i}k_{\SP}(s,x,y)g(t-s,\SP)ds\, dx_\circ\, dy_\circ
=\int_{s=0}^{\TT} K_s(t,x,y) ds 
\end{align*}
where
\begin{equation*}K_s(t,x,y)
\Def \bOne_{\{0<s\le t\}}\int_{(\SP)\in \bdy_i}k_{\SP}(s,x,y)\tilde g(t,s,\SP) dx_\circ\, dy_\circ \end{equation*}
where in turn $\tilde g(t,s,\SP)\Def g\left(\max\{t-s,0\},x_\circ,y_\circ\right)$.
For every $s\in (0,\TT)$,
\begin{equation*}\left|K_s(t,x,y)\right|\le \KK[E:CorrectorWhy]\|g\|\max\{\KK[E:AAA],\KK[E:BBB]\}. \end{equation*}
so we can use dominated convergence.
By Proposition \ref{P:integralcontinuity}, $(t,x,y)\mapsto K_s(t,x,y)$
is continuous for almost every $s\in (0,\TT)$.  The claim follows.
\end{proof}

We also have
\begin{proposition}\label{P:sidecontinuity}
Fix $g\in C_b(\bdy_s)$.  Then the map
\begin{equation*} (t,y)\mapsto \int_{s=0}^t \int_{y_\circ\in \R}\sidekernelQ_{y_\circ}(t-s,0,y)g(s,y_\circ)ds\, dy_\circ \end{equation*}
is continuous on $\bdy_s$.
\end{proposition}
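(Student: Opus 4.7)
The plan is to follow the same template used in Proposition \ref{P:integralcontinuity} and Corollary \ref{C:intintcontinuity}, but adapted to the boundary kernel evaluated at $x = 0$. First I would reverse time by setting $r = t - s$, turning the integral into
\[ F(t,y) \Def \int_{r=0}^t \int_{y_\circ\in\R} \sidekernelQ_{y_\circ}(r,0,y)\, g(t-r,y_\circ)\, dy_\circ\, dr, \]
and then extend the $r$-integration to all of $(0,\TT)$ by inserting an indicator $\bOne_{\{0<r<t\}}$; this decouples the integration domain from the continuity variable $t$.

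Next I would apply the change of variables from Remark \ref{R:transformationthree}, writing $v_\circ = (y - y_\circ)/\sqrt{r}$, so that $y_\circ = y - \sqrt{r}\, v_\circ$ and $dy_\circ = \sqrt{r}\, dv_\circ$. The integrand becomes
\[ \Phi_g(t,y,r,v_\circ) \Def \bOne_{\{0 < r < t\}}\, \sqrt{r}\, \sidekernelQ_{y - \sqrt{r} v_\circ}(r,0,y)\, g\bigl(t-r,\, y - \sqrt{r}\, v_\circ\bigr). \]
The bound established inside the proof of Theorem \ref{T:collected}, namely
\[ \bigl|\sidekernelQ_{y_\circ}(r,0,y)\bigr| \le \KK[E:CorrectorWhy]\KK[E:sideintegrablekernel]\, r^{-1/2} \exp\!\left[-\tfrac{(y - y_\circ)^2}{12\, r}\right], \]
absorbs cleanly under the new variables to give
\[ \bigl|\Phi_g(t,y,r,v_\circ)\bigr| \le \KK[E:CorrectorWhy]\KK[E:sideintegrablekernel]\, \|g\|\, \exp\!\left[-\tfrac{v_\circ^2}{12}\right], \]
which is integrable over $(r, v_\circ) \in (0,\TT)\times \R$ by \eqref{E:integrabilitythree}.

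Finally I would invoke the dominated convergence theorem to pass limits inside the $(r,v_\circ)$ integral. Fix a target point $(t^*, y^*) \in \bdy_s$. For Lebesgue-almost every $(r,v_\circ)$ — specifically, for $r \neq t^*$ — the map $(t,y) \mapsto \Phi_g(t,y,r,v_\circ)$ is continuous at $(t^*, y^*)$: the indicator $\bOne_{\{0<r<t\}}$ is continuous in $t$ away from $r = t^*$; the kernel $\sidekernelQ_{y - \sqrt{r} v_\circ}(r,0,y)$ is smooth in $y$ by direct inspection of \eqref{E:sidekernelDef}, \eqref{E:zwoundsprime}, and \eqref{E:siderkernelQDef}; and $g \in C_b(\bdy_s)$ supplies continuity in both arguments of $g(t-r,\, y - \sqrt{r}\, v_\circ)$ provided $t - r > 0$, which is exactly where the indicator is nonzero. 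Reversing the substitution after taking the limit recovers $F(t^*,y^*)$, yielding the claimed continuity. The one delicate point is that $r \mapsto \sidekernelQ_{y_\circ}(r,0,y)$ is not integrable near $r = 0$ on its own; the whole argument hinges on the factor $\sqrt{r}$ produced by the Jacobian of the change of variables, which cancels the $r^{-1/2}$ singularity and leaves a genuine Gaussian majorant independent of $(t,y)$.
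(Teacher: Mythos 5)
Your proof is correct and follows essentially the same route as the paper's: the substitution of Remark \ref{R:transformationthree} with the indicator $\bOne_{\{0<r<t\}}$, the Gaussian majorant $\exp[-v_\circ^2/12]$ obtained from the bound on $\sidekernelQ_{y_\circ}(r,0,y)$ (as in the proof of \eqref{E:EEE}), dominated convergence via \eqref{E:integrabilitythree}, and almost-everywhere continuity of the transformed integrand. If anything, you are slightly more explicit than the paper about the Jacobian factor $r^{1/2}$ cancelling the $r^{-1/2}$ singularity, which is exactly the right point to emphasize.
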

\begin{proof}
We use Remark \ref{R:transformationthree}.  Fix $(t^*,y^*)\in \bdy_s$.  We write
\begin{equation}\label{E:theocean}
    \int_{s=0}^t \int_{y_\circ\in \R}\sidekernelQ_{y_\circ}(t-s,0,y)g(s,y_\circ)ds\, dy_\circ
    = \int_{r=0}^{\TT} \int_{v_\circ\in \R}\Phi_g(t,r,y,v_\circ)ds\, dy_\circ
\end{equation}
where
\begin{equation*} \Phi_g(t,r,y,v_\circ)
= \sidekernelQ_{y-r^{1/2}v_\circ}(r,0,y)\bOne_{\{0<r<t\}}g(t-r,y-r^{1/2}v_\circ). \end{equation*}
Then
\begin{equation*} \left|\Phi_g(t,r,y,v_\circ)\right|
\le \KK[E:CorrectorWhy]\KK[E:sideintegrablekernel]\|g\|r^{1/2}\sidekernelboundintegrable_{y-r^{1/2}v_\circ}(r,0,y)\bOne_{\{0<r<t\}}. \end{equation*}
By \eqref{E:integrabilityfour}, we can use dominated convergence. 
For almost every $r\in (0,\TT)$ and $v_\circ \in \R$, $(t,y)\mapsto \Phi_g(t,r,y,v_\circ)$ is continuous.  The result follows by taking limits and reversing \eqref{E:theocean}.
\end{proof}

Finally
\begin{proposition}\label{P:sidebigcontinuity} Fix $g\in C_b(\bdy_s)$.  The maps
    \begin{align*}
    (t,x,y)&\mapsto \int_{s=0}^t \int_{y_\circ\in \R}\sidekernelQ_{y_\circ}(t-s,x,y)g(s,y_\circ)ds\, dy_\circ \\
(t,x,y)&\mapsto \int_{s=0}^t \int_{y_\circ\in \R}\genP \sidekernelQ_{y_\circ}(t-s,x,y)g(s,y_\circ)ds\, dy_\circ \end{align*}
are continuous on $\dom$.
\end{proposition}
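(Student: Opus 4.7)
The plan is to follow the template of Corollary~\ref{C:intintcontinuity} and Proposition~\ref{P:sidecontinuity}: after the substitution $r = t-s$, the two convolution integrals fall under a common framework, and the result is obtained via dominated convergence in $r$. Define, for $(\beta_1,\beta_2) \in \{(1,0),(0,1)\}$,
\[
k_{y_\circ}(r,x,y) \Def \beta_1\, \sidekernelQ_{y_\circ}(r,x,y) + \beta_2 \left(\genP\sidekernelQ_{y_\circ}\right)(r,x,y),
\]
so that both maps take the form $U(t,x,y) = \int_{r=0}^{\TT} K_r(t,x,y)\, dr$ with
\[
K_r(t,x,y) \Def \bOne_{\{0<r<t\}} \int_{y_\circ \in \R} k_{y_\circ}(r,x,y)\, g(t-r, y_\circ)\, dy_\circ.
\]

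The first step is pointwise continuity of $(t,x,y) \mapsto K_r(t,x,y)$ for a.e.\ $r \in (0,\TT)$. For $r > 0$ fixed and $t \ne r$, the indicator is locally constant, and both $(x,y) \mapsto \sidekernelQ_{y_\circ}(r,x,y)$ and $(x,y) \mapsto \left(\genP\sidekernelQ_{y_\circ}\right)(r,x,y)$ are smooth by the explicit formulas of~\eqref{E:kernelDef}, \eqref{E:kernelQDef}, and~\eqref{E:siderkernelQDef}. Combining Proposition~\ref{P:CorrectorWhy} with Proposition~\ref{P:sideintegrablekernel} gives a Gaussian tail bound in $y_\circ$, and a routine dominated-convergence argument in $y_\circ$, using the continuity of $g$, then yields the continuity of the inner integral in $(t,x,y)$.

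The second step is to produce an $r$-integrable dominating function. Combining Propositions~\ref{P:CorrectorWhy} and~\ref{P:sideintegrablekernel},
\[
|K_r(t,x,y)| \le \KK[E:CorrectorWhy]\KK[E:sideintegrablekernel]\|g\| \int_{y_\circ\in \R} \sidekernelboundintegrable_{y_\circ}(r,x,y)\, dy_\circ,
\]
and the double integral of the right-hand side over $(r,y_\circ) \in (0,\TT) \times \R$ is bounded uniformly in $(t,x,y)$ by $\KK[E:integrablesidekernelbound]$ by Proposition~\ref{P:integrablesidekernelbound}. Splitting the $r$-axis into the three sub-regimes appearing in~\eqref{E:sidekernelboundintegrableDef} and applying the substitutions of Remarks~\ref{R:transformationthree} and~\ref{R:transformationfour} case by case allows one, along any converging sequence $(t_n,x_n,y_n) \to (t^*,x^*,y^*)$ in $\dom$, to extract a common $L^1(dr)$ dominator for the family $\{K_r(t_n, x_n, y_n)\}_n$. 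Dominated convergence then gives $U(t_n, x_n, y_n) \to U(t^*, x^*, y^*)$, which is the claimed continuity.

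The main technical obstacle is assembling the $r$-integrable dominator uniformly along a sequence approaching the side boundary $\{x=0\}$. In the intermediate regime $r \in (\sfrac{x}{2\KK[E:boundedness]}, \sfrac{2x}{\ub})$ of~\eqref{E:sidekernelboundintegrableDef}, the pointwise bound on $\sidekernelboundintegrable$ carries a prefactor of $x^{-2}$ that is only rendered integrable after the change of variables in Remark~\ref{R:transformationfour}; controlling this contribution as $x_n \to 0$ requires working in the substituted coordinates $(r, v_\circ)$ and transporting the continuity statement back through the Jacobian. No new estimate beyond the integrability statements of Theorem~\ref{T:collected} is required, but the bookkeeping at the side boundary is delicate and must interface cleanly with the boundary-jump asymptotics of Proposition~\ref{P:jumpboundaryQ} in applications.
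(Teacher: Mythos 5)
Your reduction and your Step 1 are fine, and your scheme does prove continuity at interior limit points: if $x^*>0$ then eventually $x_n\ge x^*/2$, the middle regime of \eqref{E:sidekernelboundintegrableDef} is bounded by a constant depending only on $x^*$, the outer regimes integrate in $y_\circ$ to $\sqrt{12\pi}$, and a constant is then an admissible $L^1(dr)$ dominator. The gap is exactly where you flag it, and it is not bookkeeping: along a sequence with $x_n\searrow 0$ the ``common $L^1(dr)$ dominator for $\{K_r(t_n,x_n,y_n)\}_{n}$'' that your dominated-convergence-in-$r$ argument requires does not exist. Integrating the middle-regime term of \eqref{E:sidekernelboundintegrableDef} over $y_\circ$ leaves a function of $r$ which is a bump of height of order $x_n^{-3/2}$, supported on an interval of length of order $x_n^{3/2}$ around $r\approx x_n/|b_1(0,y^*)|$, hence of total mass of order one; this concentration at $r=0$ is precisely the mechanism behind the boundary behavior in Proposition \ref{P:jumpboundaryQ}. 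Taking, say, $x_n=2^{-n}$, these bumps are eventually pairwise disjoint, so any $h\in L^1((0,\TT))$ dominating every $K_r(t_n,x_n,y_n)$ would satisfy $\int_0^{\TT}h(r)\,dr=\infty$. No majorant in the single variable $r$ can therefore work near the side boundary.

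The proof in the paper avoids this by never integrating out $y_\circ$ first: it splits the $(r,y_\circ)$-integral according to the regimes of \eqref{E:sidekernelboundintegrableDef}, changes variables $(r,y_\circ)\mapsto(r,v_\circ)$ as in Remarks \ref{R:transformationthree} and \ref{R:transformationfour}, and bounds the two resulting integrands, uniformly in $(t,x,y)$, by the fixed integrable functions $\exp\left[-\tfrac{1}{12}v_\circ^2\right]$ and $\exp\left[-\tfrac{\ub^3}{96}r^2-\tfrac{\ub}{24}v_\circ^2\right]$ (up to the constants of Proposition \ref{P:CorrectorWhy}); dominated convergence is then applied jointly in $(r,v_\circ)$, and the change of variables is reversed only after passing to the limit. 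Your closing remark about ``working in the substituted coordinates $(r,v_\circ)$ and transporting the continuity statement back through the Jacobian'' is, in substance, this argument, but it cannot be grafted onto the structure you announce ($U=\int_0^{\TT}K_r\,dr$ plus an $r$-dominator): near $x=0$ you must abandon the intermediate object $K_r$ altogether and run the dominated-convergence argument in the joint variables from the start, as the paper does.
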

\begin{proof}
Fix $(\bnorm_1,\bnorm_2)\in \{(0,1),(1,0\}$ and define
\begin{equation*} k_{y_\circ}(t,x,y)\Def \beta_1\sidekernelQ_{\SP}(t,x,y)+\beta_2\left(\genP \sidekernelQ_{\SP}\right)(t,x,y) \end{equation*}
for $y_\circ\in \R$ and $(t,x,y)\in \domplus$.
Then
\begin{equation} \label{E:abbeyroad} \begin{aligned}
    &\int_{s=0}^t \int_{y_\circ\in \R}k_{y_\circ}(t-s,x,y)g(s,y_\circ)ds\, dy_\circ\\
    &\qquad = \int_{r=0}^t \int_{y_\circ\in \R} \lb \bOne_{\{r\le  \sfrac{x}{2\KK[E:boundedness]}\}}
+\bOne_{\{r\ge  \sfrac{2x}{\ub}\}}\rb k_{y_\circ}(r,x,y)g(t-r,y_\circ)dr\, dy_\circ\\
    &\qquad \qquad + \int_{r=0}^t \int_{y_\circ\in \R}\bOne_{\{\sfrac{x}{2\KK[E:boundedness]}< r< \sfrac{2x}{\ub}\}}k_{y_\circ}(r,x,y)g(t-r,y_\circ)ds\, dy_\circ\\
&\qquad = \int_{r\in\R} \int_{v_\circ\in \R}\Phi^{(1)}_g(t,x,y,r,v_\circ)dr\,dv_\circ + \int_{r\in\R} \int_{v_\circ\in \R}\Phi^{(2)}_g(t,x,y,r,v_\circ)dr\,dv_\circ \end{aligned}\end{equation}
where
\begin{align*}
\Phi^{(1)}_g(t,x,y,r,v_\circ)&= k_{y-r^{1/2}v_\circ}(r,x,y)\lb \bOne_{\{r\le  \sfrac{x}{2\KK[E:boundedness]}\}}
+\bOne_{\{r\ge  \sfrac{2x}{\ub}\wedge t\}}\rb g(t-r,y-x^{1/2}v_\circ)\\
\Phi^{(2)}_g(t,x,y,r,v_\circ)&=\frac{\hypo\left(0,y-x^{1/2}v_\circ\right)}{\left|b_1\left(0,y-x^{1/2}v_\circ\right)\right|}x^2 k_{y-x^{1/2}v_\circ}(\zeta_2(x,y,r,v_\circ),y-x^{1/2}v_\circ)\\
   &\qquad \times \bOne_{\{\sfrac{x}{2\KK[E:boundedness]}< \zeta_2(x,y,r,v_\circ)< \sfrac{2x}{\ub}\wedge t\}}g(\zeta_2(x,y,r,v_\circ),y-x^{1/2}v_\circ).
\end{align*}
We have that
\begin{align*}
\left|\Phi^{(1)}_g(t,x,y,r,v_\circ)\right|&\le \KK[E:CorrectorWhy]\|g\| \sidekernelboundintegrable_{y-r^{1/2}v_\circ}(r,x,y)\lb \bOne_{\{r\le  \sfrac{x}{2\KK[E:boundedness]}\}}
+\bOne_{\{r\ge  \sfrac{2x}{\ub}\}}\rb \\
\left|\Phi^{(2)}_g(t,x,y,r,v_\circ)\right|&\le \KK[E:CorrectorWhy]\|g\|\frac{\hypo\left(0,y-x^{1/2}v_\circ\right)}{\left|b_1\left(0,y-x^{1/2}v_\circ\right)\right|}x^2 k_{y-r^{1/2}v_\circ}(\zeta_2(x,y,r,v_\circ),x,y)\\
&\qquad \times \bOne_{\{\sfrac{x}{2\KK[E:boundedness]}< \zeta_2(x,y,r,v_\circ)< \sfrac{2x}{\ub}\wedge \TT\}}.
\end{align*}
By Remarks \ref{R:transformationthree} and \ref{R:transformationfour}, we can then used dominated convergence.  For almost every $r\in \R$ and $v_\circ\in \R$, the maps
\begin{equation*} (t,x,y)\mapsto \Phi^{(1)}_g(t,x,y,r,v_\circ) \qquad \text{and}\qquad 
(t,x,y)\mapsto \Phi^{(2)}_g(t,x,y,r,v_\circ) \end{equation*}
are continuous in $\dom$.  The claim follows by taking limits and reversing \eqref{E:abbeyroad}.
\end{proof}

\section{Volterra Equation}\label{S:Volterra}
Let's finally solve \eqref{E:MainPDE}.  We will use the functional space $\Cspace$ as the space of inputs to a variation of parameters formula.  If $u$ is the solution of \eqref{E:MainPDE}, we want to find $(\psi,\psi^\bdy)\in \Cspace$ such that
\begin{equation} \label{E:VariationOfConstants} \begin{aligned} u(t,x,y)&=\int_{s=0}^t \int_{(\SP)\in \bdy_i}\kernelQ_{\SP}(t-s,x,y)\psi(s,x_\circ,y_\circ)dx_\circ\, dy_\circ\, ds \\
&\qquad + \int_{s=0}^t \int_{y_\circ\in \R}\sidekernelQ_{y_\circ}(t-s,x,y)\psi^\bdy (s,y_\circ)ds\, dy_\circ 
+ \int_{(x,y)\in \bdy_i}\kernelQ_{\SP}(t,x,y)u_\circ(x_\circ,y_\circ)dx_\circ\, dy_\circ \end{aligned} \end{equation}
for all $(t,x,y)\in \dom^\circ$, where $\kernelQ$ is as in \eqref{E:kernelQDef} and $\sidekernelQ$ is as in  
 \eqref{E:siderkernelQDef}.  See \cite{hwang2019vlasov} for an alternate functional-space treatment of a related problem.  Analogous boundary-domain integral equations in appropriate functional spaces have also been used to study stationary problems \cite{fresneda2020existence,fresneda2020new,fresneda2022boundary}.

We want to set up a Volterra calculation to reverse-engineer $\psi$ and $\psi^\bdy$. 
If \eqref{E:VariationOfConstants} holds, Propositions \ref{P:diracQ} and \ref{P:jumpboundaryQ} should imply that
\begin{align*} -f(t,x,y)&= \left(\genP u\right)(t,x,y)\\
&= -\psi(t,x,y)+\int_{s=0}^t \int_{(\SP)\in \bdy_i}\genP\kernelQ_{\SP}(t-s,x,y)\psi(s,x_\circ,y_\circ)dx_\circ\, dy_\circ\, ds \\
&\qquad + \int_{s=0}^t \int_{y_\circ\in \R}\genP\sidekernelQ_{y_\circ}(t-s,x,y)\psi^\bdy(s,y_\circ)ds\, dy_\circ \\
&\qquad + \int_{(\SP)\in \bdy_i} \genP\kernelQ_{\SP}(t,x,y)u_\circ(x_\circ,y_\circ)dx_\circ\, dy_\circ \qquad (t,x,y)\in \dom^\circ\\
u_\bdy(t,y)&= \psi^\bdy(t,y)+\int_{s=0}^t \int_{(\SP)\in \bdy_i}\kernelQ_{\SP}(t-s,0,y)\psi(s,x_\circ,y_\circ)dx_\circ\, dy_\circ\, ds \\
&\qquad + \int_{s=0}^t \int_{y_\circ\in \R}\sidekernelQ_{y_\circ}(t-s,0,y)\psi^\bdy(s,y_\circ)ds\, dy_\circ\\
&\qquad + \int_{(\SP)\in \bdy_i}\kernelQ_{\SP}(t,0,y)u_\circ(x_\circ,y_\circ)dx_\circ\, dy_\circ.
\qquad (t,y)\in \bdy_s \end{align*}
Rearranging, $(\psi,\psi^\bdy)$ should satisfy the Volterra integral equation
\begin{equation} \label{E:Volterra} \begin{aligned} \psi(t,x,y)&=f(t,x,y)+\int_{s=0}^t \int_{(\SP)\in \bdy_i} \genP\kernelQ_{\SP}(t-s,x,y)\psi(s,x_\circ,y_\circ)dx_\circ\, dy_\circ\, ds \\
&\qquad + \int_{s=0}^t \int_{y_\circ\in \R}\genP\sidekernelQ_{y_\circ}(t-s,x,y)\psi^\bdy(s,y_\circ)ds\, dy_\circ \\
&\qquad + \int_{(\SP)\in \bdy_i} \genP\kernelQ_{\SP}(t,x,y)u_\circ(x_\circ,y_\circ)dx_\circ\, dy_\circ \qquad (t,x,y)\in \dom^\circ\\
\psi^\bdy(t,y)&=u_\bdy(t,y)-\int_{s=0}^t \int_{(\SP)\in \bdy_i}\kernelQ_{\SP}(t-s,0,y)\psi(s,x_\circ,y_\circ)dx_\circ\, dy_\circ\, ds \\
&\qquad - \int_{s=0}^t \int_{y_\circ\in \R}\sidekernelQ_{y_\circ}(t-s,0,y)\psi^\bdy(s,y_\circ)ds\, dy_\circ\\
&\qquad - \int_{(x,y)\in \bdy_i}\kernelQ_{\SP}(t,0,y)u_\circ(x_\circ,y_\circ)dx_\circ\, dy_\circ.
\qquad (t,y)\in \bdy_s\end{aligned}\end{equation}
This suggests an iteration in $\Cspace$ defined by starting at $\psi_0 \equiv 0$ on $\dom^\circ$ and $\psi_0^\bdy\equiv 0$ on $\bdy_s$
and then recursively defining
\begin{align*} \psi_{n+1}(t,x,y)&=f(t,x,y)+\int_{s=0}^t \int_{(\SP)\in \bdy_i}\genP\kernelQ_{\SP}(t-s,x,y)\psi_n(s,x_\circ,y_\circ)dx_\circ\, dy_\circ\, ds \\
&\qquad + \int_{s=0}^t \int_{y_\circ\in \R}\genP\sidekernelQ_{y_\circ}(t-s,x,y)\psi_n^\bdy (s,y_\circ)ds\, dy_\circ \\
&\qquad + \int_{(\SP)\in \bdy_i} \genP\kernelQ_{\SP}(t,x,y)u_\circ(x_\circ,y_\circ)dx_\circ\, dy_\circ \qquad (t,x,y)\in \dom^\circ \\
\psi_{n+1}^\bdy(t,y)&=u_\bdy(t,y)-\int_{s=0}^t \int_{(\SP)\in \bdy_i}\kernelQ_{\SP}(t-s,0,y)\psi_n(s,x_\circ,y_\circ)dx_\circ\, dy_\circ\, ds \\
&\qquad - \int_{s=0}^t \int_{y_\circ\in \R}\sidekernelQ_{y_\circ}(t-s,0,y)\psi_n^\bdy(s,y_\circ)ds\, dy_\circ\\
&\qquad - \int_{(x,y)\in \bdy_i}\kernelQ_{\SP}(t,0,y)u_\circ(x_\circ,y_\circ)dx_\circ\, dy_\circ \qquad (t,y)\in \bdy_s
\end{align*}
for $n\in \{0,1\dots\}$.
We note in particular that
\begin{align*} \psi_1(t,x,y)&=f(t,x,y) + \int_{(\SP)\in \bdy_i}\genP\kernelQ_{\SP}(t,x,y)u_\circ(x_\circ,y_\circ)dx_\circ\, dy_\circ \qquad (t,x,y)\in \dom^\circ\\
\psi_1^\bdy(t,y)&=u_\bdy(t,y) - \int_{(x,y)\in \bdy_i}\kernelQ_{\SP}(t,0,y)u_\circ(x_\circ,y_\circ)dx_\circ\, dy_\circ. \qquad (t,y)\in \bdy_s
\end{align*}
To show convergence of $\{(\psi_n,\psi_n^\bdy)\}_n$,
let's consider differences
\begin{align*} \tilde \psi_n(t,x,y)&\Def \psi_{n+1}(t,x,y)-\psi_n(t,x,y) \qquad (t,x,y)\in \dom^\circ\\
\tilde \psi_n^\bdy(t,y)&\Def \psi_{n+1}^\bdy(t,y)-\psi_n^\bdy(t,y)\qquad (t,y)\in \bdy_s \end{align*}
for $n\in \{0,1,2,\dots\}$.
Explicitly, let's \emph{define}
\begin{equation} \label{E:explicitpsi}\begin{aligned} \tilde \psi_0(t,x,y)&= f(t,x,y) + \int_{(\SP)\in \bdy_i}\genP\kernelQ_{\SP}(t,x,y)u_\circ(x_\circ 0,y_\circ)dx_\circ\, dy_\circ \qquad (t,x,y)\in \domint \\
\tilde \psi_0^\bdy(t,y)&= u_\bdy(t,y)-\int_{(x,y)\in \bdy_i}\kernelQ_{\SP}(t,0,y)u_\circ(x_\circ,y_\circ)dx_\circ\, dy_\circ \qquad (t,y)\in \bdy_s\\
\tilde \psi_{n+1}(t,x,y)&=\int_{s=0}^t \int_{(\SP)\in \bdy_i}\genP\kernelQ_{\SP}(t-s,x,y)\tilde \psi_n(s,x_\circ,y_\circ)dx_\circ\, dy_\circ\, ds \\
&\qquad + \int_{s=0}^t \int_{y_\circ\in \R}\genP\sidekernelQ_{y_\circ}(t-s,x,y)\tilde \psi_n^\bdy(s,y_\circ)dy_\circ\, ds \qquad (t,x,y)\in \domint \\
\tilde \psi_{n+1}^\bdy(t,y)&=-\int_{s=0}^t \int_{(\SP)\in \bdy_i}\kernelQ_{\SP}(t-s,0,y)\tilde \psi_n(s,x_\circ,y_\circ)dx_\circ\, dy_\circ\, ds \\
&\qquad - \int_{s=0}^t \int_{y_\circ\in \R}\sidekernelQ_{y_\circ}(t-s,0,y)\tilde \psi_n^\bdy(s,y_\circ)dy_\circ\, ds \qquad (t,y)\in \bdy_s
\end{aligned}\end{equation}
for $n\in \N$.

We would like to prove estimates on $\{(\tilde \psi_n,\tilde \psi_n^\bdy)\}_{n\in \N}$ leading to convergence.  Let's define constants
\begin{align}\label{E:SummK1}\KK[E:SummK1] &\Def \max\lb \|f\|
+ \KK[E:BBB]\|u_\circ\|,\|u_\bdy\|+ \KK[E:AAA]\|u_\circ\|\rb \\
\label{E:SummK2} \KK[E:SummK2] &\Def 
\max\lb \KK[E:BBB]\TT +\KK[E:DDD],\KK[E:AAA]+\KK[E:EEE],\KK[E:BBB] +\KK[E:DDD]\rb.\end{align}
\begin{lemma}\label{L:VolterraBounds} For $n\in \{0,1\dots\}$
\begin{equation} \label{E:iterativeclaim}\begin{gathered}
\left|\tilde \psi_{2n}(t,x,y)\right|\le
\frac{\KK[E:SummK1]\KK[E:SummK2]^{2n}t^n}{n!} \qquad \text{and}\qquad 
\left|\tilde \psi_{2n+1}(t,x,y)\right|\le
\frac{\KK[E:SummK1]\KK[E:SummK2]^{2n+1}t^n}{n!} \qquad \text{for all $(t,x,y)\in \domint$} \\
\left|\tilde \psi_{2n}^\bdy(t,y)\right|\le
\frac{\KK[E:SummK1]\KK[E:SummK2]^{2n}t^n}{n!} \qquad \text{and}\qquad 
\left|\tilde \psi_{2n+1}^\bdy(t,y)\right|\le
\frac{\KK[E:SummK1]\KK[E:SummK2]^{2n+1}t^{n+1}}{(n+1)!} \qquad \text{for all $(t,y)\in \bdy_s$}
\end{gathered}\end{equation}
\end{lemma}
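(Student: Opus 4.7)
The proof will proceed by strong induction on $n\in\N$, using the recursive definition \eqref{E:explicitpsi} of the differences $\tilde\psi_n$ and $\tilde\psi_n^\bdy$ together with the integrability bounds collected in Theorem \ref{T:collected}. The two base cases (for $2n=0$ and $2n+1=1$) follow directly from the explicit formulas for $\tilde\psi_0$, $\tilde\psi_0^\bdy$, $\tilde\psi_1$, and $\tilde\psi_1^\bdy$: one bounds the unforced terms by $\|f\|$, $\|u_\bdy\|$, and the convolutions against $u_\circ$ by $\KK[E:AAA]\|u_\circ\|$ or $\KK[E:BBB]\|u_\circ\|$, so the bound $\KK[E:SummK1]$ (matching the $n=0$ case where $t^0/0!=1$) is immediate from \eqref{E:SummK1}.

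For the inductive step, I would handle the two parity transitions separately, since the symmetry of the claim is broken by the $t^{n+1}/(n+1)!$ factor that appears in the odd-index boundary bound. Setting $C_0 \Def \KK[E:SummK1]\KK[E:SummK2]^{2n}$ and $C_1 \Def \KK[E:SummK1]\KK[E:SummK2]^{2n+1}$, for the step $2n\to 2n+1$ I bound the interior recursion by
\begin{equation*}
    \bigl|\tilde\psi_{2n+1}(t,x,y)\bigr|\le \int_0^t \KK[E:BBB]\,\frac{C_0 s^n}{n!}\,ds + \KK[E:DDD]\,\sup_{s\le t}\frac{C_0 s^n}{n!}\le \frac{C_0 t^n}{n!}\bigl[\KK[E:BBB]\TT+\KK[E:DDD]\bigr],
\end{equation*}
using \eqref{E:BBB} on the domain kernel and \eqref{E:DDD} (which bounds the full double integral in $s$ and $y_\circ$) on the side kernel; the first entry of the maximum defining $\KK[E:SummK2]$ absorbs the bracket. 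The analogous boundary estimate uses \eqref{E:AAA} and the pointwise-in-$s$ bound \eqref{E:EEE}: both pieces of $\tilde\psi_{2n+1}^\bdy$ pick up a factor of $t$ from the $ds$-integration of a quantity of size $s^n/n!$, giving $C_0[\KK[E:AAA]+\KK[E:EEE]]t^{n+1}/(n+1)!$, which is absorbed by the second entry of $\KK[E:SummK2]$.

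For the step $2n+1\to 2n+2$, the crucial input is that $\tilde\psi_{2n+1}^\bdy$ carries the extra factor of $t$. For the interior, both the $\genP\kernelQ$ term ($\int_0^t$ of $s^n/n!$) and the $\genP\sidekernelQ$ term ($\sup_{s\le t}$ of $s^{n+1}/(n+1)!$ times $\KK[E:DDD]$) give $t^{n+1}/(n+1)!$, so the sum is dominated by $C_1[\KK[E:BBB]+\KK[E:DDD]]t^{n+1}/(n+1)!$, handled by the third entry of $\KK[E:SummK2]$. For the boundary, the $\kernelQ$ piece contributes $C_1\KK[E:AAA]t^{n+1}/(n+1)!$ and the $\sidekernelQ$ piece, with an extra $ds$-integration of $s^{n+1}$, contributes $C_1\KK[E:EEE]t^{n+2}/(n+2)!=C_1\KK[E:EEE]\cdot [t/(n+2)]\cdot t^{n+1}/(n+1)!$; since $t/(n+2)\le \TT$, the bracket $[\KK[E:AAA]+\KK[E:EEE]\TT]$ is again controlled by the second entry of $\KK[E:SummK2]$ (reading it as an upper bound on $\KK[E:AAA]+\KK[E:EEE]\TT$).

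The main obstacle is purely bookkeeping: one must track four different statements (interior/boundary $\times$ even/odd) simultaneously, because the boundary bound at odd indices is one "phase" ahead of the interior. This mismatch is forced by the fact that $\genP\sidekernelQ$ is integrable in $(s,y_\circ)$ jointly (giving the uniform $\KK[E:DDD]$ bound with no extra $t$) whereas $\sidekernelQ(t-s,0,y)$ is only integrable in $y_\circ$ for each fixed $s$ (giving $\KK[E:EEE]$, so that the $ds$-integration supplies an extra factor of $t$). Identifying which of the three entries of $\KK[E:SummK2]$ to invoke at each of the four sub-steps completes the induction and yields \eqref{E:iterativeclaim}.
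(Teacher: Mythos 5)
Your proof is essentially the paper's own argument: the same induction on $n$ with the same four-fold (interior/boundary, even/odd) bookkeeping, the same base case via \eqref{E:SummK1}, and the same use of the bounds \eqref{E:AAA}, \eqref{E:BBB}, \eqref{E:DDD}, \eqref{E:EEE} (with $\KK[E:DDD]$ absorbing the joint $(s,y_\circ)$-integral of $\genP\sidekernelQ$ and $\KK[E:EEE]$ supplying the pointwise-in-$s$ bound whose $ds$-integration produces the extra factor of $t$), with the three entries of \eqref{E:SummK2} absorbing the brackets at the three distinct sub-steps. The only wrinkle is the $2n+1\to 2n+2$ boundary step, where your (correctly computed) term $\KK[E:EEE]\,t^{n+2}/(n+2)!$ forces you to read the second entry of \eqref{E:SummK2} as dominating $\KK[E:AAA]+\KK[E:EEE]\TT$, which it does not literally do for large $\TT$; but the paper's own derivation is loose at exactly this spot (it silently bounds $s^{n+1}/(n+1)!$ by $\sup_{s'\le t}(s')^{n}/(n+1)!$), so this is an inherited constant-bookkeeping issue, fixable by enlarging that entry by a factor $\max\{1,\TT\}$, rather than a defect of your approach.
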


\begin{remark}
The bounds on $\kernelQ$ and $\genP\kernelQ$ of Theorem \ref{T:collected} allow the parts of \eqref{E:explicitpsi} containing these terms to be treated in standard ways in recursively bounding $\tilde \psi_n$ and $\tilde \psi^\bdy_n$.  The bound on $\genP \sidekernelQ$ of Theorem \ref{T:collected} reflects the fact that $\genP \sidekernelQ$ was constructed in Section \ref{S:Corrections} to have similar asymptotics to $\sidekernelQ$; i.e., $\genP \sidekernelQ$ is at worst a Dirac function near the side boundary.  This means that the integral against $\genP\sidekernelQ$ in the recursion for $\tilde \psi_n$ of \eqref{E:explicitpsi} can only be bounded by the maximum of its integrand.  On the other hand, the integral against $\sidekernelQ$ in the recursion for $\tilde \psi^\bdy_n$ of \eqref{E:explicitpsi} is an integral along the side boundary \textup{(}where $x=0$\textup{)}, and $\sidekernelQ$ at the side boundary is bounded \textup{(}as in \eqref{E:EEE}\textup{)}.  
We can get familiar estimates \textup{(}involving powers of time divided by factorials\textup{)} as in \eqref{E:iterativeclaim} by working through this integral against $\sidekernelQ$ along the side boundary.  The periodic nature of \eqref{E:iterativeclaim} comes from this fact; standard integral estimates in the pair of terms $(\tilde \psi_n,\tilde \psi^\bdy_n)$ are available in only one of the pair of equations.
\end{remark}
\begin{proof}[Proof of Lemma \ref{L:VolterraBounds}]
For $(t,x,y)\in \domint$,
\begin{equation*}|\tilde \psi_0(t,x,y)|
\le \|f\|+\lb \int_{(\SP)\in \bdy_i}\left|\genP \kernelQ_{\SP}(t,x,y)\right|dx_\circ\, dy_\circ\rb  \|u_\circ\| \le \lb \|f\|
+ \KK[E:BBB]\|u_\circ\|\rb \end{equation*}
and for $(t,y)\in \bdy_s$,
\begin{equation*}
|\tilde \psi_0^\bdy(t,y)|
\le
\|u_\bdy\|+ \lb \int_{(\SP)\in \bdy_i}\left|\kernelQ_{\SP}(t,x,y)\right|dx_\circ\, dy_\circ\rb \|u_\circ\| \le \|u_\bdy\|+ \KK[E:AAA]\|u_\circ\|.
\end{equation*}
This implies \eqref{E:iterativeclaim} for $2n=0$.

Assume next that the bounds of \eqref{E:iterativeclaim} hold for some $2n$.  For $(t,x,y)\in \domint$,
\begin{align*}
\left|\tilde \psi_{2n+1}(t,x,y)\right|
&\le \int_{s=0}^t \lb \int_{(\SP)\in \bdy_i}\left|\genP \kernelQ_{\SP}(t-s,x,y)\right|dx_\circ\, dy_\circ\rb \frac{\KK[E:SummK1]\KK[E:SummK2]^{2n} s^n}{n!}  ds \\
&\qquad +\int_{s=0}^t\int_{y_\circ\in \R}\left|\genP \sidekernelQ_{y_\circ}(t-s,x,y)\right| \frac{\KK[E:SummK1]\KK[E:SummK2]^{2n}  s^n}{n!} dy_\circ\, ds\\
&\le \frac{\KK[E:SummK1]\KK[E:SummK2]^{2n}}{n!} \lb \KK[E:BBB]\int_{s=0}^t \lb \sup_{0\le s'\le t}(s')^n\rb ds+\KK[E:DDD]\sup_{0\le s\le t}s^n\rb  \\
&= \frac{\KK[E:SummK1]\KK[E:SummK2]^{2n}}{n!}\lb \KK[E:BBB]\TT t^n+\KK[E:DDD]t^n\rb  
= \frac{\KK[E:SummK1]\KK[E:SummK2]^{2n}}{n!}\lb \KK[E:BBB]\TT +\KK[E:DDD]\rb t^n.
\end{align*}
For $(t,y)\in \bdy_s$,
\begin{align*}
\left|\tilde \psi_{2n+1}^\bdy(t,y)\right|
&\le \int_{s=0}^t \lb \int_{(\SP)\in \bdy_i}\left|\kernelQ_{\SP}(t-s,0,y_\circ)\right|dx_\circ\, dy_\circ\rb \frac{\KK[E:SummK1]\KK[E:SummK2]^{2n} s^n}{n!} ds \\
&\qquad + \int_{s=0}^t   \lb \int_{y_\circ\in \R}\left|\sidekernelQ_{y_\circ}(t-s,0,y) \right|dy_\circ\rb \frac{\KK[E:SummK1]\KK[E:SummK2]^{2n} s^n}{n!} \,ds\\
&\le \frac{\KK[E:SummK1]\KK[E:SummK2]^{2n}}{n!}\lb \KK[E:AAA]\int_{s=0}^t s^n ds + \KK[E:EEE]\int_{s=0}^t s^n ds\rb 
= \frac{\KK[E:SummK1]\KK[E:SummK2]^{2n}}{n!}\lb \KK[E:AAA]  + \KK[E:EEE]\rb \frac{t^{n+1}}{n+1}. 
\end{align*}
This gives us \eqref{E:iterativeclaim} for $2n+1$.

Assume next that the bounds of \eqref{E:iterativeclaim} hold for some $2n+1$.
For $(t,x,y)\in \domint$,
\begin{align*}
\left|\tilde \psi_{2n+2}(t,x,y)\right|
&\le \int_{s=0}^t \lb \int_{(\SP)\in \bdy_i}\left|\genP \kernelQ_{\SP}(t-s,x,y)\right|dx_\circ\, dy_\circ\rb \frac{\KK[E:SummK1]\KK[E:SummK2]^{2n+1} s^n}{n!}  ds \\
&\qquad +\int_{s=0}^t\int_{y_\circ\in \R}\left|\genP \sidekernelQ_{y_\circ}(t-s,x,y)\right| \frac{\KK[E:SummK1]\KK[E:SummK2]^{2n+1}  s^{n+1}}{(n+1)!} dy_\circ\, ds\\
&\le \KK[E:SummK1]\KK[E:SummK2]^{2n+1} \lb \frac{\KK[E:BBB]}{n!}\int_{s=0}^t s^n  ds+\frac{\KK[E:DDD]}{(n+1)!}\sup_{0\le s\le t}s^{n+1}\rb  \\
&= \KK[E:SummK1]\KK[E:SummK2]^{2n+1}\lb \frac{\KK[E:BBB]t^{n+1}}{(n+1)!}+\frac{\KK[E:DDD]t^{n+1}}{(n+1)!}\rb 
= \frac{\KK[E:SummK1]\KK[E:SummK2]^{2n+1}}{(n+1)!}\lb \KK[E:BBB] +\KK[E:DDD]\rb t^{n+1}.
\end{align*}
For $(t,y)\in \bdy_s$,
\begin{align*}
\left|\tilde \psi_{2n+2}^\bdy(t,y)\right|
&\le \int_{s=0}^t \lb \int_{(\SP)\in \bdy_i}\left|\kernelQ_{\SP}(t-s,0,y_\circ)\right|dx_\circ\, dy_\circ\rb \frac{\KK[E:SummK1]\KK[E:SummK2]^{2n+1} s^n}{n!} ds \\
&\qquad + \int_{s=0}^t  \lb \int_{y_\circ\in \R}\left|\sidekernelQ_{y_\circ}(t-s,0,y)\right|dy_\circ\rb \frac{\KK[E:SummK1]\KK[E:SummK2]^{2n+1} s^{n+1}}{(n+1)!} ds\\
&\le \KK[E:SummK1]\KK[E:SummK2]^{2n+1}\lb \frac{\KK[E:AAA]}{n!}\int_{s=0}^t s^n ds + \frac{\KK[E:EEE]}{(n+1)!}\int_{s=0}^t \lb \sup_{0\le s'\le t}(s')^n\rb  ds\rb \\
&= \KK[E:SummK1]\KK[E:SummK2]^{2n+1}\lb \frac{\KK[E:AAA]t^{n+1}}{(n+1)!}  + \frac{\KK[E:CCC]t^{n+1}}{(n+1)!}\rb 
=\frac{\KK[E:SummK1]\KK[E:SummK2]^{2n+1}}{(n+1)!}\lb \KK[E:AAA]+\KK[E:EEE] \rb t^{n+1}.
\end{align*}
This gives us \eqref{E:iterativeclaim} for $2n+2$.
\end{proof}

Noting that 
\begin{equation*} \sum_{n=0}^\infty \frac{\KK[E:SummK2]^{2n}\TT^n}{n!}<\infty, \end{equation*}
we have, keeping in mind the continuity results of Propositions \ref{P:integralcontinuity}, \ref{P:sidecontinuity}, \ref{P:sidebigcontinuity}, and Corollary \ref{C:intintcontinuity}, that
\begin{align*} \psi(t,x,y)&\Def \sum_{n=0}^\infty \tilde \psi_n(t,x,y) \qquad (t,x,y)\in \domint \\
\psi^\bdy(t,y)&\Def \sum_{n=0}^\infty \tilde \psi_n^\bdy(t,y) \qquad (t,y)\in \bdy_s\end{align*}
exist in the topology of uniform convergence on compacts.  The pair $(\psi,\psi)$ then satisfies \eqref{E:Volterra}.  We finally define $u$ by \eqref{E:VariationOfConstants}.

We want to show that $u$ satisfies the desired PDE.
\begin{theorem}\label{T:Main} The function $u$ is the solution to
\begin{equation} \label{E:MainViscosity} \begin{aligned}\left(\genP u\right)(t,x,y)+f(t,x,y)&=0 \qquad (t,x,y)\in \dom^\circ \\
u(t,0,y)&=u_\bdy(t,y)\qquad (t,y)\in \bdy_s \\
u(0,x,y)&=u_\circ(x,y).\qquad  (x,y)\in \bdy_i \end{aligned}\end{equation}
\end{theorem}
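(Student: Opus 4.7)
The plan is to verify the three parts of \eqref{E:MainViscosity} in turn, using the Dirac-type limits of Propositions \ref{P:diracQ} and \ref{P:jumpboundaryQ}, the continuity results of Propositions \ref{P:integralcontinuity}, \ref{P:sidecontinuity}, \ref{P:sidebigcontinuity} and Corollary \ref{C:intintcontinuity}, and crucially the Volterra identity \eqref{E:Volterra} satisfied by $(\psi,\psi^\bdy)$. The iterated bounds of Lemma \ref{L:VolterraBounds} guarantee that $\psi$ and $\psi^\bdy$ are uniformly bounded on $\dom$ (indeed continuous by the cited continuity propositions), which provides the needed control on the integrands.

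For the initial condition, as $(t,x,y)\to(0,x^*,y^*)\in\bdy_i$ within $\domint$, the two convolution terms in \eqref{E:VariationOfConstants} are $O(t)$ by Theorem \ref{T:collected}, while the initial-data term converges to $u_\circ(x^*,y^*)$ by Proposition \ref{P:diracQ} applied with $g=u_\circ$. For the side boundary at $(t^*,y^*)\in\bdy_s$, the initial-data term and the $\kernelQ$-convolution remain continuous at $x=0$ by Corollary \ref{C:intintcontinuity}, and Proposition \ref{P:jumpboundaryQ} shows the single-layer convolution against $\sidekernelQ$ picks up a jump equal to $\psi^\bdy(t^*,y^*)$ as $x\searrow 0$. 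Substituting into the second equation of \eqref{E:Volterra} collapses $u(t^*,0,y^*)$ to $u_\bdy(t^*,y^*)$, exactly as desired.

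The hard part is to verify $\genP u=-f$ on $\domint$. The approach is to apply $\genP$ to each of the three terms of \eqref{E:VariationOfConstants}. The initial-data term is a non-convolutional integral over $\bdy_i$, and $\genP$ moves inside by dominated convergence, with Proposition \ref{P:CorrectorWhy} and Propositions \ref{P:integrablekernel}, \ref{P:integrablekernelbound} providing the integrable majorant for $\genP\kernelQ$. For the two convolutional terms I would truncate the $s$-integral at $t-\eps$ (so that the integrand is smooth and uniformly bounded), apply $\genP$ classically to produce Leibniz boundary contributions of the form $K(\eps,x,y)\varphi(t-\eps)$ together with the shifted integrals $\int_0^{t-\eps}\genP K(t-s,x,y)\varphi(s)\,ds$, and then send $\eps\searrow 0$. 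In the limit, the Leibniz term from the $\kernelQ$-convolution contributes $-\psi(t,x,y)$ by Proposition \ref{P:diracQ} (the minus sign comes from the $-\partial/\partial t$ in $\genP$), while the analogous term from the $\sidekernelQ$-convolution vanishes since $x>0$ keeps us off the side boundary where the jump in Proposition \ref{P:jumpboundaryQ} is concentrated. Collecting the pieces and invoking the first line of \eqref{E:Volterra} yields
\begin{equation*}
(\genP u)(t,x,y)=-\psi(t,x,y)+\bigl(\psi(t,x,y)-f(t,x,y)\bigr)=-f(t,x,y).
\end{equation*}

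The chief technical obstacle is the uniform-in-$\eps$ justification for passing $\eps\searrow 0$ under $\genP$, especially for the second $y$-derivative. The bounds \eqref{E:BBB} and \eqref{E:DDD} are uniform in $(t,x,y)$, and the densities $\psi,\psi^\bdy$ are continuous on compact time slabs by the uniform convergence of the Volterra iteration; together these make the dominated-convergence step routine but bookkeeping-heavy. The somewhat delicate point is that the Dirac concentrations of $\kernelQ$ and $\sidekernelQ$ as $\eps\searrow 0$ must be identified with integrands depending simultaneously on $\eps$ and the spatial point, which is why Propositions \ref{P:diracQ} and \ref{P:jumpboundaryQ} were phrased for a general continuous $g$.
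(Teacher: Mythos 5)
Your overall computation follows the same route as the paper: the paper also regularizes the $\kernelQ$-convolution (by replacing $\kernelQ_{\SP}(t-s,x,y)$ with $\kernelQ_{\SP}(t-s+\eps,x,y)$ rather than truncating the $s$-integral at $t-\eps$), obtains the Dirac/Leibniz contribution $-\int_{\bdy_i}\kernelQ_{\SP}(\eps,x,y)\psi(t,x_\circ,y_\circ)\,dx_\circ\,dy_\circ$, substitutes the first line of \eqref{E:Volterra}, and sends $\eps\searrow 0$ using Proposition \ref{P:diracQ} and Corollary \ref{C:intintcontinuity}; the side and initial data are treated exactly as you propose, via Propositions \ref{P:jumpboundaryQ} and \ref{P:diracQ} together with the second line of \eqref{E:Volterra}.

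The genuine gap is your concluding step, where you assert $(\genP u)(t,x,y)=-f(t,x,y)$ classically. What your argument actually delivers is that the smooth regularizations $u_\eps$ converge to $u$ and that $\genP u_\eps$ converges to $-f$; this does not give $\genP u=-f$ pointwise unless you additionally show that $u$ itself is twice differentiable in $y$ and once in $t$ and $x$, i.e.\ that the \emph{individual} derivatives of the convolution terms converge. The available estimates (Theorem \ref{T:collected}, \eqref{E:BBB}, \eqref{E:DDD}) control only the combination $\genP\kernelQ$ and $\genP\sidekernelQ$, not $\sfrac{\partial^2}{\partial y^2}\kernelQ$ integrated against a density $\psi$ that is known only to be bounded and continuous; in classical parametrix theory this step requires H\"older (Dini) continuity of $\psi$, which has not been established here. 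The paper closes exactly this gap by a different final move: each $u_\eps$ is smooth and satisfies $\genP u_\eps=\Phi_\eps$ classically, hence in the viscosity sense; passing to the locally uniform limit shows $u$ is a viscosity solution of \eqref{E:MainViscosity}, and uniqueness of viscosity solutions \cite{jensen1988uniqueness} (the classical solution being also a viscosity solution) identifies $u$ with the solution. You need either this viscosity/uniqueness argument or extra regularity of $\psi$. A secondary point: your claim that the Leibniz term from the truncated $\sidekernelQ$-convolution vanishes because $x>0$ requires justification, since the majorant $\sidekernelboundintegrable_{y_\circ}(\eps,x,y)$ in the regime $\eps\le \sfrac{x}{2\KK[E:boundedness]}$ integrates in $y_\circ$ to a constant independent of $\eps$; you must retain the exponential decay in $x$ at small times (or avoid truncating that term at all, which is what the paper does, since for $x>0$ it is not singular at $s=t$).
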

\begin{proof} 
Let's remove the singularity in $\kernel$; for $\eps>0$, define
\begin{align*} u_\eps(t,x,y)&=\int_{s\in (0,t]} \int_{(\SP)\in \bdy_i}\kernelQ_{\SP}(t-s+\eps,x,y)\psi(s,x_\circ,y_\circ)dx_\circ\, dy_\circ\\
&\qquad + \int_{s=0}^t \int_{y_\circ\in \R}\sidekernelQ_{y_\circ}(t-s,x,y)\psi^\bdy(s,y_\circ)ds\, dy_\circ\, ds + \int_{(x,y)\in \bdy_i}\kernelQ_{\SP}(t,x,y)u_\circ(x_\circ,y_\circ)dx_\circ\, dy_\circ \end{align*}
which is in $C^\infty$ on $\dom^\circ$.
Using Proposition \ref{P:diracQ}, we then have that
\begin{equation}\label{E:visdom} \left(\genP u_\eps\right)(t,x,y)= \Phi_\eps(t,x,y)
\end{equation}
for $(t,x,y)\in \dom^\circ$, where
\begin{align*}
&\Phi_\eps(t,x,y)\\
&\qquad = -\int_{(\SP)\in \bdy_i}\kernelQ_{\SP}(\eps,x,y)\psi(t,x_\circ,y_\circ)dx_\circ\, dy_\circ \\
&\qquad\qquad + \int_{s=0}^t \int_{(\SP)\in \bdy_i}\genP \kernelQ_{\SP}(t-s+\eps,x,y)\psi(s,x_\circ,y_\circ)dx_\circ\, dy_\circ\, ds \\
&\qquad\qquad + \int_{s=0}^t \int_{y_\circ\in \R}\genP \sidekernelQ_{y_\circ}(t-s,x,y)\psi^\bdy(s,y_\circ)ds\, dy_\circ\, ds 
+ \int_{(x,y)\in \bdy_i}\genP \kernelQ_{\SP}(t,x,y)u_\circ(x_\circ,y_\circ)dx_\circ\, dy_\circ \\
&\qquad= -\int_{(\SP)\in \bdy_i}\kernelQ_{\SP}(\eps,x,y)f(t,x_\circ,y_\circ)dx_\circ\, dy_\circ \\
&\qquad\qquad - \int_{(\SP)\in \RR}\kernelQ_{\SP}(\eps,x,y)\int_{s=0}^t \int_{(x'_\circ,y'_\circ)\in \bdy_i}\genP\kernelQ_{x'_\circ,y'_\circ}(t-s,x_\circ,y_\circ)\psi(s,x'_\circ,y'_\circ)dx'_\circ\, dy'_\circ\, ds\, dx_\circ\, dy_\circ\\
&\qquad\qquad - \int_{(\SP)\in \bdy_i}\kernelQ_{\SP}(\eps,x,y)\int_{s=0}^t \int_{y'_\circ\in \R} \genP\sidekernelQ_{y'_\circ}(t-s,x_\circ,y_\circ)\psi^\bdy(s,y_\circ)ds\, dy'_\circ\, dx_\circ\, dy_\circ\\
&\qquad\qquad - \int_{(\SP)\in \bdy_i}\kernelQ_{\SP}(\eps,x,y)\int_{(x'_\circ,y'_\circ)\in \bdy_i} \genP\kernelQ_{x'_\circ,y'_\circ}(t,x_\circ,y_\circ)u_\circ(x'_\circ,y'_\circ)dx'_\circ\, dy'_\circ\, dx_\circ\, dy_\circ\\
&\qquad\qquad + \int_{s=0}^t \int_{(\SP)\in \bdy_i}\genP \kernelQ_{\SP}(t-s+\eps,x,y)\psi(s,x_\circ,y_\circ)dx_\circ\, dy_\circ\, ds \\
&\qquad\qquad + \int_{s=0}^t \int_{(\SP)\in \bdy_i}\genP \sidekernelQ_{\SP}(t-s,x,y)\psi^\bdy(s,x_\circ,y_\circ)dx_\circ\, dy_\circ\, ds \\
&\qquad\qquad + \int_{(x,y)\in \bdy_i}\genP \kernelQ_{\SP}(t,x,y)u_\circ(x_\circ,y_\circ)dx_\circ\, dy_\circ. 
\end{align*}
By Proposition \ref{P:diracQ},
\begin{align*}&\lim_{\eps\to 0}\int_{(\SP)\in \bdy_i}\kernelQ_{\SP}(\eps,x,y)f(t,x_\circ,y_\circ)dx_\circ\, dy_\circ=f(t,x,y) \\
&\lim_{\eps\to 0}\int_{(\SP)\in \bdy_i}\kernelQ_{\SP}(\eps,x,y)_{s=0}^t \int_{(x'_\circ,y'_\circ)\in \bdy_i} \genP\kernelQ_{x'_\circ,y'_\circ}(t-s,x_\circ,y_\circ)\psi(s,x'_\circ,y'_\circ)dx'_\circ\, dy'_\circ\, ds\, dx_\circ\, dy_\circ\\
&\qquad = \int_{s=0}^t \int_{(x'_\circ,y'_\circ)\in \bdy_i}\genP\kernelQ_{x'_\circ,y'_\circ}(t-s,x,y)\psi(s,x'_\circ,y'_\circ)dx'_\circ\, dy'_\circ\, ds\\
&\lim_{\eps\to 0}\int_{(\SP)\in \bdy_i}\kernelQ_{\SP}(\eps,x,y)\int_{s=0}^t \int_{y'_\circ\in \R}\genP\sidekernelQ_{y'_\circ}(t-s,x_\circ,y_\circ)\psi^\bdy(s,y_\circ)ds\, dy'_\circ\, dx_\circ\, dy_\circ\\
&\qquad = \int_{s=0}^t \int_{y'_\circ\in \R}\genP_{0,y'_\circ}\sidekernelQ_{y'_\circ}(t-s,x,y)\psi^\bdy(s,y_\circ)ds\, dy'_\circ\\
&\lim_{\eps\to 0}\int_{(\SP)\in \bdy_i}\kernelQ_{\SP}(\eps,x,y)\int_{(x'_\circ,y'_\circ)\in \bdy_i}\genP\kernelQ_{x'_\circ,y'_\circ}(t,x_\circ,y_\circ)u_\circ(x'_\circ,y'_\circ)dx'_\circ\, dy'_\circ\, dx_\circ\, dy_\circ\\
&\qquad = \int_{(x'_\circ,y'_\circ)\in \bdy_i}\genP\kernelQ_{x'_\circ,y'_\circ}(t,x,y)u_\circ(x'_\circ,y'_\circ)dx'_\circ\, dy'_\circ
\end{align*}
for $(t,x,y)\in \dom^\circ$.  Corollary \ref{C:intintcontinuity} implies that
\begin{align*} &\lim_{\eps\searrow 0}\int_{s=0}^t \int_{(\SP)\in \bdy_i}\genP \kernelQ_{\SP}(t-s+\eps,x,y)\psi(s,x_\circ,y_\circ)dx_\circ\, dy_\circ\, ds\\
&\qquad =\lim_{\eps\searrow 0}\int_{s=0}^t \int_{(\SP)\in \bdy_i}\genP \kernelQ_{\SP}(t-s,x,y)\psi(s,x_\circ,y_\circ)dx_\circ\, dy_\circ\, ds.\end{align*}
Consequently $\lim_{\eps \to 0}\Phi_\eps(t,x,y)=-f(t,x,y)$
for all $(t,x,y)\in \domint$.

For each $t>0$ and $y \in \R$ we also have that $\lim_{x\searrow 0} u^\eps(t,x,y)=\Phi^\bdy(t,y)$,
where
\begin{align*} \Phi^\bdy(t,y)
&= \int_{s=0}^t \int_{(\SP)\in \bdy_i}\kernelQ_{\SP}(t-s+\eps,0,y)\psi(s,x_\circ,y_\circ)dx_\circ\, dy_\circ \, ds\\
&\qquad + \psi^\bdy(t,y)+\int_{s=0}^t \int_{y_\circ\in \R}\sidekernelQ_{y_\circ}(t-s,0,y)\psi^\bdy(s,y_\circ) dy_\circ\, ds \\
&\qquad + \int_{(\SP)\in \bdy_i}\kernelQ_{\SP}(t,0,y)u_\circ(x_\circ,y_\circ)dx_\circ\, dy_\circ \\
&= u_\bdy(t,y)  + \int_{s=0}^t \int_{(\SP)\in \bdy_i}\kernelQ_{\SP}(t-s+\eps,0,y)\psi(s,x_\circ,y_\circ)dx_\circ\, dy_\circ \, ds\\
&\qquad -\int_{s=0}^t \int_{(\SP)\in \bdy_i}\kernelQ_{\SP}(t-s,0,y)\psi(s,x_\circ,y_\circ)dx_\circ\, dy_\circ \, ds\end{align*}
for $(t,y)\in \bdy_s$.  From Proposition \ref{P:diracQ}, we have that 
$\lim_{\eps \to 0}\Phi^\bdy(t,y)=u_\bdy(t,y)$.

We also have that 
\begin{equation*} u_\eps(0,x,y)=\int_{(\SP)\in \RR}\kernelQ_{\SP}(\eps,x,y)u_\circ(\SP)dx_\circ\, dy_\circ \end{equation*}
and Proposition \ref{P:diracQ} implies that $\lim_{\eps \to 0}u_\eps(0,x,y)=u_\circ(x,y)$ for all $(x,y)\in \Rplus\times \R$.

Since $u_\eps$ satisfies \eqref{E:visdom} in the classical sense, it also satisfies \eqref{E:visdom} in the viscosity sense.
Taking limits, we see that $u$ satisfies \eqref{E:MainViscosity} in a viscosity sense.  Since the classical solution is also a viscosity solution, uniqueness of viscosity solutions \cite{jensen1988uniqueness} completes the proof.
\end{proof}

\iftoggle{ARXIV}{
    \bibliographystyle{amsalpha}
    \newcommand{\etalchar}[1]{$^{#1}$}
\providecommand{\bysame}{\leavevmode\hbox to3em{\hrulefill}\thinspace}
\providecommand{\MR}{\relax\ifhmode\unskip\space\fi MR }
\providecommand{\MRhref}[2]{%
  \href{http://www.ams.org/mathscinet-getitem?mr=#1}{#2}
}
\providecommand{\href}[2]{#2}

}{
    \printbibliography 
}

\end{document}